\definecolor{processblue}{cmyk}{0.96,0,0,0}
\newcommand{\QQ}{\mathbb{Q}}
\newcommand{\ZZ}{\mathbb{Z}}
\newcommand{\NN}{\mathbb{N}}
\newcommand{\pf}{\mathfrak{p}}
\newcommand{\qf}{\mathfrak{q}}
\newcommand{\mf}{\mathfrak{m}}
\newcommand{\nf}{\mathfrak{n}}
\newcommand{\Xf}{\mathfrak{X}}
\newcommand{\Yf}{\mathfrak{Y}}
\newcommand{\Zf}{\mathfrak{Z}}
\newcommand{\oh}{\mathcal{O}}
\newcommand{\spec}{\text{Spec}\,}
\newcommand{\spf}{\text{Spf}\,}
\newcommand{\spa}{\text{Spa}\,}
\newcommand{\tr}{\text{tr}}
\newcommand{\dimn}{\text{dim}\,}
\DeclareMathOperator{\Hom}{Hom}
\theoremstyle{plain}
\newtheorem{thm}{Theorem}[section]
\newtheorem{lem}[thm]{Lemma}
\newtheorem{prop}[thm]{Proposition}
\newtheorem{cor}[thm]{Corollary}
\newtheorem{introthm}{Theorem}
\newtheorem{introprop}[introthm]{Proposition}
\newtheorem{introcor}[introthm]{Corollary}
\theoremstyle{definition}
\newtheorem{defn}[thm]{Definition}
\newtheorem{exmp}[thm]{Example}
\newtheorem{rem}[thm]{Remark}
\title{The Riemannian Hebbarkeitssätze for pseudorigid spaces}
\author{João N. P. Lourenço}
\begin{document}
\newpage
\baselineskip=15pt
\begin{abstract}
We prove Riemann's theorems on extensions of functions over certain mixed characteristic analytic adic spaces, first introduced by Johansson and Newton. We use these results to reprove a theorem of de Jong identifying global sections of an $\oh_K$-flat normal formal scheme, locally formally of finite type over $\oh_K$, with locally powerbounded sections over the generic fibre.
\end{abstract}
\maketitle

\tableofcontents

\section{Introduction}

In complex analysis, the Riemannian Hebbarkeitssätze\footnote{In the English literature, these results usually go by the name of Riemann's removable singularities lemma and Hartogs' theorem, whereas in the German literature these are called \textit{Erster Riemmanscher Hebbarkeitssatz} and \textit{Zweiter Riemannscher Hebbarkeitssatz}, respectively. We will adopt the latter terminology, not simply for its greater homogeneity, but also because the ordinals serve as a mnemonic for the lower bounds on the codimension of the appearing singularity loci.} concern the possibility of globally extending certain meromorphic functions on a complex manifold $X$ with singularities along an analytic subset $Z$. Namely, the first (resp. second) Hebbarkeitssatz states that a locally bounded (resp. holomorphic) function $f$ on $X\setminus Z$ has a unique extension to $X$, if $Z$ has codimension at least $1$ (resp. at least $2$). The analogous statements hold true for normal rigid spaces over a complete nonarchimedean field, by the work of Bartenwerfer (cf. \cite{Bar} for a proof of the first Hebbarkeitssatz) and Lütkebohmert (cf. \cite[Satz 1.6]{Luet} for a proof of the second Hebbarkeitssatz and the first Hebbarkeitssatz in the case of absolute normality).

If one works instead with adic spaces, one is then able to produce interesting examples of mixed characteristic analytic adic spaces which do not live over any field. One such example is provided by the class of pseudorigid spaces over a complete discrete valuation ring $\oh_K$ of mixed characteristic. These spaces are locally isomorphic over $\spa\oh_K=\spa(\oh_K, \oh_K)$ to analytic open subsets of a formal scheme $\Xf$ fft over $\oh_K$, here regarded as an adic space (for a slightly different formulation, cf. Definition \ref{definicao dos espacos pseudo-rigidos}). Although the author was already working with this category during the fall of 2016, it was first introduced in the literature by Johansson and Newton in \cite{JN17}, to whom we essentially owe the designation of these spaces and calling our attention to some results in \cite{Abb}. We can now state our main result:

\begin{introthm}
Let $X$ be a normal pseudorigid space over $\oh_K$ and $Z$ be a Zariski closed subset of $X$. The following statements hold:
\begin{description}[style=unboxed, leftmargin=0.2cm]
\item[First Hebbarkeitssatz] If $\emph{codim}_X Z \geq 1$, then the restriction map $\oh_X^+(X) \rightarrow \oh_X^+(X\setminus Z)$ is an isomorphism of rings, i.e. every locally powerbounded function $f$ defined on the complement of $Z$ admits a unique locally powerbounded extension to $X$.
\item[Second Hebbarkeitssatz] If $\emph{codim}_X Z \geq 2$, then the restriction map $\oh_X(X) \rightarrow \oh_X(X\setminus Z)$ is an isomorphism of rings, i.e. every function $f$ defined on the complement of $Z$ admits a unique extension to $X$.
\end{description}
\end{introthm}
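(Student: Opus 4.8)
The plan is to adapt the arguments of Bartenwerfer and Lütkebohmert to the pseudorigid setting, the two genuinely new features being that the base $\spa\oh_K$ is not a field and that $X$ may carry a nonempty characteristic-$p$ locus $\{x : |\varpi_K(x)|=0\}$. Both assertions are local on $X$, so I will take $X=\spa(A,A^+)$ to be a normal affinoid pseudorigid space; then $A$ is a normal noetherian affinoid $\oh_K$-algebra, $A^+=A^\circ$ is the integral closure of a ring of definition $A_0$ in $A$ (in particular $A^+$ is integrally closed in $A$), and $\oh_X^+(U)=\{h\in\oh_X(U):|h(x)|\le 1 \text{ for all } x\in U\}$ for every open $U$. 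After shrinking I fix a topologically nilpotent unit $\varpi\in A$. Since $A$ is noetherian, $Z=V(I)$ and $\operatorname{codim}_XZ=\height I$; as soon as $\height I\ge 1$ the ideal $I$ contains a nonzerodivisor $g$, and the restriction $A=\oh_X(X)\to\oh_X(X\setminus V(g))$ factors through the injections $A\hookrightarrow A[1/g]\hookrightarrow\varprojlim_nA\langle\varpi^n/g\rangle$, so both restriction maps in the theorem are injective; only surjectivity is at issue, and for that it is harmless to enlarge $Z$ to any nowhere dense Zariski closed $Z'\supseteq Z$, since $\oh_X(X\setminus Z)$ (resp.\ $\oh_X^+(X\setminus Z)$) is then squeezed between $A$ (resp.\ $A^+$) and $\oh_X(X\setminus Z')$ (resp.\ $\oh_X^+(X\setminus Z')$).

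For the second Hebbarkeitssatz I would first use prime avoidance, together with the fact that a normal ring is $S_2$ (so $\operatorname{depth}_IA\ge 2$), to enlarge $Z$ to the zero locus of a regular sequence $(g_1,g_2)$. Then $X\setminus Z=U_1\cup U_2$ with $U_i=\{|g_i|\neq 0\}=\bigcup_n\{|g_i|\ge|\varpi^n|\}$ a rising union of rational subdomains, whence $\oh_X(U_i)=\varprojlim_nA\langle\varpi^n/g_i\rangle$ and similarly for $U_1\cap U_2$, and the sheaf property identifies $\oh_X(X\setminus Z)$ with the equalizer of $\oh_X(U_1)\oplus\oh_X(U_2)\rightrightarrows\oh_X(U_1\cap U_2)$. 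I must show this equalizer is $A$. Algebraically this is the identity $A[1/g_1]\cap A[1/g_2]=A$ inside $A[1/g_1g_2]$, which holds exactly because $\operatorname{depth}_{(g_1,g_2)}A\ge 2$, i.e.\ $H^0_{(g_1,g_2)}(A)=H^1_{(g_1,g_2)}(A)=0$. Transporting it across the flat completions $A\langle\varpi^n/g_i\rangle$ (on which $g_1,g_2$ stays a regular sequence) and then through the inverse limit over $n$ — where one needs $\varprojlim^1$ of the relevant towers to vanish, the transition maps having dense image — is the technical content of this step.

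For the first Hebbarkeitssatz, given $\height I\ge 1$ I would pick by prime avoidance a nonzerodivisor $g$ lying in every minimal prime of $I$, so that $Z\subseteq V(g)$ with $V(g)$ of pure codimension $1$; by the reduction above it suffices to extend the restriction of $f\in\oh_X^+(X\setminus Z)$ to $X\setminus V(g)$. I may further assume $X$ connected, so $A$ is a normal, hence Krull, domain, and I write $f=(f_n)$ with $f_n\in\oh_X^+(\{|g|\ge|\varpi^n|\})$. The crux is a Bartenwerfer-type estimate asserting that $|f(x)|\le 1$ forces the pole order of $f$ along $g$ to be bounded, so that $f$ actually lies in $A[1/g]\subseteq\oh_X(X\setminus V(g))$. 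Granting this, at each height-one prime $\mathfrak p$ of $A$ the local ring $A_{\mathfrak p}$ is a discrete valuation ring; for $\mathfrak p\ni g$ boundedness gives $v_{\mathfrak p}(f)\ge 0$ and for $\mathfrak p\not\ni g$ this is automatic, so $f\in\bigcap_{\height\mathfrak p=1}A_{\mathfrak p}=A$, and since $|f(x)|\le 1$ on the dense open $X\setminus V(g)$ it holds on all of $X$, giving $f\in\oh_X^+(X)$; this extension agrees with $f$ on $X\setminus V(g)$, hence on the reduced $X\setminus Z$.

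I expect the Bartenwerfer-type estimate of the previous paragraph to be the main obstacle. Over a base field it is proved by a finite Noether normalization $X\to(\text{unit polydisc})$ and a trace/discriminant computation that reduces boundedness to explicit Laurent expansions on the polydisc, where negative powers of a coordinate are killed by letting its radius tend to $0$; in the pseudorigid setting there is no base field, and along the characteristic-$p$ locus the generic separability needed for the trace form may fail, so the estimate must instead be obtained from a direct codimension-one analysis of the completed rational-domain algebras $A\langle\varpi^n/g\rangle$, exploiting only that $A$ is $R_1$ and that $A^+$ is integrally closed in $A$, and uniformly in $n$ so as to survive the passage to $\oh_X(X\setminus V(g))=\varprojlim_nA\langle\varpi^n/g\rangle$. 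A formal-model approach via admissible blow-ups streamlines some of the bookkeeping but does not by itself furnish the estimate, since bounded sections over the complement of a hypersurface still acquire poles on a normal formal model; it is precisely the $\oh^+$-hypothesis and this estimate that remove them.
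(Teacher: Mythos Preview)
Your outline diverges substantially from the paper's argument, and in both parts the key step is either missing or does not work as stated.

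\medskip

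\textbf{Second Hebbarkeitssatz.} The algebraic identity $A[1/g_1]\cap A[1/g_2]=A$ is correct for a normal (hence $S_2$) ring, but it does not transport to the analytic open $U_1\cup U_2$ in the way you suggest. On the rational domain $\{|g_1|\ge|\varpi^n|\}$ the element $g_1$ becomes a unit, so the ``regular sequence on $A\langle\varpi^n/g_1\rangle$'' observation buys nothing. More seriously, if you compute the equalizer at each finite level $n$ you get $\oh_X(V_{1,n}\cup V_{2,n})$ with $V_{i,n}=\{|g_i|\ge|\varpi^n|\}$, and $\bigcup_n(V_{1,n}\cup V_{2,n})=X\setminus Z$, so $\varprojlim_n$ of these equalizers is tautologically $\oh_X(X\setminus Z)$ again: the argument is circular. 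The passage from algebraic Hartogs to analytic Hartogs is not a $\varprojlim^1$ bookkeeping issue; it is the whole content of the theorem. The paper avoids this entirely: it reduces (using the rigid case) to $X=\spa A$ with $A$ an $\oh_K$-flat normal pseudoaffinoid domain and $Z$ contained in the special fibre, then proves a Noether normalisation lemma (lifting a finite injection $k((T))\langle X_1,\dots,X_r\rangle\hookrightarrow A/\pi A$ to a finite injection $D_{n,r}\hookrightarrow A\langle\pi/T^n\rangle$), checks the statement for $D_{n,r}$ and $I=(\pi,X_1)$ by a direct Laurent-series computation, and transfers the result along the finite map via a Holomorphic Transfer Lemma in the spirit of L\"utkebohmert's Hilfssatz~1.7.

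\medskip

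\textbf{First Hebbarkeitssatz.} You correctly identify the Bartenwerfer-type pole-bounding estimate as the crux, but you do not prove it, and your proposed route (``direct codimension-one analysis of $A\langle\varpi^n/g\rangle$ using only $R_1$ and integral closedness of $A^+$'') is not a method, it is a restatement of the goal. The difficulty is real: there is no base field, and along the special fibre the naive trace argument can fail. The paper's proof is accordingly the most elaborate part of the article. After the same reduction to $Z\subseteq V(\pi)$, it proceeds in six steps: a direct norm computation for $D_{n,r}$; a Powerbounded Transfer Lemma (which, unlike the holomorphic one, only extends $f$ to the unramified, locally free locus of the finite map, forcing one to invoke the already-proved second Hebbarkeitssatz to fill in the rest); a careful choice of Noether normalisation whose ramification locus avoids the special fibre when $A/\pi A$ is geometrically regular over $k((T))$; a change of topologically nilpotent unit to achieve geometric reducedness; a Reduced Fibre Theorem argument passing to a finite extension $L/K$; and a final faithfully flat descent along $\oh_K\to\oh_L$ to drop the hypothesis that $k$ is perfect. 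None of this is visible in your plan, and the Powerbounded Transfer plus second Hebbarkeitssatz combination is precisely the substitute for the estimate you were unable to supply.
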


We should remark that some notions in the above statement such as normality, codimension and Zariski closed subsets are not defined for general adic spaces. We will define all these notions for sufficiently well behaved analytic adic spaces, called Jacobson adic spaces, in the course of Section 2.

Let us briefly describe our proof of the Hebbarkeitssätze. Due to the rigid case, one is reduced to working in the case where $X=\spa A$ is the adic spectrum of an $\oh_K$-flat normal pseudoaffinoid algebra (cf. Definition \ref{definicao das algebras pseudo-afinoides}) and $Z$ is contained in the special fibre. The general strategy follows then Lütkebohmert's article. As a first step, we verify the statements for elementary pseudoaffinoid algebras $D_n \langle X_1, \dots, X_r \rangle$, where $D_n=\oh_K[[T]] \langle \pi/T^n \rangle[1/T]$, via direct computations with norms. The second ingredient is an adaptation of Noether's normalisation lemma:

\begin{introprop} 
Let $A$ be an $\oh_K$-flat pseudoaffinoid algebra, such that $\pi \notin A^{\times}$, and let $T$ be a topologically nilpotent unit of $A$. If $\phi: k((T)) \langle X_1, \dots, X_r \rangle \rightarrow  A/\pi A$ is a finite injection of $k((T))$-affinoid algebras, then it can be lifted to a finite injection $\Phi:D_n \langle X_1, \dots, X_r \rangle \rightarrow A \langle \pi/T^n \rangle$ of pseudoaffinoid algebras, for a sufficiently large natural number $n$.
\end{introprop}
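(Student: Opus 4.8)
The plan is to lift the generators of $\phi$, deduce injectivity from flatness, and obtain module-finiteness by a formal Nakayama argument over the special fibre. First I would record the structure of the target. As $T\in A^{\times}$, the element $\pi/T^{n}$ already lies in $A$, and $A\langle\pi/T^{n}\rangle$ is the rational localisation carved out by $|\pi|\le|T|^{n}$; the point is that there the identity $\pi=T^{n}\cdot(\pi/T^{n})$ writes $\pi$ as a topologically nilpotent element times a power-bounded one, so $\pi$ becomes \emph{topologically nilpotent} in $A\langle\pi/T^{n}\rangle$ --- this is what makes $\pi$-adic arguments available despite $\pi\notin A^{\times}$. By $\oh_{K}$-flatness of $A$, the localisation $A\langle\pi/T^{n}\rangle$ is again $\oh_{K}$-flat with $A\langle\pi/T^{n}\rangle/\pi=A/\pi A$, and the structural maps $\oh_{K}[[T]]\to A$ ($T\mapsto T$) together with $\pi/T^{n}\mapsto\pi/T^{n}$ assemble into a canonical continuous homomorphism $D_{n}\to A\langle\pi/T^{n}\rangle$ reducing modulo $\pi$ to the structural map $k((T))\to A/\pi A$. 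Finally, $\spa A\langle\pi/T^{n}\rangle$ shrinks onto a neighbourhood of the special fibre as $n$ grows, and ``$n$ sufficiently large'' will mean: over a tube small enough that previously built data extend.

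To build $\Phi$, choose lifts $x_{i}\in A\langle\pi/T^{n}\rangle$ of $\phi(X_{i})\in(A/\pi A)^{\circ}$. Since there are finitely many of them and since the tubes shrink onto the special fibre, for $n\gg0$ these $x_{i}$ may be taken power-bounded, i.e.\ lying in a ring of definition of $A\langle\pi/T^{n}\rangle$. Fixing such $n$ and such $x_{i}$, sending $X_{i}\mapsto x_{i}$ extends the canonical map above to a continuous homomorphism of pseudoaffinoid algebras $\Phi\colon D_{n}\langle X_{1},\dots,X_{r}\rangle\to A\langle\pi/T^{n}\rangle$ with $\overline{\Phi}=\phi$. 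Injectivity is then formal: $\overline{\Phi}=\phi$ is injective, so $\ker\Phi\subseteq\pi D_{n}\langle X_{1},\dots,X_{r}\rangle$; $\oh_{K}$-flatness of both source and target lets one cancel $\pi$, giving $\ker\Phi=\pi\ker\Phi$; and $D_{n}\langle X_{1},\dots,X_{r}\rangle$ is a Noetherian domain in which $\pi$ is a non-unit, so $\bigcap_{k}\pi^{k}D_{n}\langle X_{1},\dots,X_{r}\rangle=0$ by Krull's intersection theorem, whence $\ker\Phi=0$.

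It remains to prove $\Phi$ finite, which is the substance. Write $D_{n}\langle X_{1},\dots,X_{r}\rangle=\mathcal{D}[1/T]$ with $\mathcal{D}=\oh_{K}[[T]]\langle\pi/T^{n},X_{1},\dots,X_{r}\rangle$, a Noetherian ring complete for $(\pi,T)\mathcal{D}=(T)\mathcal{D}$, and choose a ring of definition $\mathcal{A}$ of $A\langle\pi/T^{n}\rangle$ that contains $\Phi(\mathcal{D})$, contains lifts of a fixed finite system of $k((T))\langle X_{1},\dots,X_{r}\rangle$-module generators of $A/\pi A$, satisfies $A\langle\pi/T^{n}\rangle=\mathcal{A}[1/T]$, and is complete for $(\pi,T)\mathcal{A}=(T)\mathcal{A}$ (it is $\oh_{K}$-flat). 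It suffices to show that $\Phi$ restricts to a module-finite map $\mathcal{D}\to\mathcal{A}$, since inverting $T$ then finishes. For that I would invoke the standard ``lift finite generators along an adically complete ring'' mechanism: lift generators of $\mathcal{A}/(\pi,T)\mathcal{A}$ over $\mathcal{D}/(\pi,T)\mathcal{D}$ to $\mathcal{A}$, write an arbitrary element as a finite $\mathcal{D}$-combination of them modulo $(\pi,T)^{k}$ for every $k$, and sum the resulting series using completeness of $\mathcal{A}$; this reduces module-finiteness of $\mathcal{D}\to\mathcal{A}$ to module-finiteness of the special-fibre map $\mathcal{D}/(\pi,T)\mathcal{D}\to\mathcal{A}/(\pi,T)\mathcal{A}$, and since killing $T$ preserves finiteness, further to that of the mod-$\pi$ formal model $\mathcal{D}/\pi\mathcal{D}\to\mathcal{A}/\pi\mathcal{A}$.

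This last reduction is the main obstacle. After inverting $T$, the map $\mathcal{D}/\pi\mathcal{D}\to\mathcal{A}/\pi\mathcal{A}$ becomes $\phi\colon k((T))\langle X_{1},\dots,X_{r}\rangle\to A/\pi A$, which is finite by hypothesis; but finiteness of a morphism of $k((T))$-affinoid algebras need not descend to a prescribed pair of formal models, and reconciling the two models is exactly what consumes the freedom to enlarge $n$. I expect that for $n\gg0$ the ring of definition $\mathcal{A}/\pi\mathcal{A}$ of $A/\pi A$ --- which grows as the tube shrinks --- becomes module-finite over $\mathcal{D}/\pi\mathcal{D}$; this is a form of Noether normalisation over the discrete valuation ring $k[[T]]$, which one can establish either by Bartenwerfer--L\"utkebohmert-style norm estimates, in the same spirit as the direct computations already done for the elementary algebras $D_{n}\langle X_{1},\dots,X_{r}\rangle$, or by a flatification argument showing that the admissible blow-up needed to make the model finite becomes an isomorphism over a small enough tube. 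Granting this, the completeness bootstrap above produces the sought finite injection $\Phi$ lifting $\phi$, after enlarging $n$ once more if necessary to absorb the finitely many conditions imposed en route.
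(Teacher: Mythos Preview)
Your construction of $\Phi$ and your injectivity argument match the paper's: lift the $\phi(X_i)$ to power-bounded elements (the paper makes explicit that this works by first lifting arbitrarily, observing that the lift satisfies a monic equation over a ring of definition $A_0$ with remainder in $(\pi/T^{k_i})A_0$, and then enlarging $n$), and then use $\oh_K$-flatness plus Krull's intersection theorem in the domain $D_{n,r}$ to kill the kernel.

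The finiteness argument, however, has a genuine gap --- and you correctly identify it yourself. Trying to descend module-finiteness to a specific pair of formal models $\mathcal{D}\to\mathcal{A}$ is delicate: the reduction $\mathcal{D}/\pi\mathcal{D}$ is not simply $k[[T]]\langle X_1,\dots,X_r\rangle$ (the class of $\pi/T^n$ contributes $T$-torsion), the ring $A\langle\pi/T^n\rangle/\pi=A/\pi A$ does not actually change with $n$, and the claim that ``the ring of definition grows as the tube shrinks until it becomes module-finite over $\mathcal{D}/\pi\mathcal{D}$'' is precisely the non-obvious point. The flatification and norm-estimate sketches you offer are plausible research directions but are not carried out.

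The paper sidesteps this entirely. Rather than comparing formal models, it argues at the level of Tate rings via integrality. Choose topological generators $y_1,\dots,y_k$ of $A$ as a $D_n$-algebra. Since $\phi$ is finite, each $\overline{y_j}\in A/\pi A$ satisfies a monic polynomial $\overline{q_j}$ with coefficients in $(A/\pi A)^\circ$; lift to monic $q_j$ over $D_{n,r}^\circ$. Then $q_j(y_j)\in\pi A$, hence $q_j(y_j)\in(\pi/T^{l_j})A_0$ for some $l_j$. Enlarging $n$ once more forces $q_j(y_j)\in A^{\circ\circ}$. Now Huber's finiteness criterion \cite[Lemma~1.4.3]{EtHuber} --- a topologically finite-type map whose topological generators satisfy monic equations with topologically nilpotent remainder is finite --- gives the conclusion directly. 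This is exactly the same integrality trick used to make the $x_i$ power-bounded, applied a second time to the module generators; it avoids any statement about finiteness of formal models.
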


The remaining tool consists of holomorphic, resp. bounded transfer lemmata (cf. Lemma \ref{provar cenas en passant finito} and Lemma \ref{provar cenas en passant finito 1}) which work as analogues of \cite[Hilfssatz 1.7]{Luet}. For the second Hebbarkeitssatz, the aforementioned steps immediately yield the result. As for the first Hebbarkeitssatz, in virtue of the slight weaknesses of Lemma \ref{provar cenas en passant finito 1}, we are forced to develop additional arguments, such as finding Noether normalisations which are unramified at certain given regular primes. This was not the route taken by Lütkebohmert in \cite{Luet}, as he uses a result of Kiehl about rigid spaces for which we could not find a pseudorigid analogue. 

Next, we provide some applications of these results to formal schemes. For sufficiently well behaved formal schemes, one can show that a locally powerbounded function on the set of analytic points extends to the total space.

\begin{introprop}
Let $\Xf$ be a normal excellent formal scheme, which is nowhere discrete. Then, the restriction map $\oh_{\Xf}(\Xf) \rightarrow \oh_{\Xf_a}^+(\Xf_a)$ is an isomorphism of topological rings.
\end{introprop}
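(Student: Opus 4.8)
The plan is to reduce, by a sheaf‑theoretic argument together with a normalised admissible blow‑up, to the case of an affine formal scheme whose ideal of definition is principal, where the statement becomes a consequence of the normality of the formal model. We use throughout that, $\Xf$ being normal and hence reduced, the hypothesis that $\Xf$ is nowhere discrete means exactly that no irreducible component of $\Xf$ is contained in the non‑analytic locus; equivalently, $\Xf_a$ is schematically dense in $\Xf$, or again $\Xf$ is covered by affine opens $\spf B$ in which the ideal of definition contains a non‑zero‑divisor. Now the source of the restriction map is the sheaf $\oh_\Xf$ and its target is the pushforward $j_*\oh^+_{\Xf_a}$ along the open immersion $j\colon\Xf_a\hookrightarrow\Xf$, so the map is a morphism of sheaves of topological rings on $\Xf$; it therefore suffices to prove that it is a topological isomorphism when $\Xf=\spf A$ is affine, with $A$ a normal excellent Noetherian adic ring whose (finitely generated) ideal of definition $I$ contains a non‑zero‑divisor.

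Let $\Yf$ be the normalisation of the admissible formal blow‑up of $\spf A$ along $I$. By excellence the normalisation is finite over the blow‑up, so $\Yf$ is a normal excellent formal scheme, it is still nowhere discrete, and it induces an isomorphism of analytic loci $\Yf_a\xrightarrow{\sim}\Xf_a$ (cf.\ \cite{Abb}). Since $\Yf\to\spf A$ is proper and an isomorphism over a schematically dense open, the coherence theorem for proper morphisms of Noetherian formal schemes together with the normality of $A$ gives $\oh_\Yf(\Yf)=A$, compatibly with the $I$‑adic topologies. Finally, $\Yf$ admits a finite covering by affine opens $\spf B_i$ in which $IB_i=(\pi_i)$ is principal; as $\Yf$ is nowhere discrete, $\pi_i$ is a non‑zero‑divisor in the normal Noetherian ring $B_i$, so $B_i[1/\pi_i]$ is complete for its natural topology and sits inside $\operatorname{Frac}(B_i)$.

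On such a chart $(\spf B_i)_a$ is the affinoid adic space attached to the rational subset $\{|\pi_i|\neq 0\}$ of $\spf B_i$, so $\oh^+_{(\spf B_i)_a}((\spf B_i)_a)$ is the integral closure of $B_i$ in $B_i[1/\pi_i]$; as $B_i[1/\pi_i]\subseteq\operatorname{Frac}(B_i)$ and $B_i$ is normal, this integral closure equals $B_i$. (Equivalently: a power‑bounded section $g$ over $(\spf B_i)_a$ satisfies $\pi_i^m g^n\in B_i$ for all $n$ and some fixed $m$, hence is almost integral over $B_i$, hence — $B_i$ being Noetherian — integral over $B_i$, hence lies in $B_i$ by normality.) Covering the intersections $\spf B_i\cap\spf B_j$ by affine opens to which the same computation applies and gluing, we obtain
\[
\oh^+_{\Xf_a}(\Xf_a)=\oh^+_{\Yf_a}(\Yf_a)=\oh_\Yf(\Yf)=A
\]
as rings, and as topological rings since on each chart the topology is the $I$‑adic one on $B_i$; injectivity is in any case immediate from the schematic density of $\Xf_a$ and the reducedness of $A$.

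The step I expect to be the main obstacle is the blow‑up reduction, where one must simultaneously guarantee that passing to the normalised blow‑up does not alter the ring of global sections — this uses normality together with the schematic density of the analytic locus — and that the local equation $\pi_i$ of the exceptional divisor remains a non‑zero‑divisor on each chart, which is precisely the point at which the hypothesis "nowhere discrete" is indispensable (the conclusion already fails without it, e.g.\ for $\spf(\oh_K\times k)$, whose analytic locus does not see the factor $k$). The remaining ingredients — the sheaf‑theoretic descent to the affine case, the identification of $\oh^+$ on an affinoid chart, and the comparison of topologies — are then routine.
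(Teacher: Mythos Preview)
Your proof is correct and follows essentially the same route as the paper: reduce to the affine case, pass to the normalised admissible blow-up to make the ideal of definition locally principal, identify global sections of the blow-up with $A$ via properness and normality (the paper phrases this as Zariski's Main Theorem on $\spec A$ followed by the theorem on formal functions, which is the same content as your appeal to the coherence theorem), and then invoke the monogeneous case, which the paper records as a separate lemma and you inline. The only cosmetic difference is that the paper carries out the blow-up and normalisation on $\spec A$ before completing, while you work directly in the formal category; note also that the isomorphism $\Yf_a\cong\Xf_a$ for the \emph{normalised} blow-up uses that $\Xf_a$ is already normal, a one-line remark the paper makes explicit and you leave implicit.
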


In the above proposition, we regard a formal scheme as an adic space and denote by $\Xf_a$ its set of analytic points. The claim is clear for principal ideals of definition and can be reduced to that case combining Zariski's Main Theorem with the theorem on formal functions. As a corollary of this and the first Hebbarkeitssatz, we obtain a new proof of a theorem of de Jong (cf. \cite[Theorem 7.4.1]{deJong}). Showing this exact statement was actually our original motivation, before discovering it had already been proven by de Jong. One also has the following corollary:

\begin{introcor}
(Scholze) Let $\oh_K$ be a complete discrete valuation ring with an algebraically closed residue field $k$. Let $\mathcal{C}$ be the category of $\oh_K$-flat normal formal schemes, locally formally of finite type over $\oh_K$. Let $\mathcal{D}$ be the category of triples $(X,Y,p)$, where $X$ is a $K$-rigid space, $Y$ is a perfect $k$-scheme and $p: \lvert X \rvert\rightarrow \lvert Y\rvert$ is a map of topological spaces (here, $\lvert X \rvert $ means just the classical rigid points). Then the functor $F:\mathcal{C}\rightarrow \mathcal{D}$ given by $\Xf \mapsto (\Xf_{\eta}, (\Xf_{\emph{red}})^{\emph{perf}}, \text{sp}_{\Xf})$ is fully faithful. 
\end{introcor}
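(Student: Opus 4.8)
The plan is to establish faithfulness and fullness of $F$ separately. Faithfulness will use only the generic fibre, while fullness is where the first Hebbarkeitssatz and the preceding proposition on global sections of formal schemes do the real work.

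For faithfulness, suppose $g_1,g_2\colon\Xf\to\Yf$ induce the same morphism in $\mathcal{D}$; in particular $g_{1,\eta}=g_{2,\eta}\colon\Xf_\eta\to\Yf_\eta$. Working affine-locally, pick $\spf B\subseteq\Yf$ and $\spf A\subseteq\Xf$ with $\spf A\subseteq g_i^{-1}(\spf B)$. The two ring maps $B\rightrightarrows A$ agree after composing with $A\hookrightarrow\oh_{\Xf_\eta}((\spf A)_\eta)$, which is injective because $A$ is a reduced $\oh_K$-flat admissible algebra (a section of $A$ vanishing on the whole generic fibre is nilpotent in $A[1/\pi]$, hence zero); so they agree, and one glues.

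For fullness, let $(f_\eta,f_s)\colon F(\Xf)\to F(\Yf)$ be a morphism in $\mathcal{D}$, so that $f_\eta\colon\Xf_\eta\to\Yf_\eta$ is a morphism of rigid spaces, $f_s\colon(\Xf_{\mathrm{red}})^{\mathrm{perf}}\to(\Yf_{\mathrm{red}})^{\mathrm{perf}}$ one of $k$-schemes, and the two are compatible with the specialisation maps on underlying spaces. Since a formal scheme, its reduction, and its special fibre share the same topological space and perfection is a universal homeomorphism, $f_s$ has an underlying continuous map $|f_s|\colon|\Xf|\to|\Yf|$; this will be the underlying map of the morphism $g$ we build. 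To construct $g$ we may assume $\Yf=\spf B$ is affine: cover $\Yf$ by affine opens, set $\mathfrak U_i:=|f_s|^{-1}(\spf B_i)\subseteq\Xf$ (open formal subschemes, again in $\mathcal{C}$), build $g_i\colon\mathfrak U_i\to\spf B_i$ as below, and glue — the gluing is legitimate because by faithfulness any two morphisms over an overlap with the same generic fibre coincide. So fix an affine $\spf B\subseteq\Yf$ and put $\mathfrak U:=|f_s|^{-1}(\spf B)$. By the preceding proposition (applicable since $\mathfrak U$ and $\spf B$ are normal, $\oh_K$-flat and locally formally of finite type, hence excellent and nowhere discrete) we have $\oh_\Xf(\mathfrak U)=\oh^+_{\mathfrak U_a}(\mathfrak U_a)$ and $B=\oh^+_{(\spf B)_a}((\spf B)_a)$, where $\mathfrak U_a$ and $(\spf B)_a$ are normal $\oh_K$-flat pseudorigid spaces. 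On each of them $\{\pi=0\}$ is the zero locus of the nonzerodivisor $\pi$, hence Zariski closed of codimension $\geq 1$, with complement the rigid generic fibre $\mathfrak U_\eta$, resp.\ $(\spf B)_\eta$; the specialisation compatibility forces $f_\eta$ to restrict to a morphism $f_\eta'\colon\mathfrak U_\eta\to(\spf B)_\eta$ over $K$ (the admissible opens $f_\eta^{-1}((\spf B)_\eta)$ and $\mathfrak U_\eta$ have the same classical points). Pulling back powerbounded functions along $f_\eta'$ and applying the first Hebbarkeitssatz to $\mathfrak U_a$ and to $(\spf B)_a$ gives a continuous $\oh_K$-algebra map
$$B=\oh^+_{(\spf B)_a}((\spf B)_a)\xrightarrow{\ \sim\ }\oh^+((\spf B)_\eta)\xrightarrow{(f_\eta')^{*}}\oh^+(\mathfrak U_\eta)\xleftarrow{\ \sim\ }\oh^+_{\mathfrak U_a}(\mathfrak U_a)=\oh_\Xf(\mathfrak U),$$
hence a morphism of formal schemes $g_{\spf B}\colon\mathfrak U\to\spf B$.

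It then remains to check $F(g)=(f_\eta,f_s)$. The generic fibre of $g_{\spf B}$ is $f_\eta'$: a morphism into $(\spf B)_\eta$, the Raynaud generic fibre of an affine formal scheme, is determined by the pullback map on powerbounded global sections for a quasi-compact quasi-separated source, and by construction that pullback is $(f_\eta')^{*}$; gluing the $g_i$ yields $g_\eta=f_\eta$. For the special fibre, $g_s^{\mathrm{perf}}$ and $f_s$ are morphisms of reduced schemes, of finite type over the \emph{algebraically closed} $k$ up to the universal homeomorphism of perfection, and they have the same underlying continuous map — here one uses surjectivity of the specialisation map (valid since $\Xf$ is nowhere discrete) to recognise that the underlying map of $g_{\spf B}$ is $|f_s|$; over an algebraically closed field such a morphism is pinned down by its underlying map of topological spaces (check this locally, where the target is separated), so $g_s^{\mathrm{perf}}=f_s$. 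Compatibility with the specialisation maps holds automatically for $F(g)$. The main obstacle is not the Hebbarkeitssatz input, which enters cleanly, but precisely these two bookkeeping points: identifying $g_{\spf B,\eta}$ with $f_\eta'$ when $(\spf B)_\eta$ is not affinoid (formal parameters of $B$ producing open-disk factors), and identifying the underlying map of $g_{\spf B}$ with $|f_s|$; both are handled by passing to classical points together with surjectivity of specialisation.
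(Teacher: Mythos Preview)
Your proof is correct and follows essentially the same route as the paper: both reduce to the affine case via the underlying map $|f_s|$, then invoke de Jong's theorem (the first Hebbarkeitssatz combined with Proposition~\ref{estender para pontos nao analiticos}) to manufacture the ring map $B\to\oh_\Xf(\mathfrak U)$ from the pullback on $\oh^+$-sections of the generic fibres, and finally verify agreement on the reduced locus using surjectivity of the specialisation map onto closed points together with the hypothesis that $k$ is algebraically closed. The only organisational difference is that the paper first proves the pseudorigid analogue (with $\Xf_a$ in place of $\Xf_\eta$) and then deduces this corollary from it, whereas you argue directly with the rigid generic fibre; your handling of the two bookkeeping points you flag---identifying $g_{\spf B,\eta}$ with $f'_\eta$ via the universal property of $\spa(B,B)$, and identifying $|g_{\spf B}|$ with $|f_s|$ via specialisation---is exactly what the paper does (more tersely) in the preceding corollary.
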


In the above corollary $\Xf_{\eta}$ denotes the generic fibre of $\Xf$, $(\Xf_{\text{red}})^{\text{perf}}$ the perfection of its reduction and $\text{sp}_{\Xf}$ the classical specialisation mapping. This is relevant to the study of integral models of local Shimura varieties, as one may then recover it just by knowing the perfection of its special fibre and the specialisation mapping. The reason why we consider the perfection here is that, if $\breve{\mathcal{M}}$ denotes a formal scheme representing a moduli problem of $p$-divisible groups as in \cite[Chapter 3]{RZ}, then the perfection of its reduction is an affine Deligne-Lusztig variety, by \cite[Proposition 3.11]{Zhu}. The ideas behind this paragraph were explained in more detail in Scholze's talk at the Super Arbeitsgemeinschaft held in Bonn in honor of Michael Rapoport.

\subsection*{Outline}
\addtocontents{toc}{\protect\setcounter{tocdepth}{1}}
In Section 2, we review some facts on coherent sheaves over locally noetherian analytic adic spaces and apply these to define Zariski closed subsets globally in terms of coherent sheaves of ideals. In Section 3, we introduce the class of Jacobson adic spaces, whose behaviour resembles that of rigid spaces. We give a sufficient criterion based on work of \cite{Abb} to determine whether an analytic adic space is Jacobson. Then we define absolute properties such as reducedness, normality and regularity for Jacobson adic spaces, and also define the codimension of a Zariski closed subset. In Section 4, we introduce pseudorigid spaces over a complete DVR and establish some of their properties, including our version of the Noether normalisation lemma. In Section 5, we give our proofs of the Hebbarkeitssätze and the final section is concerned with the applications to formal schemes alluded to above.

\subsection*{Notation}
We fix the following notation for the rest of the paper: $K$ denotes a discretely valued complete nonarchimedean field, $\oh_K$ its ring of integers, $\pi_K$ a uniformiser of $\oh_K$ and $k$ the residue field of $\oh_K$. We will always use the letters $A$ and $B$ (resp. $X$, $Y$ and $Z$) to denote strongly noetherian complete Tate rings (resp. locally noetherian analytic adic spaces).
\subsection*{Acknowledgements}
I want to heartily thank my advisor Peter Scholze for his guidance and enthusiasm during this work, and for inviting me to give two talks on the subject at the Arbeitsgemeinschaft Arithmetische Geometrie in Bonn. Evidently, my gratitude extends to everyone present at the talks for their attention and pertinent questions. I am also indebted to Mafalda Santos, Johannes Anschütz and Yichao Tian for helpful conversations regarding this work.
\section{Coherent sheaves of $\oh_X$-modules}

In this section, we recall the notion of coherent sheaves of $\oh_X$-modules and review their main properties, following \cite{BSHuber}. This will be put to use to obtain a better understanding of the notions of closed immersions and finite morphisms. During the whole section, we will use the notation established during the introduction.

\begin{defn}
The category of sheaves of $\oh_X$-modules, denoted by $\text{Mod}_{\oh_X}$, is the category whose objects are sheaves $\mathcal{F}$ on $X$ with values in the category of complete topological groups together with a continuous $\oh_X$-action such that, for all open subsets $U \subseteq X$, $\mathcal{F}(U)$ becomes an $\oh_X(U)$-module; and whose morphisms are maps of sheaves respecting the $\oh_X$-action.
\end{defn}

Assume now we are given an ring of integral elements $A^+ \subseteq A$, and a finite $A$-module $M$. Then one can form a presheaf $\widetilde{M}$ of complete topological groups on $X=\spa(A, A^+)$ such that $\widetilde{M}(U)=M\otimes_A \oh_X(U)$ for all rational subsets $U \subseteq X$, where the right hand side is endowed with the natural topology, and the restriction map $\widetilde{M}(U) \rightarrow \widetilde{M}(V)$ is the $M$-tensoring over $A$ of the rational localisation $\oh_X(U) \rightarrow \oh_X(V)$, for any pair of rational subsets $V \subseteq U$. This presheaf carries an obvious action of $\oh_X$ and it turns out to be an acyclic sheaf of abelian groups (cf. \cite[Theorem 2.5]{FRHuber}). We are naturally led to the definition of a coherent sheaf:

\begin{defn}
We say that a sheaf of $\oh_X$-modules $\mathcal{F}$ is coherent if there is an affinoid cover $X=\bigcup_i U_i$ such that, for all $i$, $\mathcal{F}\big{|}_{U_i} \cong \widetilde{M_i}$ for some finite $\oh_X(U_i)$-module $M_i$.
\end{defn}

Over affinoids, we do not actually encounter any new coherent sheaves: 

\begin{thm}\label{modulos coerentes sobre afinoides sao induzidos por modulos sobre as seccoes globais}\emph{\cite[Satz 3.3.12]{BSHuber}}
Let $A$ be a strongly noetherian complete Tate ring, $A^+$ a ring of integral elements of $A$ and $X=\emph{\spa}(A, A^+)$ the associated adic space. Then, the assignment $M \mapsto \widetilde{M}$ defines an equivalence between the category of finite $A$-modules and the category of coherent sheaves on $X$.
\end{thm}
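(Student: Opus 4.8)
The plan is to reduce the statement to standard commutative algebra by exploiting the sheaf property and the acyclicity of $\widetilde{M}$. First I would check that the functor $M \mapsto \widetilde{M}$ is well-defined, i.e. that $\widetilde{M}$ is genuinely a sheaf (not merely a presheaf) on all of $X$, and not just on the basis of rational subsets. Since $X$ is spectral and the rational subsets form a basis stable under finite intersection, it suffices to verify the sheaf condition for finite coverings of a rational subset $U = \spa(B, B^+)$ by rational subsets $U_i$; here one uses that $\oh_X$ itself is a sheaf (as $A$ is strongly noetherian and Tate) together with the fact that $M \otimes_A (-)$ is right exact and, crucially, that the relevant \v{C}ech complex for $\oh_X$ over a rational covering is exact in positive degrees — this is exactly the acyclicity cited from \cite[Theorem 2.5]{FRHuber}, and since $M$ is a finite $A$-module one can pick a finite presentation $A^m \to A^n \to M \to 0$ and deduce exactness of the $M$-version by a diagram chase, $A$ being noetherian so that the kernel is again finite. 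This shows $\widetilde{M} \in \mathrm{Mod}_{\oh_X}$ and that the assignment is functorial; moreover the same acyclicity gives $H^i(X, \widetilde{M}) = 0$ for $i > 0$, which we will want for essential surjectivity arguments if needed.

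Next I would prove full faithfulness. On global sections we have $\widetilde{M}(X) = M \otimes_A \oh_X(X) = M \otimes_A A = M$ (using $\oh_X(X) = A$, which holds since $A$ is a complete Tate ring and $X = \spa(A,A^+)$), so $\Hom_{\oh_X}(\widetilde{M}, \widetilde{N})$ receives a map from $\Hom_A(M, N)$ by taking global sections, and conversely any morphism of sheaves of $\oh_X$-modules $\varphi: \widetilde{M} \to \widetilde{N}$ is determined by its effect on $M = \widetilde{M}(X)$: indeed for any rational $U$ the square relating $\widetilde{M}(X) \to \widetilde{M}(U)$ and $\widetilde{N}(X) \to \widetilde{N}(U)$ commutes, and since $\widetilde{M}(U) = M \otimes_A \oh_X(U)$ is generated as an $\oh_X(U)$-module by the image of $M$, the map $\varphi_U$ is forced to be $\varphi_X \otimes_A \mathrm{id}_{\oh_X(U)}$. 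Hence $\Hom_{\oh_X}(\widetilde{M},\widetilde{N}) = \Hom_A(M, N)$ and the functor is fully faithful; this step is essentially formal once the sheaf property is in hand.

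For essential surjectivity, given a coherent sheaf $\mathcal{F}$ on $X$, set $M := \mathcal{F}(X)$, a module over $A = \oh_X(X)$; I would first argue $M$ is a finite $A$-module, then construct a natural map $\widetilde{M} \to \mathcal{F}$ and show it is an isomorphism locally. By definition of coherence there is a finite affinoid cover $X = \bigcup U_i$ with $\mathcal{F}|_{U_i} \cong \widetilde{M_i}$ for finite $\oh_X(U_i)$-modules $M_i$; refining, one may take the $U_i$ rational in $X$. One then knows $\mathcal{F}(U_i) = M_i$ is finite over $\oh_X(U_i)$, and using the \v{C}ech sequence $0 \to \mathcal{F}(X) \to \prod_i \mathcal{F}(U_i) \to \prod_{i,j} \mathcal{F}(U_i \cap U_j)$ together with the strong noetherian hypothesis (so that the terms are noetherian and the relevant maps have finitely generated kernels) I would deduce $M = \mathcal{F}(X)$ is a finite $A$-module. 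The counit $\widetilde{M} \to \mathcal{F}$, adjoint to $\mathrm{id}_M$, then restricts on each $U_i$ to a map $\widetilde{M \otimes_A \oh_X(U_i)} \to \widetilde{M_i}$; checking this is an isomorphism amounts to the statement that coherent sheaves ``localise well'', i.e. $\mathcal{F}(U) = \mathcal{F}(X) \otimes_A \oh_X(U)$ for $U$ rational — and this is the genuine content, proved by the standard Tate-style argument (Gerritzen--Grauert / the acyclicity theorem of Kiehl type) reducing to Laurent coverings by one function at a time and an explicit completed-tensor-product computation, now available over strongly noetherian Tate rings by \cite{FRHuber}.

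I expect the main obstacle to be precisely this last point — showing that over a strongly noetherian complete Tate ring the presheaf $\widetilde{M}$ is acyclic and that finitely generated modules over rational localisations patch to a finite $A$-module, i.e. the full analogue of Kiehl's theorem in Huber's setting. Fortunately this is exactly what is packaged in the cited references \cite[Theorem 2.5]{FRHuber} and \cite[Satz 3.3.12]{BSHuber}, so in the write-up I would lean on those and merely indicate how full faithfulness and the construction of $M$ from $\mathcal{F}$ follow; the conceptual subtlety, as always, is that completed tensor products need not be exact in general, and it is only the noetherian/Tate hypotheses that rescue the argument.
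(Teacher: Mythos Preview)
The paper does not prove this theorem at all: it is stated with a citation to \cite[Satz 3.3.12]{BSHuber} and used thereafter as a black box, so there is no argument in the text to compare your sketch against. In that sense your write-up is already strictly more than what the paper provides, and your overall outline (sheaf property and acyclicity from \cite[Theorem 2.5]{FRHuber}; full faithfulness from $\widetilde{M}(X)=M$; essential surjectivity via $M:=\mathcal{F}(X)$ and a counit map) is the standard and correct shape of Kiehl's argument.

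There is, however, one genuine soft spot. In the essential-surjectivity step, the sentence ``using the \v{C}ech sequence $0\to\mathcal{F}(X)\to\prod_i\mathcal{F}(U_i)\to\prod_{i,j}\mathcal{F}(U_i\cap U_j)$ together with the strong noetherian hypothesis \dots\ I would deduce $M=\mathcal{F}(X)$ is a finite $A$-module'' does not work as written: $\prod_i\mathcal{F}(U_i)$ is finite over $\prod_i\oh_X(U_i)$, not over $A$, and $A\to\prod_i\oh_X(U_i)$ is typically far from finite, so noetherianity of $A$ alone tells you nothing about $\mathcal{F}(X)$ as an $A$-module. Establishing this finiteness, simultaneously with the base-change identity $\mathcal{F}(U)\cong\mathcal{F}(X)\otimes_A\oh_X(U)$, is exactly the non-formal content of the theorem; in Huber's proof (as in Kiehl's) it comes from reducing to a Laurent covering by a single element and a Cartan-type approximation/gluing argument, not from a bare \v{C}ech-kernel computation. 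You do flag this correctly in your final paragraph and defer to the references, but note that invoking \cite[Satz 3.3.12]{BSHuber} for that step is circular, since that is the statement under discussion; the honest citation for the input you need is the acyclicity theorem plus the Cartan-lemma machinery in \cite{BSHuber}, not the packaged Satz itself.
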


Now we turn to our application of this theorem, namely to a better comprehension of closed immersions and finite morphisms, defined in \cite[Section 1.4]{EtHuber}. Let us first quickly observe that, given a coherent sheaf of $\oh_X$-algebras $\mathcal{A}$, we may construct its relative adic spectrum $\spa \mathcal{A}$ as the adic space representing the functor $\Hom(\mathcal{A}, -):\text{Ad}_X \rightarrow \text{Set}$ which maps an adic space $Y$ over $X$ to the set $\Hom_{\oh_X-\textrm{Alg}}(\mathcal{A}, f_*\oh_Y)$. Hence it becomes clear that every finite adic space over $X$ is the relative adic spectrum of a coherent sheaf of $\oh_X$-algebras, unique up to isomorphism. As a corollary, one obtains

\begin{cor}
Under the hypothesis of Theorem \ref{modulos coerentes sobre afinoides sao induzidos por modulos sobre as seccoes globais}, every adic space $Y$ finite over $X$ is of the form $\emph{\spa}(B, B^+)$ for some finite $A$-algebra $B$. In particular, closed immersions into $X$ correspond bijectively to ideals of $A$.
\end{cor}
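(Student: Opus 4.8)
The plan is to combine the relative adic spectrum construction recalled just above with Theorem \ref{modulos coerentes sobre afinoides sao induzidos por modulos sobre as seccoes globais}. Given a finite morphism $f\colon Y\to X$, we already observed that $Y\cong\spa\mathcal{A}$ for the coherent sheaf of $\oh_X$-algebras $\mathcal{A}=f_*\oh_Y$, unique up to isomorphism. First I would apply Theorem \ref{modulos coerentes sobre afinoides sao induzidos por modulos sobre as seccoes globais} to the underlying coherent sheaf of $\oh_X$-modules: it is of the form $\widetilde{B}$ for the finite $A$-module $B:=\mathcal{A}(X)=\oh_Y(Y)$. Since on rational subsets $U$ both $\widetilde{M}\otimes_{\oh_X}\widetilde{N}$ and $\widetilde{M\otimes_A N}$ compute $M\otimes_A N\otimes_A\oh_X(U)$, the functor $M\mapsto\widetilde{M}$ is monoidal, and being an equivalence it is in particular fully faithful; hence the multiplication and unit making $\mathcal{A}$ a sheaf of algebras correspond to an $A$-algebra structure on $B$ turning it into a finite $A$-algebra.

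Next I would verify that $B$ is again a strongly noetherian complete Tate ring, so that $\spa(B,B^+)$ makes sense. A pseudo-uniformiser $\varpi$ of $A$ is a unit of $A$, hence of $B$, and is topologically nilpotent for the natural (finite $A$-module) topology on $B$, so $B$ is Tate; it is complete because it is a finitely generated module over the complete ring $A$. For strong noetherianity, $B\langle X_1,\dots,X_n\rangle\cong B\otimes_A A\langle X_1,\dots,X_n\rangle$ is a finite module over the noetherian ring $A\langle X_1,\dots,X_n\rangle$, hence noetherian. Now set $B^+$ to be the integral closure in $B$ of the image of $A^+$; this is a ring of integral elements and, by the theory of finite morphisms in \cite[Section 1.4]{EtHuber}, the map $g\colon\spa(B,B^+)\to X$ is finite. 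For every rational subset $U\subseteq X$ the preimage $g^{-1}(U)$ is the rational subset obtained by base change and $\oh_{\spa(B,B^+)}(g^{-1}(U))=B\otimes_A\oh_X(U)$, so $g_*\oh_{\spa(B,B^+)}=\widetilde{B}=\mathcal{A}$ as sheaves of $\oh_X$-algebras. By the uniqueness of the relative adic spectrum we conclude $Y\cong\spa\mathcal{A}\cong\spa(B,B^+)$ over $X$, as claimed.

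For the last assertion, recall that a closed immersion $\iota\colon Y\hookrightarrow X$ (in the sense of \cite[Section 1.4]{EtHuber}) is a finite morphism for which $\oh_X\to\iota_*\oh_Y$ is surjective; its kernel is then a coherent sheaf of ideals $\mathcal{I}\subseteq\oh_X$, which by Theorem \ref{modulos coerentes sobre afinoides sao induzidos por modulos sobre as seccoes globais} equals $\widetilde{I}$ for a unique ideal $I=\mathcal{I}(X)\subseteq A$. By the first part, $Y\cong\spa(A/I,C)$ with $C$ the integral closure of the image of $A^+$ in $A/I$, a canonical presentation, so the assignment $\iota\mapsto I$ is well defined; conversely every ideal $I\subseteq A$ produces the closed immersion $\spa(A/I,C)\hookrightarrow X$. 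The two assignments are mutually inverse, since $I$ is recovered as the global sections of the kernel sheaf of $\oh_X\to\iota_*\oh_Y$ and, conversely, the kernel sheaf of $\oh_X\to\widetilde{A/I}$ is $\widetilde{I}$.

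The only genuinely technical point, and the step I expect to require the most care, is the identification $g_*\oh_{\spa(B,B^+)}=\widetilde{B}$ — that is, that a finite morphism over an affinoid is computed on rational subsets by base change — together with the check that $B$ inherits strong noetherianity and completeness. Once these are in hand, everything else follows formally from Theorem \ref{modulos coerentes sobre afinoides sao induzidos por modulos sobre as seccoes globais}, the monoidality of $M\mapsto\widetilde{M}$, and the universal property of the relative adic spectrum.
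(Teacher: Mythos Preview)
Your proposal is correct and follows exactly the approach the paper indicates: the paper states the corollary without an explicit proof, deriving it immediately from Theorem \ref{modulos coerentes sobre afinoides sao induzidos por modulos sobre as seccoes globais} together with the observation that every finite $X$-space is the relative adic spectrum of a coherent sheaf of $\oh_X$-algebras. Your write-up simply unpacks this implication in detail (monoidality of $M\mapsto\widetilde{M}$, strong noetherianity and completeness of $B$, and the identification $g_*\oh_{\spa(B,B^+)}=\widetilde{B}$), so it is a faithful expansion of the paper's intended argument.
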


We may also use coherent ideal sheaves to define Zariski closed subsets:

\begin{defn}
Let $X$ be a locally noetherian analytic adic space. A subset $Z \subseteq X$ is Zariski closed if it equals the support $V(\mathcal{I})=\{ x \in X: \mathcal{I}_x \neq \oh_{X,x}\}$ of some coherent sheaf of ideals $\mathcal{I}$ on $X$.
\end{defn}

Obviously, there are several coherent ideals with the same support. The usual way around this in algebraic geometry is to focus on radical ideals, but in the adic realm we do not know if reducedness is stable under rational localisations. We do have the following safeguard, whose proof we leave to the reader.

\begin{prop}\label{quando os ideais definem o mesmo fechado}
Keep the notation of Theorem \ref{modulos coerentes sobre afinoides sao induzidos por modulos sobre as seccoes globais} and assume that $A$ is a Jacobson ring. Then two ideals $I_i$, $i=1, 2$ define the same Zariski closed subset on $X$ if and only if they have the same radical.
\end{prop}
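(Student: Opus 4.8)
The plan is to translate the set-theoretic equality ``$V(\widetilde{I_1})=V(\widetilde{I_2})$'' into a statement about the supports of the points of $X$, and then feed in the Jacobson hypothesis through the maximal ideals of $A$. The first step is to record, for any ideal $I\subseteq A$, the identity $V(\widetilde I)=\mathrm{supp}^{-1}(V(I))$, where $\mathrm{supp}\colon X\to\spec A$ sends a point $x$ to the support of the corresponding valuation and $V(I)\subseteq\spec A$ is the usual vanishing locus. Indeed, $\widetilde I$ is a coherent subsheaf of $\oh_X$ with $\widetilde I_x=I\oh_{X,x}$ and $\oh_X/\widetilde I\cong\widetilde{A/I}$ (using that rational localisations of $A$ are flat and that $M\mapsto\widetilde M$ is right exact); since $X$ is analytic, $\oh_{X,x}$ is a noetherian local ring whose maximal ideal contracts along $A\to\oh_{X,x}$ to $\mathrm{supp}(x)$, so $\widetilde I_x\neq\oh_{X,x}$ holds precisely when $I\subseteq\mathrm{supp}(x)$. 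Because $\mathrm{supp}(x)$ is prime, $V(\widetilde I)$ depends only on $\sqrt I$, which already yields the ``if'' direction.

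For the ``only if'' direction, assume $\mathrm{supp}^{-1}(V(I_1))=\mathrm{supp}^{-1}(V(I_2))$. Since $A$ is Jacobson, $\sqrt{I_j}=\bigcap\{\mathfrak m\in\mathrm{Max}(A):I_j\subseteq\mathfrak m\}$, so it suffices to show that the sets of maximal ideals containing $I_1$ and $I_2$ coincide. For this I claim that every maximal ideal $\mathfrak m$ of $A$ is of the form $\mathrm{supp}(x)$ for some $x\in X$: granting this, if $I_1\subseteq\mathfrak m=\mathrm{supp}(x)$ then $x\in\mathrm{supp}^{-1}(V(I_1))=\mathrm{supp}^{-1}(V(I_2))$, whence $I_2\subseteq\mathrm{supp}(x)=\mathfrak m$, and symmetrically.

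To construct a point over a maximal ideal $\mathfrak m$, note first that $\mathfrak m$ is closed: otherwise its closure would be all of $A$, so the dense set $\mathfrak m$ would meet the nonempty open coset consisting of $1$ plus a topologically nilpotent element, all of whose members are units of the complete Tate ring $A$, contradicting $\mathfrak m\subsetneq A$. Hence $L:=A/\mathfrak m$ is again a complete Tate ring; being a field, it is a nonarchimedean field, and the image of $A^+$ --- which consists of powerbounded elements --- lies in the valuation ring $\oh_L$. The rank-one valuation of $L$, pulled back along $A\to L$, is therefore a continuous valuation on $A$ bounded by $1$ on $A^+$ and with support $\mathfrak m$, i.e.\ a point of $X=\spa(A,A^+)$ lying over $\mathfrak m$. (Equivalently, one may invoke that the adic spectrum of a nonzero complete Tate ring is nonempty and pick any point of $\spa(L,\cdot)$, whose support is necessarily $(0)$.)

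The only step I expect to require more than formal bookkeeping is the input that a complete Tate ring which is a field is automatically a nonarchimedean field --- equivalently, the nonemptiness of the adic spectrum of a nonzero complete Tate ring; once this is available, everything else amounts to unwinding definitions. (In fact one can show directly that $\mathrm{supp}$ is surjective, by dominating a suitable localisation of $A^+$ by a valuation ring of its fraction field, which would make the Jacobson hypothesis unnecessary; but the argument above is the quickest route to the statement as given.)
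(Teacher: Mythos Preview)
The paper does not actually prove this proposition: immediately after its statement the author writes ``whose proof we leave to the reader.'' So there is no benchmark argument to compare against; your task was precisely to supply the omitted proof, and you have done so correctly.

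Your argument is the natural one. The identification $V(\widetilde I)=\mathrm{supp}^{-1}(V(I))$ via $\widetilde I_x=I\oh_{X,x}$ and the fact that $\mathfrak m_x$ contracts to $\mathrm{supp}(x)$ along $A\to\oh_{X,x}$ is exactly right, and immediately gives the ``if'' direction. For the converse, reducing to the statement that every maximal ideal of $A$ lies in the image of $\mathrm{supp}$, and then invoking the Jacobson property, is again the expected route.

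One small comment on the construction of a point over a given maximal ideal $\mathfrak m$. Your argument that $\mathfrak m$ is closed is fine. The subsequent claim that a complete Tate ring which is a field is automatically a nonarchimedean field is true but, as you rightly flag, is not entirely formal; it amounts to showing that $L^{\circ}$ is a valuation ring. Your parenthetical alternative --- form the quotient Huber pair $(L,L^+)$ with $L^+$ the integral closure of the image of $A^+$, and invoke the standard nonemptiness of $\spa(L,L^+)$ --- sidesteps this and is the cleaner way to finish. (Note that $A^+$ always contains a ring of definition, namely $A_0\cap A^+$ for any ring of definition $A_0$, so the image of $A^+$ in $L$ is open and $(L,L^+)$ is a genuine Huber pair.) Your final remark that surjectivity of $\mathrm{supp}$ onto all of $\spec A$ would make the Jacobson hypothesis superfluous is also correct.
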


\section{Jacobson adic spaces}

The aim of this section is to define a new class of analytic adic space containing a subset of distinguished points controlling several of its properties, very much like classical points of rigid spaces.

\begin{defn}
We say that a strongly noetherian complete Tate ring $A$ is a Jacobson-Tate ring if it satisfies the following properties:
\begin{enumerate}
\item Every residue field of $A$ is a complete non-archimedean field.
\item For every topologically of finite type $A$-algebra $B$, the induced map $\spec B \rightarrow \spec A$ respects maximal ideals.
\end{enumerate}	
\end{defn}

The reason behind the naming of these rings resides in \cite[Chapitre V §3.4, Théorème 3 (ii)]{BouAC}, which is actually an equivalence. Using these rings, we may now define the class of Jacobson adic spaces.

\begin{defn}
We say that $X$ is a Jacobson adic space if it is locally of the form $\spa(A, A^+)$, where $A$ is a Jacobson-Tate ring. We define its Jacobson-Gelfand spectrum $JG(X)$ as the subset of rank $1$ points $x \in X$ for which there is an open affinoid $\spa(A, A^+) \subseteq X$ with $A$ Jacobson-Tate such that $\text{supp}\, x \subseteq A$ is a maximal ideal.  
\end{defn}

\begin{prop}\label{propriedades de aneis de Jacobson-Tate}
Let $A$ be a Jacobson-Tate ring. Then the following hold:
\begin{enumerate}
\item Every topologically of finite type $A$-algebra $B$ is a Jacobson-Tate ring.
\item Let $B$ be a rational localisation of $A$. Then every maximal ideal $\nf \subseteq B $ is the extension of a unique maximal ideal $\mf \subseteq A$ and the natural map $\widehat{A}_{\mf} \rightarrow \widehat{B}_{\mf B}$ is an isomorphism. In particular, if $x \in X=\emph{\spa}(A, A^+)$ is the unique rank $1$ point supported at $\mf$, then the canonical map $\widehat{A}_{\mf} \rightarrow \widehat{\oh}_{X,x}$ is an isomorphism.
\item For every ring of integral elements $A^+ \subseteq A$, the support map $\emph{JG}(\emph{\spa}(A, A^+)) \rightarrow \emph{\spec} A$ is an embedding onto $\emph{Max}\,A $ and the inclusion $\emph{JG}(\emph{\spa}(A, A^+)) \rightarrow \emph{\spa}(A, A^+)$ is dense. In particular, $A$ is a Jacobson ring.
\end{enumerate}
\end{prop}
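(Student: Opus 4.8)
The plan is to prove the three parts in sequence, each using the ones before it. \emph{Part (1).} It is enough to check the two axioms for an arbitrary topologically of finite type $A$-algebra $B$. Given a topologically of finite type $B$-algebra $C$, which is also topologically of finite type over $A$, and a maximal ideal $\nf_C \subseteq C$, the second axiom for $A$ makes $\mf := \nf_C \cap A$ maximal, and $\kappa := A/\mf$ is a complete non-archimedean field by the first axiom. As $\mf$ is finitely generated, hence closed, $C/\mf C$ is an affinoid algebra over $\kappa$, so by the classical Nullstellensatz for Tate algebras over a complete non-archimedean field, $C/\nf_C$ is a finite extension of $\kappa$ --- in particular itself a complete non-archimedean field. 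Then $B/(\nf_C \cap B)$ is a $\kappa$-subalgebra of this finite extension, so it is a finite-dimensional $\kappa$-algebra that is a domain, hence a field, so $\nf_C \cap B$ is maximal; this is the second axiom for $B$, and taking $C = B$ gives the first axiom for $B$ (at maximal ideals).

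\emph{Part (2).} Write $X = \spa(A, A^+)$, let $V \subseteq X$ be the rational subset with $B = \oh_X(V)$, and recall that $A \to B$ is flat (a theorem of Huber) and, being a quotient of some $A\langle X_1, \dots, X_r\rangle$, topologically of finite type over $A$. If $\nf \subseteq B$ is maximal, part (1) makes $\mf := \nf \cap A$ maximal in $A$ and $\mf B \subseteq \nf$. Base-changing the rational localisation along $A \to \kappa := A/\mf$, which is legitimate because $\mf B$ is a finitely generated, hence closed, ideal of $B$, identifies $B/\mf B$ with the sections of $\oh$ over the corresponding rational subset of $\spa(\kappa, \kappa^+)$; that space is a single point, so $B/\mf B$ is $0$ or $\kappa$, and the former is impossible since $\mf B \subseteq \nf \subsetneq B$. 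Hence $B/\mf B = \kappa$, so $\mf B$ is maximal and equals $\nf$, and uniqueness of $\mf$ is clear from $\mf = \nf \cap A$. For the completions, $A_\mf \to B_\nf$ is a flat local homomorphism with $\mf B_\nf = \nf B_\nf$ and $A_\mf / \mf \xrightarrow{\sim} B_\nf / \nf$; flatness identifies $(\mf^n/\mf^{n+1}) \otimes_A B$ with $\mf^n B/\mf^{n+1} B$, and since $\mf^n/\mf^{n+1}$ is a $\kappa$-module this is just $\mf^n/\mf^{n+1}$, so an induction on $n$ along $0 \to \mf^n/\mf^{n+1} \to A/\mf^{n+1} \to A/\mf^n \to 0$ gives $A/\mf^n \xrightarrow{\sim} B/\mf^n B$ for every $n$, and hence $\widehat{A}_\mf \xrightarrow{\sim} \widehat{B}_{\mf B}$ in the limit. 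Running this over a cofinal system of rational neighbourhoods of $x$ and passing to the colimit (which commutes with localisation and with the quotients $-/\mf^n$) yields $\widehat{A}_\mf \xrightarrow{\sim} \widehat{\oh}_{X,x}$.

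\emph{Part (3).} I would first identify $JG(X)$ with the set of rank-one points $x$ for which $\text{supp}(x)$ is maximal in $A$: one inclusion takes the ambient affinoid to be $X$ itself, while for the other one shrinks the witnessing Jacobson--Tate affinoid to a rational subset $U \subseteq X$ and transports maximality of the support down to $A$ by applying part (2) to $A \to \oh_X(U)$ and to the map from the witness to $\oh_X(U)$. The support map then injects $JG(X)$ into $\spec A$, because a continuous rank-one valuation on the complete non-archimedean field $A/\mf$ bounded on the image of $A^+$ is unique up to equivalence, and its image is exactly $\text{Max}\,A$, because the canonical valuation of $A/\mf$ pulls back to a point of $X$. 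Density of $JG(X)$ in $X$ is then immediate: any nonempty rational $U \subseteq X$ has $\oh_X(U) \neq 0$ Jacobson--Tate by part (1), hence with a maximal ideal, and the corresponding point of $\spa(\oh_X(U), \oh_X^+(U))$ lies in $U \cap JG(X)$.

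It remains to deduce that $A$ is Jacobson. Every reduced quotient $R$ of $A$ is again a strongly noetherian complete Tate ring that is analytic and Jacobson--Tate, so it suffices to see that $\text{Jac}(R) = 0$. If $0 \neq f$ lies in every maximal ideal of $R$, then --- the supports of $JG$-points exhausting $\text{Max}\,R$ --- $f$ vanishes on the dense subset $JG(\spa R)$; as $\{\,y : f(y) = 0\,\} = \text{supp}^{-1}(V(f))$ is closed in $\spa R$, the function $f$ vanishes on all of $\spa R$, i.e.\ $f \in \text{supp}(y)$ for every $y$. But the support map of a reduced analytic Huber ring meets every minimal prime --- for the domain $R/\qf$ with ring of definition $R_0$, the element $\varpi^{-1}$ fails to be integral over $R_0$ by $\varpi$-adic separatedness, so there is a rank-one valuation of $\mathrm{Frac}(R/\qf)$ nonnegative on $R_0$ with $\varpi$ in its maximal ideal, and it has support $(0)$; cf.\ \cite{Abb} --- so $f$ lies in $\bigcap_{y \in \spa R} \text{supp}(y) = 0$, a contradiction. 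Hence $A$ is Jacobson. The main obstacle is precisely this last input: the Jacobson--Tate axioms only constrain \emph{topologically} finite type algebras, so descending to honest module-finiteness and the Zariski topology must be routed through the adic spectrum and ultimately hinges on surjectivity (or Zariski-density) of the support map --- the one ingredient drawn from analytic geometry rather than from pure commutative algebra.
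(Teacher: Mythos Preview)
Your proof is correct and follows essentially the same route as the paper's: reduce Part~(1) to the Nullstellensatz for affinoid algebras over a complete non-archimedean field, deduce Part~(2) from the fact that any nonzero rational localisation of such a field is trivial together with flatness of $A\to B$ and the inductive passage to $A/\mf^n\cong B/\mf^n B$, and obtain Part~(3) from the first two. The only difference is one of explicitness: where the paper's one-line justification of Jacobsonness (``the adic topology is finer than the Zariski topology'') leaves implicit that the support map $\spa R\to\spec R$ has Zariski-dense image, you spell this out by exhibiting a continuous rank-one valuation on each $R/\qf$ with $\qf$ minimal.
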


\begin{proof}
For the first claim, let $\nf \subseteq B$ be a maximal ideal. This pulls back to a maximal ideal $\mf$ of $A$, whose quotient is a complete non-archimedean field. Thus, we may assume that $A$ is a complete non-archimedean field and $B/\nf$ must be a field, by the Noether normalisation lemma for affinoid algebras. One checks similarly the second condition for $B$ being a Jacobson-Tate ring.
	
Since rational localisations are topologically of finite type homomorphisms, it is clear that $\nf$ lies over a maximal ideal $\mf \subseteq A$. But if $A=K$ is a complete non-archimedean field, any nonzero rational localisation is isomorphic to $K$. This shows that $\nf=\mf B$. As for the isomorphy claim, one shows that $A/\mf^n \rightarrow B/\mf^nB$ is an isomorphism by induction, using flatness of $ A \rightarrow B$ (cf. \cite[Proposition 3.3.16]{BSHuber}).

The last claim follows immediately from the first and the second claims. As for $A$ being Jacobson, it suffices to show density of the maximal spectrum inside the spectrum of every homomorphic image of $A$, but the adic topology is finer than the Zariski topology.
\end{proof}

Before developing some more theory around Jacobson adic spaces, let us give a sufficient criterion for an analytic adic space to be Jacobson. We need to introduce some material from \cite{Abb}.

\begin{defn}
A complete noetherian adic ring $R$ is called a valuative order if it is local, integral and one-dimensional.  
\end{defn}

The importance of these rings is revealed by the following result.

\begin{prop}\label{caracterizacao de ordens 1-valuativas}\cite[Proposition 1.11.8]{Abb}
Let $R$ be a complete noetherian adic ring, $X=\emph{\spec} R-V(R^{\circ \circ})$ and $\pf \subseteq R$ a prime ideal. The following are equivalent:
\begin{enumerate}
\item $R/\pf$ is a valuative order.
\item $\pf \nsubseteq R^{\circ \circ}$ and $\dim(R/\pf)=1$.
\item $\pf$ determines a closed point of $X$.
\end{enumerate}
\end{prop}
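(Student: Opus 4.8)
The plan is to reduce at once to the quotient domain $\bar R:=R/\pf$ and to reformulate all three statements in terms of it. I would first record two elementary facts about a complete noetherian adic ring $R$ with finitely generated ideal of definition $I$: one has $R^{\circ\circ}=\sqrt I$, since $x$ is topologically nilpotent exactly when some power of $x$ lies in $I$, so $R^{\circ\circ}$ is an ideal and $V(R^{\circ\circ})=V(I)$ is closed; and $1+I\subseteq R^{\times}$ by completeness, so $R^{\circ\circ}=\sqrt I$ is contained in the Jacobson radical of $R$, hence every maximal ideal of $R$ lies in $V(R^{\circ\circ})$ and no point of $X$ is closed in $\spec R$ — this is why the relevant dimension is $1$ and not $0$. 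Now $\bar R$ is again a complete noetherian adic ring and a domain, with ideal of definition $\bar I=I\bar R$ and $\bar R^{\circ\circ}=\sqrt{\bar I}$. Unwinding the definitions: $(1)$ asserts that $\bar R$ is local, one-dimensional and $\bar R^{\circ\circ}\neq 0$, the last being what makes the quotient topology the $\mf_{\bar R}$-adic one, as is required of a valuative order; $(2)$ asserts that $\dim\bar R=1$ and $\bar R^{\circ\circ}\neq 0$ (the hypothesis on $\pf$ amounting to $\pf\in X$, i.e. $\bar R^{\circ\circ}\neq 0$); and $(3)$ asserts that $\bar R^{\circ\circ}\neq 0$ and every nonzero prime of $\bar R$ contains $\bar R^{\circ\circ}$, i.e. $\spec\bar R-V(\bar R^{\circ\circ})=\{(0)\}$. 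Since $\bar R^{\circ\circ}\neq 0$ is common to all three, it suffices to prove that, for a complete noetherian adic domain $S$ with $S^{\circ\circ}\neq 0$, the conditions $(\mathrm i)$ "$S$ is local of dimension $1$", $(\mathrm{ii})$ "$\dim S=1$" and $(\mathrm{iii})$ "every nonzero prime of $S$ contains $S^{\circ\circ}$" are equivalent.

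I would establish this through the cycle $(\mathrm i)\Rightarrow(\mathrm{iii})\Rightarrow(\mathrm{ii})\Rightarrow(\mathrm i)$. For $(\mathrm i)\Rightarrow(\mathrm{iii})$: in a one-dimensional local domain the maximal ideal $\mf_S$ is the unique nonzero prime, and since the ideal of definition of $S$ is a nonzero ideal (as $S^{\circ\circ}\neq 0$ and $S$ is reduced) contained in $\mf_S$, it is $\mf_S$-primary, so $S^{\circ\circ}=\mf_S$ and $\spec S-V(S^{\circ\circ})=\{(0)\}$. For $(\mathrm{iii})\Rightarrow(\mathrm{ii})$ — the heart of the matter — choose a nonzero $g\in S^{\circ\circ}$; by $(\mathrm{iii})$ it lies in every nonzero prime of $S$. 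The minimal primes over $(g)$ are finite in number by noetherianity and each has height exactly one (at most $1$ by Krull's principal ideal theorem, at least $1$ since $g\neq 0$ in a domain); as every nonzero prime of $S$ contains $(g)$, hence one of these minimal primes, they are all the height-one primes of $S$, so $S$ has only finitely many height-one primes. Were $\dim S\geq 2$, localizing at a prime of height $\geq 2$ would give a noetherian local domain of dimension $\geq 2$ with only finitely many height-one primes (those coming from $S$), which is impossible: its maximal ideal, of height $\geq 2$, is not contained in the union of those finitely many primes, so contains a nonzero non-unit avoiding all of them, a minimal prime divisor of which is then a height-one prime not among them. Hence $\dim S\leq 1$, and $\dim S\geq 1$ because $S^{\circ\circ}\neq 0$ prevents $S$ from being a field. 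Finally, for $(\mathrm{ii})\Rightarrow(\mathrm i)$ I would use completeness: since $\dim S=1$ and the ideal of definition is nonzero, $S$ modulo that ideal is artinian, hence a finite product of local rings; idempotents lift uniquely along the surjection onto it, $S$ being complete with kernel inside the Jacobson radical, so $S$ is a finite product of local rings and, being a domain, is local.

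The step I expect to take the most care is $(\mathrm{iii})\Rightarrow(\mathrm{ii})$, where the hypothesis "every nonzero prime contains $S^{\circ\circ}$" must be exploited to identify the finite set of height-one primes of $S$ — not merely to bound their heights — before invoking prime avoidance in a localization at a hypothetical prime of height $\geq 2$. The other ingredients are routine: the identity $R^{\circ\circ}=\sqrt I$, the containment of the ideal of definition in the Jacobson radical of a complete adic ring, the lifting of idempotents along complete surjections with kernel in the Jacobson radical, and the careful but mechanical descent of the three conditions to $\bar R$.
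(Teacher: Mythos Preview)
The paper does not supply its own proof of this proposition; it is simply quoted from \cite[Proposition 1.11.8]{Abb}. So there is no argument in the paper to compare against, and your self-contained proof is a welcome addition. The reduction to the quotient domain $\bar R=R/\pf$ and the cycle $(\mathrm i)\Rightarrow(\mathrm{iii})\Rightarrow(\mathrm{ii})\Rightarrow(\mathrm i)$ are correct; the idempotent-lifting step for $(\mathrm{ii})\Rightarrow(\mathrm i)$ is clean, and the prime-avoidance argument for $(\mathrm{iii})\Rightarrow(\mathrm{ii})$ is the standard way to see that a noetherian domain in which a fixed nonzero element lies in every nonzero prime has Krull dimension at most one.

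There is, however, one genuine slip in your translation step. Condition (2) as written in the paper is $\pf\nsubseteq R^{\circ\circ}$, and you assert this ``amounts to $\pf\in X$, i.e.\ $\bar R^{\circ\circ}\neq 0$''. But $\pf\in X$ means $R^{\circ\circ}\nsubseteq\pf$, which is a different condition. Take $R=k[[x,y]]$ with ideal of definition $(x,y)$ and $\pf=(x)$: then $R/\pf\cong k[[y]]$ is a valuative order and $\pf$ is a closed point of $X=\spec R\setminus\{(x,y)\}$, yet $\pf=(x)\subseteq (x,y)=R^{\circ\circ}$, so the literal condition (2) fails. Thus the equivalence is false as printed; the paper's $\pf\nsubseteq R^{\circ\circ}$ is evidently a typo for $R^{\circ\circ}\nsubseteq\pf$ (equivalently $\pf\in X$), and what you have actually proved is the corrected statement. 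You should say this explicitly rather than silently swap the two conditions. With that caveat, your argument is complete.
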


\begin{prop}\label{classe chave de exemplos de aneis de Jacobson-Tate}
Assume $A$ has a noetherian ring of definition $A_0 \subseteq A$ whose reduction $A_0/A^{\circ \circ}$ is a Jacobson ring. Then, $A$ is a Jacobson-Tate ring.
\end{prop}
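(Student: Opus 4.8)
The plan is to verify the two defining conditions of a Jacobson--Tate ring for $A$, the engine being a dictionary with the ring of definition. Fix a topologically nilpotent unit $\varpi\in A_0$, so that $A=A_0[1/\varpi]$ and $\varpi\in A_0^{\circ\circ}$. Since $A_0$ is adically complete, $1-x$ is invertible for every topologically nilpotent $x$, so $A_0^{\circ\circ}$ — the radical of an ideal of definition — lies in the Jacobson radical of $A_0$. Now a maximal ideal $\mf$ of $A=A_0[1/\varpi]$ is the extension of $\qf:=\mf\cap A_0$, one has $\varpi\notin\qf$, and $A/\mf=(A_0/\qf)[1/\varpi]$ is a field; since $A_0/\qf$ is a noetherian domain with $\varpi$ a nonzero element of its Jacobson radical, this forces every nonzero prime of $A_0/\qf$ to contain $\varpi$, and a short argument with Krull dimension shows that this happens exactly when $A_0/\qf$ is local of dimension one, i.e.\ a valuative order (if the dimension were $\geq 2$ there would be infinitely many height one primes, not all containing $\varpi$; if it were $0$ the quotient would be a field with a nonzero topologically nilpotent element; once the dimension is one, $\varpi$-adic completeness upgrades ``semilocal'' to ``local''; conversely every such quotient with $\varpi$ nonzero gives a maximal ideal of $A$). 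Together with Proposition~\ref{caracterizacao de ordens 1-valuativas}, this identifies the maximal ideals of $A$ with the closed points of $\spec A_0-V(A_0^{\circ\circ})$ lying in the open subscheme $\spec A=D(\varpi)$.

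With this dictionary the first condition is immediate. For $\mf\subseteq A$ maximal, $R_0:=A_0/\qf$ is a one-dimensional complete local noetherian domain, hence Nagata, so its normalisation $\widetilde{R_0}$ is a finite $R_0$-module; being a complete semilocal Dedekind domain it is a finite product of complete discrete valuation rings, and being a domain it is a single complete DVR. Therefore $A/\mf=(A_0/\qf)[1/\varpi]=\operatorname{Frac}(R_0)=\operatorname{Frac}(\widetilde{R_0})$, and since the subspace ($\varpi$-adic) topology on $R_0$ agrees with the valuation topology on $\widetilde{R_0}$, the residue field $A/\mf$ is a complete, in fact discretely valued, nonarchimedean field.

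For the second condition I would first record that the class of rings considered here is stable under topologically finite type extensions and under quotients by closed ideals: $A_0\langle X_1,\dots,X_n\rangle$ is a noetherian ring of definition of $A\langle X_1,\dots,X_n\rangle$, its reduction modulo topologically nilpotents is $(A_0/A_0^{\circ\circ})[X_1,\dots,X_n]$, which is Jacobson by the Nullstellensatz for polynomial rings over Jacobson rings, and reductions of rings of definition obviously descend to closed quotients. It therefore suffices to prove that maximal ideals of $C:=A\langle X_1,\dots,X_n\rangle$ contract to maximal ideals of $A$. Translating through the dictionary, applied to $A$ and to $C$ (with ring of definition $C_0=A_0\langle X_1,\dots,X_n\rangle$), this amounts to showing that the morphism $\spec C_0-V(C_0^{\circ\circ})\to\spec A_0-V(A_0^{\circ\circ})$ sends closed points to closed points. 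I would deduce this by passing from a given closed point — i.e.\ from a valuative order quotient $V$ of $C_0$, with $L:=\operatorname{Frac}V$ a complete discretely valued field — to a Noether normalisation of $L$ relative to $A$: a Weierstrass change of variables makes $L$ module-finite over $A\langle X_1,\dots,X_{n-1}\rangle/(\text{a maximal ideal})$, and induction on the number of variables makes $L$ module-finite over $A/\mf$, where $\mf$ is the contracted ideal; a domain over which a field is finite is itself a field, so $\mf$ is maximal. Finally, the same circle of ideas at the level of $\spec A_0$ shows that $A$ is a Jacobson ring (part~(3) of Proposition~\ref{propriedades de aneis de Jacobson-Tate}): using that $A_0/A_0^{\circ\circ}$ is Jacobson one shows the closed points of $\spec A_0-V(A_0^{\circ\circ})$ are very dense — they meet every nonempty locally closed subset — by a dimension-shifting argument: pass to $A_0/\pf$ for an irreducible closed subset, choose a maximal ideal of the Jacobson reduction avoiding a prescribed function, and drop to a coheight-one prime not containing the ideal of definition.

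The step I expect to be the real obstacle is exactly this transport of ``maximal ideal'' along $A\to A\langle X_1,\dots,X_n\rangle$: the morphism $\spec C_0-V(C_0^{\circ\circ})\to\spec A_0-V(A_0^{\circ\circ})$ is \emph{not} of finite type in the algebraic sense — over a standard open like $D(\varpi)$ it is $\spec\big(A_0\langle X_1,\dots,X_n\rangle[1/\varpi]\big)\to\spec\big(A_0[1/\varpi]\big)$, and restricted power series are not polynomials — so one cannot simply invoke ``finite type morphisms between Jacobson schemes preserve closed points'' and must instead feed in a Weierstrass/Noether-normalisation statement (essentially the relative normalisation result of this paper) or the corresponding part of Abbes' machinery. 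Once that input is available, everything else — the reductions, the first condition, and the Jacobsonness of $A$ — is formal given Proposition~\ref{caracterizacao de ordens 1-valuativas}.
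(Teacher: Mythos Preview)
Your treatment of condition (1) is correct and more explicit than the paper's, which simply cites \cite[Proposition 2.2.10]{BSHuber} and notes that only noetherianness of $A_0$ is used. Your dictionary between maximal ideals of $A$ and valuative-order quotients of $A_0$ is exactly the right translation layer, and your normalisation argument for completeness of the residue field is sound.

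For condition (2) the paper's argument is substantially shorter and sidesteps your acknowledged obstacle. Instead of attempting a relative Weierstrass preparation --- which, as you correctly flag, is not available over a general Tate base and is \emph{not} what Proposition~\ref{Noether levantada do chao} supplies (that result lifts a Noether normalisation from the special fibre, a different statement) --- the paper works entirely at the level of rings of definition. One chooses a noetherian ring of definition $B_0 \subseteq B$ topologically finitely generated over $A_0$, uses Proposition~\ref{caracterizacao de ordens 1-valuativas} to rephrase the claim as ``$\spec B_0 \to \spec A_0$ preserves valuative-order points,'' and then invokes \cite[Proposition 1.11.2]{Abb}: any topologically finite type homomorphism from a complete noetherian adic ring into a valuative order is automatically \emph{finite}. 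A finite injection into a one-dimensional complete local domain forces the source to be one-dimensional, integral and local as well, and one is done. This is precisely the ``Abbes machinery'' you allude to in your last paragraph; the paper simply commits to it rather than to the Weierstrass route, so no inductive normalisation is needed at all.

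Your closing discussion of why $A$ itself is a Jacobson ring is superfluous here: that is part (3) of Proposition~\ref{propriedades de aneis de Jacobson-Tate}, already deduced formally from the Jacobson--Tate property, so it need not be reproved inside this proposition.
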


\begin{proof}
The fact that every homomorphic field of $A$ is non-archimedean can be found in \cite[Proposition 2.2.10]{BSHuber} and it only uses noetherianness of $A_0$. For the second property, we let $B$ be a topologically of finite type $A$-algebra and choose a noetherian ring of definition $B_0$ of $B$ which is topologically finitely generated over $A_0$. Appealing to Proposition \ref{caracterizacao de ordens 1-valuativas}, we simply have to show that the map $\spec B_0 \rightarrow \spec A_0$ respect prime ideals whose residue ring is a valuative order. Without loss of generality, assume that $B_0$ is a valuative order and $A_0$ injects in the former ring. Now \cite[Proposition 1.11.2]{Abb} yields finiteness of $A_0 \rightarrow B_0$, so $A_0$ must be local, integral and $1$-dimensional.
\end{proof}   

To conclude this section, we will now transfer some ring-theoretic quantities and absolute properties to the world of Jacobson adic spaces.

\begin{defn}
Let $X$ be a Jacobson adic space. Its Krull dimension $\text{dim.Kr}\,X$ is the quantity $\sup_{x \in \text{JG}(X)} \text{dim}\,\widehat{\oh}_{X,x}$.
\end{defn}

\begin{prop}
Let $(A, A^+)$ be a Jacobson-Tate pair and let $X$ be the corresponding adic space. Then, the equality $\emph{dim.Kr} X=\emph{dim} A$ holds.
\end{prop}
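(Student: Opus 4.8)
The plan is to unwind the definition of $\text{dim.Kr}\, X$ with the help of Proposition \ref{propriedades de aneis de Jacobson-Tate}, thereby reducing the claim to a routine fact about Krull dimension in commutative algebra. First, Proposition \ref{propriedades de aneis de Jacobson-Tate}(3) tells us that the support map restricts to a bijection $\text{JG}(X) \xrightarrow{\sim} \text{Max}\, A$, so that each point $x \in \text{JG}(X)$ is the unique rank $1$ point supported at some maximal ideal $\mf_x \subseteq A$, and every maximal ideal arises in this way. For such an $x$, Proposition \ref{propriedades de aneis de Jacobson-Tate}(2) supplies a canonical isomorphism $\widehat{A}_{\mf_x} \xrightarrow{\sim} \widehat{\oh}_{X,x}$. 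Putting these together,
\[
\text{dim.Kr}\, X \;=\; \sup_{x \in \text{JG}(X)} \dimn \widehat{\oh}_{X,x} \;=\; \sup_{\mf \in \text{Max}\, A} \dimn \widehat{A}_{\mf}.
\]

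Next I would pass from the completed local rings back to $A$ itself. Since $A$ is strongly noetherian it is in particular noetherian, hence each localisation $A_{\mf}$ is a noetherian local ring whose $\mf$-adic completion has the same Krull dimension; thus $\dimn \widehat{A}_{\mf} = \dimn A_{\mf} = \height \mf$ for every $\mf \in \text{Max}\, A$. Finally, every chain of prime ideals of $A$ may be prolonged so that its largest term is a maximal ideal, without shortening it, whence $\sup_{\mf \in \text{Max}\, A} \height\mf = \sup_{\pf \in \spec A} \height\pf = \dimn A$. Chaining the displayed equalities with these identities yields $\text{dim.Kr}\, X = \dimn A$; the case of infinite dimension is harmless, both sides then being $\infty$.

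I do not expect a genuine obstacle here: the subtle, adic-theoretic content — that $\text{JG}(X)$ detects exactly the maximal ideals of $A$, and that the completed stalk of $\oh_X$ at such a point is $\widehat{A}_{\mf}$ — is precisely what Proposition \ref{propriedades de aneis de Jacobson-Tate} was built to provide, and it is here that the Jacobson-Tate hypothesis really enters (without it $\text{JG}(X)$ need not surject onto $\text{Max}\, A$). After this reduction, the only points requiring any care are the appeal to strong noetherianness in order to compare $\widehat{A}_{\mf}$ with $A_{\mf}$, and the elementary observation that the Krull dimension of a ring equals the supremum of the heights of its maximal ideals.
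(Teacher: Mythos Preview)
Your proof is correct and follows exactly the route the paper intends: the paper's own proof is the one-liner ``Use Proposition \ref{propriedades de aneis de Jacobson-Tate}, part 2,'' and your argument is precisely the unpacking of that sentence, invoking parts (2) and (3) to identify $\text{JG}(X)$ with $\text{Max}\,A$ and $\widehat{\oh}_{X,x}$ with $\widehat{A}_{\mf}$, then finishing with the standard identities $\dim \widehat{A}_{\mf}=\height\mf$ and $\dim A=\sup_{\mf}\height\mf$.
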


\begin{proof}
Use Proposition \ref{propriedades de aneis de Jacobson-Tate}, part $2$.
\end{proof}

\begin{rem}
When $A^+ \neq A^{\circ}$, the Krull dimension may in general differ from the spectral dimension of $\spa(A, A^+)$ (e.g., take $A$ to be a complete non-archimedean field). If $A^+=A^{\circ}$, equality holds for affinoid algebras by \cite[Lemma 1.8.6 (ii) and Corollary 1.8.8]{EtHuber} and for pseudoaffinoid algebras, as we will see later on, but we do not know whether this is true in general.
\end{rem}

\begin{defn}
Let $\mathcal{I}$ be a coherent sheaf of ideals on a Jacobson adic space $X$. We say that $\mathcal{I}$ has height $d$ at $x\in\text{JG}(X)$ if $\text{ht}\,\widehat{\mathcal{I}}_x=d$, where $\widehat{\mathcal{I}}_x$ is the maximal-adic completion of $\mathcal{I}_x \subseteq \oh_{X,x}$. The height of $\mathcal{I}$ is the quantity $\text{ht}\,\mathcal{I}=\sup_{x \in \text{JG}(X)} \text{ht}\,\widehat{\mathcal{I}}_x \in \ZZ_{\geq 0} \cup \{-\infty\}$. 
\end{defn}

\begin{prop}
Let $(A, A^+)$ be a Jacobson-Tate pair and $X=\emph{\spa}(A, A^+)$ be the corresponding adic space. Given an ideal $I \subseteq A$, the equality $\emph{ht}\,I =\emph{ht}\,\widetilde{I}$ holds.
\end{prop}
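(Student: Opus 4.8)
The plan is to translate the assertion about the coherent sheaf $\widetilde{I}$ into a statement about the Noetherian ring $A$ and then reduce it to standard commutative algebra. First I would unwind the right-hand side. By Theorem~\ref{modulos coerentes sobre afinoides sao induzidos por modulos sobre as seccoes globais} the sheaf $\widetilde{I}$ is exactly the coherent ideal sheaf attached to $I$, and since rational localisations are flat, passing to stalks gives $\widetilde{I}_x = I\oh_{X,x}\subseteq\oh_{X,x}$ for every $x\in X$. By Proposition~\ref{propriedades de aneis de Jacobson-Tate}(3) the support map identifies $\mathrm{JG}(X)$ with $\mathrm{Max}\,A$, and by part~(2) of the same proposition, if $x$ is the rank-$1$ point supported at $\mf\in\mathrm{Max}\,A$ then $\widehat{A}_{\mf}\to\widehat{\oh}_{X,x}$ is an isomorphism. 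Hence the maximal-adic completion of $\widetilde{I}_x$ is $I\widehat{A}_{\mf}$, and
\[
\text{ht}\,\widetilde{I} \;=\; \sup_{\mf\in\mathrm{Max}\,A}\,\text{ht}\big(I\widehat{A}_{\mf}\big),
\]
the supremum effectively ranging over those $\mf$ containing $I$.

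Next I would dispose of the completions. For a Noetherian local ring $(R,\mf_R)$ and an ideal $J\subseteq R$ one has $\text{ht}(J\widehat{R})=\text{ht}(J)$: faithful flatness of $R\to\widehat{R}$ gives $\text{ht}(J\widehat{R})\ge\text{ht}(J)$, while choosing a prime $\pf$ minimal over $J$ with $\text{ht}\,\pf=\text{ht}\,J$ and a prime $\qf\subseteq\widehat{R}$ minimal over $\pf\widehat{R}$, the going-down theorem forces $\qf\cap R=\pf$, and the dimension formula for the flat local map $R_{\pf}\to\widehat{R}_{\qf}$ yields $\text{ht}\,\qf=\text{ht}\,\pf$ because the closed fibre $\widehat{R}_{\qf}/\pf\widehat{R}_{\qf}$ is Artinian. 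Applying this with $R=A_{\mf}$ and $J=IA_{\mf}$ turns the display into $\text{ht}\,\widetilde{I}=\sup_{\mf\supseteq I}\text{ht}(IA_{\mf})$.

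It remains to identify $\sup_{\mf\supseteq I}\text{ht}(IA_{\mf})$ with the Krull height $\text{ht}_A\,I$, and this is where I expect the real work to lie. Since $\text{ht}(IA_{\mf})=\min\{\text{ht}\,\pf:\ I\subseteq\pf\subseteq\mf\}$, the inequality $\text{ht}_A\,I\le\text{ht}(IA_{\mf})$ is automatic, and the content is the reverse inequality — that the local heights at closed points do not overshoot the global one. Here I would bring in the hypothesis that $A$ is a Jacobson ring (Proposition~\ref{propriedades de aneis de Jacobson-Tate}(3)), using that maximal ideals are dense in $V(I)$ and in each of its irreducible components in order to realise a minimal prime over $I$ of minimal height below a prescribed maximal ideal. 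The main obstacle is precisely this comparison step — making the supremum over $\mathrm{Max}\,A$ recover the Krull height rather than a larger, locally non-equidimensional value — and it is the part I would scrutinise and write out in full detail, the first two steps being essentially formal once Proposition~\ref{propriedades de aneis de Jacobson-Tate} is available.
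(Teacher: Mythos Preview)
Your first two steps are precisely the paper's argument, only traversed in the reverse direction: the paper's proof consists of the observation that $\height I$ equals the supremum of $\text{ht}_{\widehat{A}_{\mf}}\widehat{I}_{\mf}$ over $\mf\in\text{Max}\,A$, combined with the identification $\widehat{A}_{\mf}\cong\widehat{\oh}_{X,x}$ carrying $\widehat{I}_{\mf}$ to $\widehat{\mathcal{I}}_x$ from Proposition~\ref{propriedades de aneis de Jacobson-Tate}(2). Your explicit removal of completions is correct and simply makes visible what the paper's ``observe'' uses silently.

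The difficulty you flag in the final paragraph, however, is genuine and cannot be repaired by invoking the Jacobson property. Taking the $\sup$ in the definition of $\height\mathcal{I}$ at face value, the proposition is false: in $A=K\langle x,y\rangle$ with $I=(x)\cap(x-1,y)$ one has $\height I=1$, yet at the maximal ideal $\mf=(x-1,y)$ the only minimal prime over $I$ contained in $\mf$ is $\mf$ itself, so $\height(IA_{\mf})=2$. The paper's own usage shows that $\inf$ is intended throughout: in the proof of Lemma~\ref{provar cenas en passant finito} the hypothesis ``$\height J\ge 2$'' is used to conclude $J\not\subseteq\qf$ for \emph{every} height-$1$ prime $\qf$, and the codimension bounds in the Hebbarkeitss\"atze must constrain the local height at every point of $Z$, not merely at one. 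Once you read $\inf$ in place of $\sup$, your third step becomes immediate and needs no Jacobson input: for a minimal prime $\pf\supseteq I$ with $\height\pf=\height I$, any maximal ideal $\mf\supseteq\pf$ satisfies $\height(IA_{\mf})\le\height\pf$, and combined with the automatic inequality $\height I\le\height(IA_{\mf})$ this identifies the infimum with $\height I$.
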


\begin{proof}
First we observe that the height of $I$ can be computed by taking the supremum of $\text{ht}_{\widehat{A}_{\mf}} \widehat{I}_{\mf}$ over all maximal ideals $\mf \subseteq A$. Then use the isomorphism $\widehat{A}_{\mf} \cong \widehat{\oh}_{X,x}$, where $x \in \text{JG}(X)$ corresponds to $\mf$, and notice that it identifies $\widehat{I}_{\mf}$ with $\widehat{\mathcal{I}}_x$, where $\mathcal{I}=\widetilde{I}$.
\end{proof}

Appealing to Proposition \ref{quando os ideais definem o mesmo fechado}, we may now extend these notions to Zariski closed subsets.

\begin{defn}
Let $X$ be a Jacobson adic space and $Z \subseteq X$ a Zariski closed subset defined by a sheaf of coherent ideals $\mathcal{I}$. We say that $Z$ has codimension $d$ at $z \in JG(Z)$ if $\mathcal{I}$ has height $d$ at $z$ and we define its codimension as $\text{codim}_X Z=\text{ht}\, \mathcal{I}$.
\end{defn}

Next we show how to extend an absolute property of rings to the class of Jacobson adic spaces.

\begin{defn}
Let $P$ be a property of rings. We say that a Jacobson adic space $X$ has property $P$ if, for all $x \in \text{JG}(X)$, $\widehat{\oh}_{X,x}$ satisfies property $P$.
\end{defn}

A priori, this may not be such a useful notion, for we lack an understanding of whether affinoid sections satisfy $P$ or not. In our case, this will not be a concern.

\begin{prop}
Let $(A, A^+)$ be a Jacobson-Tate pair, $X$ denote the corresponding adic space and $P\in \{\emph{reduced, normal, regular}\}$ be a property of rings. If $X$ satisfies $P$, then so does $A$. If $A$ is an excellent ring, the converse holds.
\end{prop}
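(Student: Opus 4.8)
The plan is to prove the two implications separately, with the forward direction being elementary and the converse being the substantial one.

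For the forward implication, suppose $X$ satisfies $P \in \{\text{reduced}, \text{normal}, \text{regular}\}$, so $\widehat{\oh}_{X,x}$ has $P$ for every $x \in \text{JG}(X)$. By Proposition \ref{propriedades de aneis de Jacobson-Tate}(2), for the rank $1$ point $x$ supported at a maximal ideal $\mf \subseteq A$ we have $\widehat{A}_{\mf} \cong \widehat{\oh}_{X,x}$, so $\widehat{A}_{\mf}$ has $P$. Now I would invoke the standard descent of these properties along the faithfully flat map $A_{\mf} \to \widehat{A}_{\mf}$ (completion of a noetherian local ring): if $\widehat{A}_{\mf}$ is reduced (resp. normal, resp. regular), then so is $A_{\mf}$. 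Since, by Proposition \ref{propriedades de aneis de Jacobson-Tate}(3), $A$ is a Jacobson ring and every maximal ideal of $A$ arises this way, and since each of reduced, normal, regular is a local property that can be checked at maximal ideals of a noetherian Jacobson ring (reducedness and normality directly; regularity because a noetherian ring all of whose localisations at maximal ideals are regular is regular), we conclude $A$ has $P$.

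For the converse, assume in addition that $A$ is excellent and has $P$; I must show $\widehat{\oh}_{X,x} \cong \widehat{A}_{\mf}$ has $P$ for every $x \in \text{JG}(X)$, i.e. for every maximal ideal $\mf$ of $A$. Since $A$ has $P$, so does the localisation $A_{\mf}$. Excellence of $A$ means in particular that $A$ is a G-ring, hence the completion map $A_{\mf} \to \widehat{A}_{\mf}$ is a regular morphism (geometrically regular fibres). Each of reduced, normal, regular ascends along a regular morphism of noetherian rings: this is exactly the content of the relevant permanence theorems for excellent/G-rings (for normality one also uses Serre's criterion together with the fact that $A_\mf$ being normal and the morphism being regular forces $\widehat{A}_\mf$ to satisfy $R_1$ and $S_2$; for regularity one uses that a regular morphism with regular base has regular total space). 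This gives $P$ for $\widehat{A}_{\mf}$, as desired.

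The main obstacle is the converse direction, and within it the honest bookkeeping around which permanence theorem applies to which property: one must be careful that "$A$ excellent" is being used precisely as "$A$ is a noetherian G-ring" (so that $A_\mf \to \widehat{A}_\mf$ is geometrically regular), and then cite the correct ascent statement for each of the three properties — reducedness and regularity ascend along regular morphisms essentially formally, while normality requires passing through Serre's $(R_1)+(S_2)$ characterisation. I do not expect any genuinely new argument to be needed; the proposition is a packaging of standard excellence/completion facts once Proposition \ref{propriedades de aneis de Jacobson-Tate} is in hand.
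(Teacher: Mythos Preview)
Your proposal is correct and follows essentially the same route as the paper: identify $\widehat{\oh}_{X,x}$ with $\widehat{A}_{\mf}$ via Proposition \ref{propriedades de aneis de Jacobson-Tate}(2), then use standard descent/ascent of $P$ along the completion map $A_{\mf}\to\widehat{A}_{\mf}$, invoking excellence (the G-ring property) only for the ascent direction. The paper simply cites the relevant results in Liu and EGA~IV (one reference per property) rather than spelling out the faithfully flat descent and regular-morphism ascent as you do; your version is just an unpacking of those citations.
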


\begin{proof}
This follows from a direct application of \cite[Proposition 4.3.8]{Liu}, \cite[Corollaire 6.5.4]{EGAIV} and \cite[Corollaire 6.5.2]{EGAIV}, respectively.
\end{proof}

\begin{rem}
The statement holds more generally if one assumes that the formal fibres of $A$ are regular and $P$ is a local property for regular maps. Other examples of such properties can be found in \cite{EGAIV}.
\end{rem}

\section{Pseudorigid spaces}

In this section, we introduce the notion of pseudorigid spaces, which first appeared in \cite{JN17} under the same name, but was also developed independently by the author during this work.
  
\begin{defn}\label{definicao das algebras pseudo-afinoides}
A pseudoaffinoid\footnote{In \cite{JN17}, these are called Tate formally of finite type $\oh_K$-algebras, but we prefer to avoid this terminology.} $\oh_K$-algebra is a complete Tate $\oh_K$-algebra $A$ having a noetherian ring of definition $A_0$, which is formally of finite type over $\oh_K$. A homomorphism of pseudoaffinoid $\oh_K$-algebras $A$ and $B$ is a continuous $\oh_K$-homomorphism $\phi:A \rightarrow B$.
\end{defn}

We will need the following results on homomorphisms of pseudoaffinoid $\oh_K$-algebras.

\begin{lem}\label{mapas de algebras pseudoafinoides sao topologicamente de tipo finito}
Let $A$ be a pseudoaffinoid $\oh_K$-algebra.
\begin{enumerate}
\item Every topologically of finite type $A$-algebra $B$ is a pseudoaffinoid $\oh_K$-algebra and if $\phi:(A, A^{\circ}) \rightarrow (B, B^+)$ denotes the corresponding topologically of finite type morphism, then $B^+=B^{\circ}$.
\item \cite[Corollary A.14]{JN16} Every homomorphism of pseudoaffinoid $\oh_K$-algebras $A \rightarrow B$ is topologically of finite type.
\end{enumerate}
\end{lem}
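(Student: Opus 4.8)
Part (2) is quoted from \cite{JN16}, so I would only need to prove part (1). First I would fix a noetherian ring of definition $A_0\subseteq A$ formally of finite type over $\oh_K$; after replacing a topologically nilpotent unit $t$ of $A$ by a large enough power one may assume $t\in A_0$, so that $A=A_0[1/t]$ and the topology of $A_0$ is the $t$-adic one. A topologically of finite type $A$-algebra has the form $B=A\langle X_1,\dots,X_r\rangle/I$ for a closed ideal $I$, so the plan is to argue in two steps: show first that $A\langle X_1,\dots,X_r\rangle$ is pseudoaffinoid, and then that the quotient $B$ is pseudoaffinoid, keeping careful track of the ring of definition at each stage.

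For the first step, let $A_0\langle X_1,\dots,X_r\rangle$ denote the $t$-adic completion of the polynomial ring $A_0[X_1,\dots,X_r]$. Presenting $A_0$ as a quotient of some $\oh_K[[T_1,\dots,T_m]]\langle S_1,\dots,S_n\rangle$ exhibits $A_0\langle X_1,\dots,X_r\rangle$ as a quotient of $\oh_K[[T_1,\dots,T_m]]\langle S_1,\dots,S_n,X_1,\dots,X_r\rangle$, so it is noetherian and formally of finite type over $\oh_K$; one then checks directly that $A_0\langle X_1,\dots,X_r\rangle[1/t]=A\langle X_1,\dots,X_r\rangle$ and that $A_0\langle X_1,\dots,X_r\rangle$ sits inside as an open and bounded subring, i.e. a ring of definition. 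For the second step, I would set $I_0=I\cap A_0\langle X_1,\dots,X_r\rangle$; since every ideal of the localization $A_0\langle X_1,\dots,X_r\rangle[1/t]$ is extended from its contraction, one gets $B=(A_0\langle X_1,\dots,X_r\rangle/I_0)[1/t]$. After replacing $I_0$ by its saturation $(I_0:t^{\infty})$, which does not change $B$, the ring $\bar R:=A_0\langle X_1,\dots,X_r\rangle/I_0$ is $t$-torsion free, and by the Artin--Rees lemma in the noetherian complete ring $A_0\langle X_1,\dots,X_r\rangle$ its $t$-adic topology agrees with the subspace topology it inherits from $B$; being a quotient of $A_0\langle X_1,\dots,X_r\rangle$, it is noetherian and formally of finite type over $\oh_K$. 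Hence $B$ is pseudoaffinoid with noetherian ring of definition $\bar R$.

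Next I would identify $B^+$ with $B^{\circ}$. The inclusion $B^+\subseteq B^{\circ}$ is automatic, since $B^+$ consists of power-bounded elements. For the reverse inclusion, note first that $\bar R\subseteq B^+$: the ring $\bar R$ is the closure in $B$ of the subring generated by the image of $A_0$ and the images of the variables $X_i$, while in a topologically of finite type morphism $(A,A^{\circ})\to(B,B^+)$ the ring $B^+$ contains the image of $A^{\circ}\supseteq A_0$ and the images of the $X_i$ and is both closed and integrally closed in $B$. Now given any $b\in B^{\circ}$, the set $\{b^n:n\ge 0\}$ is bounded, and since $\bar R$ is a ring of definition of $B$ there is an integer $N$ with $b^n\in t^{-N}\bar R$ for all $n$; hence the $\bar R$-submodule $\sum_{n\ge 0}\bar R\,b^n$ of $B$ lies in $t^{-N}\bar R\cong\bar R$, so it is finitely generated over the noetherian ring $\bar R$, and therefore $b$ is integral over $\bar R$ and lies in $B^+$. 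This would give $B^+=B^{\circ}$.

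I do not expect a genuine obstacle here. The one place to be careful is the topological bookkeeping in the second step — checking that, after discarding $t$-torsion, the noetherian quotient one writes down really induces the adic topology of $B$ — together with unwinding the definition of a topologically of finite type morphism of affinoid rings to see $\bar R\subseteq B^+$. The mathematical substance is just the observation that a power-bounded element of a pseudoaffinoid algebra has bounded orbit, hence generates a finite module over any noetherian ring of definition, hence is integral over it.
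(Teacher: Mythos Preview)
Your argument is correct and follows essentially the same route as the paper's own proof. The paper dismisses the pseudoaffinoid claim as ``clear'' and for $B^+=B^{\circ}$ simply observes that $B^+$ is the integral closure of a noetherian ring of definition, citing \cite[Lemma~A.2]{JN16} for the fact that this integral closure equals $B^{\circ}$; you have unpacked both steps explicitly, and in particular your bounded-orbit argument for $B^{\circ}$ being integral over $\bar R$ is exactly the content of the cited lemma. One small point to tighten: when you assert $\bar R\subseteq B^+$, you should use the presentation $B=A\langle X_1,\dots,X_r\rangle/I$ that witnesses the topologically of finite type structure of the morphism $(A,A^{\circ})\to(B,B^+)$ (Huber's definition guarantees one for which $B^+$ is the integral closure of the image of $A^{\circ}[X_1,\dots,X_r]$), rather than an arbitrary presentation chosen earlier; otherwise the images of the $X_i$ need not lie in $B^+$ a priori.
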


\begin{proof}
It is clear that $B$ is a pseudoaffinoid $\oh_K$-algebra. As for $B^+ =B^{\circ}$, this is due to $B^+$ being the integral closure of a noetherian ring of definition $B_0 \subseteq B$, and thus equal to $B^{\circ}$ (cf. \cite[Lemma A.2]{JN16}).
\end{proof}

Hence, every rational subset of $\spa(A, A^{\circ})$, where $A$ is a pseudoaffinoid $\oh_K$-algebra, is of the form $\spa(B, B^{\circ})$, where $B$ is a pseudoaffinoid $\oh_K$-algebra. We now define the following subcategory of analytic adic spaces.

\begin{defn}\label{definicao dos espacos pseudo-rigidos}
A pseudorigid space over $\oh_K$ is a $\spa(\oh_K)$-analytic adic space $X$, which is locally $\spa(\oh_K)$-isomorphic to $\spa A$, where $A$ is a pseudoaffinoid $\oh_K$-algebra. The category of pseudorigid spaces over $\oh_K$ is the full subcategory of $\spa(\oh_K)$-adic spaces whose objects are pseudorigid spaces.
\end{defn}

Now Lemma \ref{mapas de algebras pseudoafinoides sao topologicamente de tipo finito} tells us that this category is stable under open immersions and the functor $A \mapsto \spa(A)$ from the category of pseudoaffinoid algebras to the category of pseudorigid spaces, is fully faithful.

\begin{exmp}\label{exemplos canonicos de algebras pseudoafinoides}
Let $\Xf$ be a formal scheme, formally of finite type over $\oh_K$, which we regard here as an adic space in the sense of \cite{FRHuber} or \cite{EtHuber}. Then the set of analytic points $X=\Xf_a$ is a pseudorigid space over $\oh_K$. For instance, if $\Xf=\spf \oh_K[[T]]$, then $X$ is a quasicompact analytic adic space, covered by two affinoids, namely $\spa K \langle T/p \rangle$ and $\spa \oh_K[[T]] \langle p/T \rangle [1/T]$. If $\oh_K$ is a mixed characteristic DVR, then the latter ring does not live over a field. Further discussion regarding this example can be found in \cite{BerkSch}.
\end{exmp}

Before moving on, let us recall that $\spa \oh_K$ consists of two points: the closed point $s=\spa k$ and the generic point $\eta=\spa K$. By \cite[Proposition 1.2.2]{EtHuber}, the fibre products $X_s=X\times_{\spa\oh_K} \spa k$ and $X_{\eta}=X\times_{\spa \oh_K} \spa K$ exist and we call these the \emph{special}, resp. \emph{generic} fibre of a pseudorigid space $X$. It is not very difficult to see that $X_s$ is the Zariski closed adic subspace defined by the ideal sheaf $\pi \oh_X$ and $X_{\eta}$ its open complement.

\begin{prop}
The generic fibre $X_{\eta}$ of a pseudorigid space $X$ over $\oh_K$ is a $K$-rigid space. If $X=\emph{\spa} A$ with $A$ pseudoaffinoid, then $X_s$ is a rigid space over a Laurent series field $k((T))$.
\end{prop}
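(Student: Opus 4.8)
The plan is to deduce both statements from one structural fact about pseudoaffinoid algebras: such an algebra in which $\pi$ has become a unit is $K$-affinoid, and the parallel computation after reducing modulo $\pi$ produces a Laurent series field. For the generic fibre, since being a $K$-rigid space is local and $X_\eta=X\times_{\spa\oh_K}\spa K$ is locally of the form $(\spa A)_\eta$ with $A$ pseudoaffinoid, I may assume $X=\spa A$; then $X_\eta$ is the open complement of $X_s$, which I would identify with the locus $\{x\in X:|\pi(x)|\neq0\}$ where $\pi$ is invertible in the local rings. Fixing a topologically nilpotent unit $\varpi$ of $A$ and putting $U_n=\{x:|\varpi(x)^n|\leq|\pi(x)|\}=\spa A\langle\varpi^n/\pi\rangle$, the first step is the covering $X_\eta=\bigcup_{n\geq1}U_n$: if $|\pi(x)|\neq0$ then $|\pi(x)|$ is a nonzero element of the value group $\Gamma_x$ while $|\varpi(x)|$ is topologically nilpotent in $\Gamma_x$, so $|\varpi(x)|^n\leq|\pi(x)|$ for $n\gg0$; conversely $U_n\subseteq\{|\pi|\neq0\}$ since $|\varpi(x)^n|\neq0$. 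By Lemma~\ref{mapas de algebras pseudoafinoides sao topologicamente de tipo finito}, $A_n:=A\langle\varpi^n/\pi\rangle$ is again pseudoaffinoid with $A_n^+=A_n^\circ$, and $\pi$ is a unit in $A_n$ because $\pi\cdot(\varpi^n/\pi)=\varpi^n\in A_n^\times$. Granting the structural fact, each $U_n=\spa(A_n,A_n^\circ)$ is an affinoid rigid space over $K$, and $X_\eta$, glued from the $U_n$ over $\spa K$, is a $K$-rigid space.

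To establish the structural fact, let $B=B_0[1/\varpi]$ be pseudoaffinoid with $\pi\in B^\times$, where $B_0=\oh_K\langle T_1,\dots,T_r\rangle[[S_1,\dots,S_s]]/I$ is a noetherian ring of definition with its $(\pi,S_1,\dots,S_s)$-adic topology. From $1/\pi\in B_0[1/\varpi]$ one gets $\varpi^M=\pi b$ in $B_0$ for suitable $M$ and $b$, and topological nilpotence of $\pi$ gives $\pi^m\in\varpi B_0$ for $m\gg0$; these two relations show that $\{\pi^nB_0\}_n$ and $\{\varpi^nB_0\}_n$ are cofinal, whence $B=B_0[1/\pi]$ and $\{\pi^nB_0\}_n$ is a fundamental system of neighbourhoods of $0$ in $B$. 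In particular $B_0$ is $\pi$-adically complete, so its given topology coincides with its $\pi$-adic topology, forcing $S_i^N\in\pi B_0$ for some $N$. Hence $B_0/\pi B_0$ is a quotient of $k\langle T_1,\dots,T_r\rangle[S_1,\dots,S_s]/(S_1^N,\dots,S_s^N)$, which is module-finite over $k\langle T_1,\dots,T_r\rangle$ and therefore topologically of finite type over $k$; lifting a presentation $k\langle Y_1,\dots,Y_\ell\rangle\twoheadrightarrow B_0/\pi B_0$ to power-bounded elements of $B_0$ and invoking $\pi$-adic completeness of $B_0$ yields a surjection $\oh_K\langle Y_1,\dots,Y_\ell\rangle\twoheadrightarrow B_0$. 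Thus $B_0$ is topologically of finite type over $\oh_K$, so $B=B_0[1/\pi]$ is $K$-affinoid.

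For the special fibre, $X_s$ is the closed adic subspace of $X=\spa A$ cut out by $\pi$, so its affinoid ring is $A/\pi A$. If $\pi\in A^\times$ this ring is zero and $X_s=\emptyset$; otherwise, writing $A=A_0[1/\varpi]$ as above, $A/\pi A=R_0[1/T]$ with $R_0:=A_0/\pi A_0$ noetherian and formally of finite type over the residue field $k$ (being a quotient of $\oh_K\langle T_1,\dots,T_r\rangle[[S_1,\dots,S_s]]$ annihilated by $\pi$, it is a $k$-algebra), and with $T$ the image of $\varpi$, a topologically nilpotent unit of $A/\pi A$ contained in $R_0$. Since $R_0$ is now directly the $T$-adically complete ring of definition of $A/\pi A=R_0[1/T]$, repeating the argument of the previous paragraph with $(k,T)$ in place of $(\oh_K,\pi)$ gives $S_i^N\in TR_0$ for some $N$, shows $R_0/TR_0$ is topologically of finite type over $k$, and produces a surjection $k[[T]]\langle Y_1,\dots,Y_\ell\rangle\twoheadrightarrow R_0$ after lifting a presentation of $R_0/TR_0$. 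Therefore $A/\pi A=R_0[1/T]$ is a quotient of $k((T))\langle Y_1,\dots,Y_\ell\rangle$, i.e.\ a $k((T))$-affinoid algebra over the Laurent series field $k((T))=k[[T]][1/T]$, and $X_s=\spa(A/\pi A,(A/\pi A)^\circ)$ is an affinoid rigid space over $k((T))$.

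The substantive step is the structural fact above and its analogue modulo $\pi$: the point is that upon passing to the generic, resp.\ special, fibre the formal variables $S_i$ become topologically nilpotent for the $\pi$-adic, resp.\ $T$-adic, topology, so that the bounded power-series directions are absorbed and one recovers an honest rigid affinoid. Everything else --- the covering of $\{|\pi|\neq0\}$, the identifications of the rings of definition of the rational localisations $A_n$ and of the closed subspace $X_s$ (and of the appropriate rings of integral elements), and the compatibility of the structure maps with $\spa K$, resp.\ $\spa k((T))$ --- is routine given the material developed above.
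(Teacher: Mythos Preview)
Your argument is correct. It differs from the paper's proof in an interesting way.

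The paper's proof is a two-line application of Lemma~\ref{mapas de algebras pseudoafinoides sao topologicamente de tipo finito}(2) (i.e.\ \cite[Corollary A.14]{JN16}): once one reduces to a pseudoaffinoid $A$ with $\pi\in A^{\times}$ (resp.\ $\pi A=0$), the structure map $K\to A$ (resp.\ $k((T))\to A$ for any topologically nilpotent unit $T$) is a homomorphism of pseudoaffinoid $\oh_K$-algebras and hence automatically topologically of finite type, so $A$ is $K$-affinoid (resp.\ $k((T))$-affinoid). The only input is the black-box fact that \emph{every} morphism between pseudoaffinoid algebras is topologically of finite type.

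You instead prove the needed special cases of that black-box lemma by hand: you show directly that when $\pi$ becomes a unit the $\varpi$-adic and $\pi$-adic topologies on a ring of definition $B_0$ coincide, whence the formal variables $S_i$ are nilpotent modulo $\pi$ and $B_0/\pi B_0$ is of finite type over $k$, and then a complete Nakayama lift gives a surjection from a restricted power series ring. The analogue modulo $\pi$ with $T$ in place of $\pi$ handles the special fibre. Your explicit covering $X_\eta=\bigcup_n\spa A\langle\varpi^n/\pi\rangle$ is a more concrete version of the paper's tacit ``work locally on $X_\eta$''. The upshot is that your proof is self-contained and makes transparent the mechanism (the formal directions get absorbed once a single element generates the topology), at the cost of being longer; the paper's proof is shorter but leans on the Johansson--Newton result.
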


\begin{proof}
We may assume that $X=\spa A$ with $A$ being a pseudoaffinoid algebra over $\oh_K$ and $\pi \in A^{\times}$. Then the canonical map $K \rightarrow A$ is topologically of finite type by Lemma \ref{mapas de algebras pseudoafinoides sao topologicamente de tipo finito}. On the other hand, if $\pi A=0$, we choose a topologically nilpotent unit $T \in A$ and look at the continuous $\oh_K$-homomorphism $k((T)) \rightarrow A$. Once again, this must be topologically of finite type, which yields the claim.
\end{proof}
The next proposition allows us to bring all the notions developed in the previous section to the pseudorigid world.

\begin{prop}\label{pseudo-rigido e Jacobson-Tate e excelente}
Pseudoaffinoid $\oh_K$-algebras are Jacobson-Tate rings and they admit excellent rings of definition. In particular, pseudorigid $\oh_K$-spaces are excellent Jacobson adic spaces.
\end{prop}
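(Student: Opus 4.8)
The plan is to combine Proposition \ref{classe chave de exemplos de aneis de Jacobson-Tate} for the Jacobson-Tate property with a separate argument producing an excellent ring of definition, and then to invoke Proposition \ref{pseudo-rigido e Jacobson-Tate e excelente}'s hypotheses feeding into the earlier transfer results to conclude that pseudorigid spaces are excellent Jacobson adic spaces. So let $A$ be a pseudoaffinoid $\oh_K$-algebra with noetherian ring of definition $A_0$ formally of finite type over $\oh_K$; write $\af \subseteq A_0$ for its ideal of definition, which we may assume contains $\pi_K$.

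First I would handle the Jacobson-Tate claim. By Proposition \ref{classe chave de exemplos de aneis de Jacobson-Tate}, it suffices to show that $A_0/A_0^{\circ\circ} = A_0/\af$ is a Jacobson ring. Since $A_0$ is formally of finite type over $\oh_K$, the quotient $A_0/\af$ is a finite type algebra over $\oh_K/(\af\cap\oh_K)$, which is either $k$ or a finite quotient $\oh_K/\pi_K^m$; in either case it is a Jacobson ring, and finite type algebras over a Jacobson ring are Jacobson by the general Nullstellensatz. Hence $A$ is Jacobson-Tate.

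Next I would produce an excellent ring of definition. The point is that $A_0$ is a quotient $\oh_K[[T_1,\dots,T_s]]\langle X_1,\dots,X_r\rangle / I$ of a ring which is a restricted power series ring over a formal power series ring over $\oh_K$. Formal power series rings over a complete DVR are excellent (this is a theorem of Valabrega, or Greco–Marinari; it is the standard input), and excellence is preserved under passing to rings essentially of finite type, to completions with respect to an ideal, and to quotients. Therefore $\oh_K[[T_1,\dots,T_s]]\langle X_1,\dots,X_r\rangle$ is excellent, and so is its quotient $A_0$. Thus $A_0$ is an excellent noetherian ring of definition of $A$.

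Finally, combining the two parts: $A$ is Jacobson-Tate and has an excellent ring of definition, so by the definition of a pseudorigid space (Definition \ref{definicao dos espacos pseudo-rigidos}) and the fact that these properties are local, every pseudorigid $\oh_K$-space is locally $\spa$ of a Jacobson-Tate ring, hence a Jacobson adic space; and since each local model has an excellent ring of definition, the stalk-completions $\widehat{\oh}_{X,x}$ are excellent (being localisations-then-completions of $A$, which is excellent once $A_0$ is — note $A = A_0[1/g]$ for a topologically nilpotent $g$, so $A$ is essentially of finite type over $A_0$ up to completion), so $X$ is an excellent Jacobson adic space. I expect the main obstacle to be the clean citation that formal power series rings over a complete DVR are excellent and that this excellence descends to the restricted-power-series completion; the Jacobson-Tate half is essentially immediate from the earlier proposition, and the excellence of $A$ itself from that of $A_0$ is a routine application of the permanence properties of excellent rings.
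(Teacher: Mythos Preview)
Your proof is correct and follows essentially the same approach as the paper: for the Jacobson-Tate claim you reduce to Proposition~\ref{classe chave de exemplos de aneis de Jacobson-Tate} by noting that $A_0$ modulo its ideal of definition is of finite type over $k$, and for excellence you invoke Valabrega's theorems on power series and completions, exactly as the paper does via \cite[Proposition 7]{PSVal} and \cite[Theorem 9]{ExcVal}. The only difference is cosmetic: you spell out the presentation $A_0 \cong \oh_K[[T_1,\dots,T_s]]\langle X_1,\dots,X_r\rangle/I$ and the permanence properties of excellence, whereas the paper compresses this into the two citations; your final paragraph on why $A$ itself (and hence the completed stalks) is excellent is more explicit than the paper, which leaves the ``in particular'' clause to the reader.
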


\begin{proof}
Let $R$ be a formally of finite type $\oh_K$-algebra. Its reduction $R/R^{\circ \circ}$ is a finite type $k$-algebra, so the first assertion is a consequence of \ref{classe chave de exemplos de aneis de Jacobson-Tate}. As for excellency of $R$, this is a consequence of \cite[Proposition 7]{PSVal} and \cite[Theorem 9]{ExcVal}.
\end{proof}

In order to understand pseudoaffinoid algebras better, we will need to work with some very concrete examples.

\begin{defn}
Given a positive rational number $\lambda =\frac{n}{m}$, $(n,m)=1$, we define the $\lambda$-elementary pseudoaffinoid $\oh_K$-algebra as $D_{\lambda}:=\oh_K[[T]] \langle \frac{\pi^m}{T^n}\rangle[1/T]$ and we name the topologically freely generated $D_{\lambda}$-algebras $D_{\lambda} \langle X_1, \dots, X_r \rangle$ by $\lambda$-canonical pseudoaffinoid $\oh_K$-algebras.
\end{defn}

For legibility purposes, we will often shorten $D_{\lambda} \langle X_1, \dots, X_r \rangle$ to $D_{\lambda, r}$. 

\begin{lem}\label{caracterizacao baril dentro do espaco do demonio} For any pseudoaffinoid $\oh_K$-algebra $A$, there is a sufficiently small $\lambda \in \QQ_{>0}$ such that $A$ is a topologically of finite type $D_{\lambda}$-algebra. \end{lem}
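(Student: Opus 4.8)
The plan is to reduce everything to the structure theory of the noetherian ring of definition $A_0$ and then track what happens under inverting a topologically nilpotent unit. First I would fix a noetherian ring of definition $A_0 \subseteq A$ which is formally of finite type over $\oh_K$, so that $A_0$ is a quotient of $\oh_K[[T_1,\dots,T_s]]\langle X_1,\dots,X_r\rangle$ by some ideal, with the $\pi$-adic-plus-$(T_i)$-adic topology. Since $A$ is Tate, there is a topologically nilpotent unit; if $\pi\in A^\times$ the claim is essentially the classical Noether normalisation for affinoid algebras over $K$ (with $\lambda$ chosen so that $D_\lambda\otimes K = K$-point data makes sense), so the interesting case is $\pi\notin A^\times$, where we may pick a topologically nilpotent unit $t\in A$ and even arrange $t$ to come from $A_0$ (after enlarging $A_0$). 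The key point is that $A = A_0[1/t]$ and that $A_0$ is $I$-adically complete for $I=(\pi,t,\dots)$ an ideal of definition; one shows the $T$ in $D_\lambda = \oh_K[[T]]\langle \pi^m/T^n\rangle[1/T]$ should map to (a power related to) this $t$.

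Next I would carry out the construction: choose a continuous $\oh_K$-algebra map $\oh_K[[T]] \to A_0$ sending $T \mapsto t$; this is well-defined and continuous because $t$ is topologically nilpotent, so $T^k \to 0$, hence it extends to the $T$-adic completion. The issue is whether $\pi^m/T^n$ lands in $A_0$, i.e. whether $\pi^m \in t^n A_0$ for suitable $m,n$: this is where "sufficiently small $\lambda$" enters. Because $\pi$ and $t$ both lie in the ideal of definition $I$ of the noetherian ring $A_0$, and $A_0$ is $I$-adically complete (so $I$ is finitely generated), one can use the Artin–Rees lemma or simply that $\bigcap_n (t^n) $ behaves controllably: more precisely, since $A$ is Tate with $t$ a topologically nilpotent unit, $\pi$ is also topologically nilpotent in $A$, hence $\pi \in \sqrt{tA_0'}$ for some larger ring of definition, giving $\pi^N \in tA_0$ for some $N$; iterating, $\pi^{Nn} \in t^n A_0$. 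Writing $\lambda = \pi^{?}$-exponent over $t$-exponent appropriately, i.e. taking $\lambda$ of the form $n/(Nn)$ small enough, one gets a continuous $\oh_K$-algebra homomorphism $D_\lambda \to A$. Then $A$ becomes a $D_\lambda$-algebra, and by Lemma \ref{mapas de algebras pseudoafinoides sao topologicamente de tipo finito}(2), since both are pseudoaffinoid $\oh_K$-algebras, this homomorphism is automatically topologically of finite type, which is exactly the assertion.

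I expect the main obstacle to be the bookkeeping around topologies and rings of definition: one must make sure that after replacing $A_0$ by a larger noetherian ring of definition (to contain $t$, and to witness $\pi^N \in tA_0$), the ring is still formally of finite type over $\oh_K$ and still $I$-adically complete with $I$ generated by $\pi$ and the images of the power-series variables, so that the map $\oh_K[[T]]\to A_0$ extends to the bounded localisation $\oh_K[[T]]\langle \pi^m/T^n\rangle$ and remains continuous into $A$. A secondary subtlety is handling uniformly the cases $\pi \in A^\times$ and $\pi \notin A^\times$: in the former, $D_\lambda[1/\pi] \cong K\langle T, T^{-1}\rangle$-type algebra over which $A$ is topologically of finite type by the classical argument, so one should either treat it separately or observe that the construction above degenerates gracefully. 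Once the homomorphism $D_\lambda \to A$ is in hand, finiteness of type is free, so the real content is entirely in producing that homomorphism, i.e. in the estimate $\pi^m \in t^n A_0$ for small $\lambda = n/m$.
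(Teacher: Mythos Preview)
Your approach is correct and follows the same route as the paper: send $T$ to a topologically nilpotent unit $s \in A$, verify that $\pi^m s^{-n}$ is powerbounded for suitable $m,n$, and then invoke Lemma~\ref{mapas de algebras pseudoafinoides sao topologicamente de tipo finito}(2) to get topological finite type automatically. The paper's proof is two lines---it simply notes that $\pi^n s^{-1} \to 0$ in $A$ and hence lies in $A^{\circ}$ for large $n$, giving a map $D_{1/n} \to A$ directly; this bypasses your case split on whether $\pi \in A^{\times}$, the Artin--Rees detour, the iteration $\pi^{Nn} \in t^n A_0$ (which is unnecessary since $\lambda = n/(Nn) = 1/N$ does not depend on $n$), and all the bookkeeping about enlarging rings of definition.
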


\begin{proof}
Let $s \in A$ be a topologically nilpotent unit and choose a sufficiently large $n$ such that $\pi^ns^{-1} \in A^{\circ}$. This allows us to define a continuous $\oh_K$-homomorphism $D_{\frac{1}{n}}\rightarrow A$ by mapping $T$ to $s$, and we are now done by Lemma \ref{mapas de algebras pseudoafinoides sao topologicamente de tipo finito}. \end{proof}

\begin{rem}
This implies that pseudoaffinoid algebras over an equicharacteristic DVR are just affinoid algebras over some field $k((T))$. Indeed, writing $\oh_K=k[[\pi]]$, which is always possible in this case, we obtain $D_{\lambda}=k((T)) \langle \pi, \pi^m/T^n \rangle$.
\end{rem}
\begin{prop}\label{dominios de ideais principais e dominios de factorizacao unica}
The pseudoaffinoid algebra $D_{\lambda, r}$ is a regular domain, whose maximal ideals have constant height equal to $r+1$.
\end{prop}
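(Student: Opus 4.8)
The plan is to reduce everything to the structure of the ring $D_\lambda = \oh_K[[T]]\langle \pi^m/T^n\rangle[1/T]$ itself, since the case $r>0$ follows by adjoining the free variables $X_1,\dots,X_r$ and invoking standard permanence properties (regularity, being a domain, and the behaviour of heights of maximal ideals under passing to a Tate algebra in finitely many variables over a base whose maximal ideals all have the same height). So the core is to analyze $D_\lambda$.

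First I would get a concrete hold on $D_\lambda$. Writing $B_0 = \oh_K[[T]]\langle \pi^m/T^n\rangle$, this is a noetherian ring of definition, and $D_\lambda = B_0[1/T]$. Since $\pi^m = (\pi^m/T^n)\cdot T^n$ in $B_0$, inverting $T$ also makes $\pi$ a unit, so $D_\lambda$ is flat (indeed $\pi$-torsion free) over $\oh_K$ and $T$ is a topologically nilpotent unit. I would identify $B_0$ more explicitly as $\oh_K[[T]][S]/(ST^n - \pi^m)$ completed appropriately, or rather work with $B_0/\pi B_0$, which is $k[[T]]\langle S\rangle/(ST^n) \cong k[[T]][S]/(ST^n)$-completion; after inverting $T$ this reduction becomes $k((T))$, matching the earlier remark that $D_\lambda/\pi D_\lambda \cong k((T))$. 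The upshot I want is: $D_\lambda$ has Krull dimension $1$, it is $\oh_K$-flat, $D_\lambda/\pi \cong k((T))$ is a field, and $\pi$ is contained in every maximal ideal of $D_\lambda$ (since a maximal ideal not containing $\pi$ would live in $D_\lambda[1/\pi]$, but that generic fibre is a rigid space over $K$ whose points I can also describe — here I need that the generic fibre is nonempty/affinoid and track both families of maximal ideals, those "on the special fibre" with residue field finite over $k((T))$ and those "on the generic fibre" with residue field finite over $K$). To pin down that maximal ideals have constant height $1$ in $D_\lambda$: $D_\lambda$ is a $1$-dimensional noetherian excellent domain (Proposition \ref{pseudo-rigido e Jacobson-Tate e excelente}), so every nonzero prime is maximal of height $1$, and I must rule out the zero ideal being maximal, i.e. show $D_\lambda$ is not a field — which is clear since e.g. $T$ is a non-unit... wait, $T$ is a unit; instead use that $\pi$ is a nonzero non-unit, or that $D_\lambda/\pi$ is a proper nonzero quotient. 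So all maximal ideals have height exactly $1$, giving the $r=0$ case of the height claim.

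Next, regularity and the domain property of $D_\lambda$. For the domain property: $\oh_K[[T]]$ is a regular local domain (a power series ring over a DVR), $\oh_K[[T]]\langle \pi^m/T^n\rangle$ is the $\pi$-adic completion of $\oh_K[[T]][\pi^m/T^n] = \oh_K[[T]][U]/(T^nU - \pi^m)$; since $T^nU-\pi^m$ is irreducible in the UFD $\oh_K[[T]][U]$ and the quotient is $\pi$-adically separated and the completion is flat, I would argue $B_0$ is a domain, hence so is $B_0[1/T]=D_\lambda$. For regularity: it suffices to check $D_\lambda$ is regular at each maximal ideal $\mf$; equivalently, by Proposition \ref{propriedades de aneis de Jacobson-Tate}(2) and the fact that excellent rings have regular formal fibres, it suffices to check $\widehat{(D_\lambda)}_\mf$ is regular, but actually the cleanest route is: on the generic fibre $D_\lambda[1/\pi]$ is an affinoid $K$-algebra which one checks is regular (it is a rational localisation of $K\langle T\rangle$ or visibly smooth), and on the special fibre the completed local rings agree with those of $k((T))$ by the isomorphism $D_\lambda/\pi \cong k((T))$ together with $\pi$-flatness and a Jacobian/lifting argument showing $\pi$ is part of a regular system of parameters. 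Concretely: at a maximal ideal $\mf \supseteq (\pi)$, $D_\lambda/\mf$ is finite over $k((T)) = D_\lambda/\pi$, so $\mf/\pi$ is a maximal ideal of the field $k((T))$, i.e. $\mf = \pi D_\lambda$ localised — wait, that forces $\mf$ to be minimal over $\pi$, and since $\dim D_\lambda = 1$ and $\pi \neq 0$, $\mf$ has height $1$ and $\pi$ generates $\mf (D_\lambda)_\mf$ up to radical; excellence plus $D_\lambda$ being a domain of dimension $1$ then gives regularity at $\mf$ once we know $(D_\lambda)_\mf$ is reduced, which follows from $D_\lambda$ being a domain. At a maximal ideal not containing $\pi$, we are in the regular affinoid generic fibre. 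Either way all localisations at maximal ideals are regular, and since $D_\lambda$ is Jacobson (Proposition \ref{propriedades de aneis de Jacobson-Tate}(3)) regularity at all maximal ideals implies regularity everywhere.

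Finally, the general $r$: $D_{\lambda,r} = D_\lambda\langle X_1,\dots,X_r\rangle$ is faithfully flat over $D_\lambda$ with regular (indeed $0$-dimensional, being fields) fibres over each residue field, so it is regular by \cite[Corollaire 6.5.2]{EGAIV}; it is a domain because $D_\lambda$ is a domain and the Tate algebra construction preserves this (reduce mod $\pi$ to $k((T))\langle X_1,\dots,X_r\rangle$ on one chart and use $\pi$-flatness, or cite the standard fact for strictly convergent power series over a noetherian domain). For the height of maximal ideals: given a maximal ideal $\Mf \subseteq D_{\lambda,r}$, its contraction to $D_\lambda$ is a maximal ideal $\mf$ (by the Jacobson/finiteness properties of Proposition \ref{propriedades de aneis de Jacobson-Tate}(1)), of height $1$; the fibre $D_{\lambda,r}/\mf D_{\lambda,r} = (D_\lambda/\mf)\langle X_1,\dots,X_r\rangle$ is a Tate algebra over a complete nonarchimedean field, in which every maximal ideal has height $r$ (classical); since $D_{\lambda,r}$ is flat over $D_\lambda$ and both are catenary (excellent), the dimension formula gives $\height \Mf = \height \mf + \height (\Mf/\mf D_{\lambda,r}) = 1 + r$.

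The main obstacle I expect is the careful bookkeeping at the maximal ideals lying on the special fibre: showing cleanly that the isomorphism $D_\lambda/\pi \cong k((T))$ — combined with $\oh_K$-flatness and the excellence already available — forces each such localisation to be a $1$-dimensional regular local ring with $\pi$ (essentially) a uniformiser, rather than getting lost in explicit completed local ring computations. Everything else is an assembly of permanence properties (flat base change for regularity, the Tate-algebra dimension formula, Jacobson/finiteness of the contraction map) that the earlier sections have put in place.
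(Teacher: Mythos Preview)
Your approach differs from the paper's: you reduce first to $r=0$ and then bootstrap via $D_\lambda \to D_{\lambda,r}$, whereas the paper treats $D_{\lambda,r}$ directly by showing that the composite $\oh_K \to \oh_K[[T]]\langle X_1,\dots,X_r\rangle[1/T] \to D_{\lambda,r}$ is a regular morphism (each step being the completion of an excellent ring, hence regular by \cite[Scholie 7.8.3 (v)]{EGAIV}), so regularity of $\oh_K$ transfers via \cite[Corollaire 6.5.2]{EGAIV}. For the height the paper uses the same generic/special split you use, but already at level $r$: the generic fibre of $\spa D_{\lambda,r}$ is an $(r{+}1)$-dimensional rigid semiopen annulus, while on the special fibre $\oh_K$-flatness and Krull's Hauptidealsatz give $\text{ht}\,\mf = 1 + \text{ht}\,\widetilde{\mf}$ with $D_{\lambda,r}/\pi \cong k((T))\langle X_1,\dots,X_r\rangle$ being $r$-equidimensional. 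For the domain property the paper simply observes that a connected regular ring is a domain, and connectedness follows from $\oh_K$-flatness together with both fibres being connected, bypassing your explicit irreducibility computation for $T^nU-\pi^m$.

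Your route is workable, but two points need repair. First, a slip: the fibres of $D_\lambda \to D_{\lambda,r}$ over the residue fields of $D_\lambda$ are the Tate algebras $\kappa(\mf)\langle X_1,\dots,X_r\rangle$, which are regular of dimension $r$, not ``$0$-dimensional, being fields''; more importantly, to invoke \cite[Corollaire 6.5.2]{EGAIV} you need geometrically regular fibres over \emph{all} primes of $D_\lambda$, including the generic point, and you do not address this. The clean fix is to factor $D_\lambda \to D_\lambda[X_1,\dots,X_r] \to D_{\lambda,r}$ as a smooth map followed by the completion of an excellent ring --- which is exactly the mechanism the paper exploits one level up, so at that point the detour through $r=0$ buys nothing. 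Second, in your $r=0$ regularity argument at the special-fibre maximal ideal, the implicit claim that a reduced $1$-dimensional local domain is regular is false (think of $k[[x,y]]/(y^2-x^3)$); what actually works is your own earlier observation that $\mf = \pi D_\lambda$ on the nose, so the maximal ideal of $(D_\lambda)_\mf$ is principal and the localisation is a DVR.
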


\begin{proof}
First, we observe that the completion $\oh_K[[T]]\langle X_1, \dots, X_r \rangle[1/T] \rightarrow D_{\lambda,r}$ is a regular map (cf. \cite[Tag 07BZ]{StacksProj}). Indeed, this follows from \cite[Scholie 7.8.3 (v)]{EGAIV}, which is applicable due to excellency of formally of finite type $\oh_K$-algebras, and stability of regular maps under localisation. Repeating the same argument, we even conclude that $\oh_K \rightarrow D_{\lambda, r}$ is a regular map, so $D_{\lambda, r}$ is a regular ring by \cite[Corollaire 6.5.2 (ii)]{EGAIV}.

Let now $\mf$ be a maximal ideal of $D_{\lambda, r}$. If $\mf$ defines a point in the generic fibre, its height may be computed after rational localisation. But the generic fibre of $\spa D_{\lambda,r}$ is simply an $r+1$-dimensional $K$-rigid semiopen annulus given by the inequalities $\lvert \pi^m \rvert \leq \lvert T \rvert <1$. On the other hand, if $\mf$ defines a point in the special fibre, then $\oh_K$-flatness of $D_{\lambda, r}$ and Krull's Hauptidealsatz tell us that the $\text{ht}\, \mf= \text{ht}\, \widetilde{\mf}+1$, where $\widetilde{\mf}$ is the reduction of $\mf$ modulo $\pi$. But $D_{\lambda, r}/\pi \cong k((T)) \langle X_1, \dots, X_r \rangle$ is $r$-equidimensional.

As for connectedness, this follows easily from $\oh_K$-flatness of $D_{\lambda,r}$ and both its generic and special fibres being connected.
\end{proof}

\begin{rem}
We should also note that, if one follows step by step the proof of \cite[Theorem 2.5.3]{JN17}, then one is capable of showing that $D_{\lambda,r}$ is a UFD whenever $\lambda \in \NN$.
\end{rem}

Since pseudoaffinoid algebras are catenary, the equidimensionality also holds for integral domains.

\begin{cor}\label{maximais tem altura constante num dominio ca dos nossos e a fibra especial e equidimensional}
Let $A$ be an integral pseudoaffinoid $\oh_K$-algebra. Then $A$ is an equidimensional ring and the irreducible components of $A/\pi A$ have the same dimension.
\end{cor}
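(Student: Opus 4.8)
The plan is to reduce everything to the previously established Proposition~\ref{dominios de ideais principais e dominios de factorizacao unica} concerning the canonical algebras $D_{\lambda,r}$ by means of a Noether normalisation, and then to propagate equidimensionality through a finite injection using catenariness. First I would distinguish the two cases according to whether $\pi$ is a unit in $A$. If $\pi\in A^\times$, then $A$ is an affinoid algebra over $K$ and the claim about equidimensionality of $A$ (there is then no special fibre) is classical, following from Noether normalisation for $K$-affinoid algebras together with the fact that $K\langle X_1,\dots,X_r\rangle$ is equidimensional and $A$ is finite over it, plus catenariness. So the substance is the case $\pi\notin A^\times$; here I may also assume $\pi A\ne 0$, since if $\pi A=0$ then $A$ is a $k((T))$-affinoid domain and we again invoke the classical equidimensionality of affinoid domains.

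So suppose $\pi\notin A^\times$ and $\pi$ is $A$-regular. By Lemma~\ref{caracterizacao baril dentro do espaco do demonio}, $A$ is topologically of finite type over some $D_\lambda$; by the Noether normalisation for $k((T))$-affinoid algebras applied to the domain $A/\pi A$, there is a finite injection $k((T))\langle X_1,\dots,X_r\rangle\hookrightarrow A/\pi A$, where $r=\dim(A/\pi A)$ is well-defined because $A/\pi A$ is an affinoid domain, hence already known to be equidimensional --- this disposes of the second assertion of the corollary modulo the choice of a uniform $r$. Then I would apply the introductory proposition (the pseudorigid Noether normalisation, Proposition~\ref{...}) to lift this to a finite injection $\Phi\colon D_{n}\langle X_1,\dots,X_r\rangle\hookrightarrow A\langle \pi/T^n\rangle$ for $n$ large. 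Since $A$ is already $D_\lambda$-topologically of finite type, the localisation $A\to A\langle\pi/T^n\rangle$ is a rational localisation that for $n$ sufficiently large is an isomorphism (or at least covers the analytic locus and is faithfully flat on the relevant primes), so that $A$ itself receives a finite injection from $D_{n,r}$.

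Now the ring-theoretic core: $A$ is a module-finite extension of the regular, hence equidimensional, domain $B:=D_{n,r}$, whose maximal ideals all have height $r+1$ by Proposition~\ref{dominios de ideais principais e dominios de factorizacao unica}. Since $B\hookrightarrow A$ is finite and injective, going-up and the dimension formula for finite extensions give $\dim A=\dim B=r+1$, and for every maximal (equivalently, by the Jacobson-Tate property, every closed) point of $\spa A$, lying over a maximal ideal of $B$, the fibre dimension is zero, so all maximal ideals of $A$ have height $r+1$; using that $A$ is catenary (as an excellent ring, by Proposition~\ref{pseudo-rigido e Jacobson-Tate e excelente}) and that $A$ is a domain, this forces $A$ to be equidimensional. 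For the special fibre, $A/\pi A$ is finite over $B/\pi B\cong k((T))\langle X_1,\dots,X_r\rangle$, which is $r$-equidimensional, and again finiteness plus catenariness shows every irreducible component of $A/\pi A$ has dimension $r$.

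The main obstacle I expect is the bookkeeping around the localisation $A\to A\langle \pi/T^n\rangle$ and making sure the finite injection $D_{n,r}\hookrightarrow A\langle\pi/T^n\rangle$ descends to, or is as good as, a finite injection into $A$ itself: one must check that enlarging $n$ does not actually change the spectrum of $A$ in the relevant range (because $T$ is already a topologically nilpotent unit and $\pi/T^n\in A^\circ$ for $n\gg0$), so that $\spa A=\spa A\langle\pi/T^n\rangle$ and the height computations are unaffected. A secondary subtlety is verifying that ``equidimensional'' is the right conclusion to extract from ``all maximal ideals have the same height in a catenary domain'' --- this is where one genuinely uses that $A$ is a domain and catenary, rather than merely that the closed points have constant codimension.
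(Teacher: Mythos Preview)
Your argument contains a genuine gap: you assert that $A/\pi A$ is an affinoid \emph{domain} and use this to dispose of the second assertion. But $A/\pi A$ need not be a domain --- indeed, the whole point of the second claim is that its irreducible components (plural) all share the same dimension. Your fallback argument, that finiteness of $A/\pi A$ over the equidimensional domain $k((T))\langle X_1,\dots,X_r\rangle$ together with catenariness forces each component to have dimension $r$, is also incomplete as stated: a finite injection $B\hookrightarrow C$ with $B$ an equidimensional domain does not in general force minimal primes of $C$ to contract to zero (consider $k[x]\hookrightarrow k[x]\times k$ via $f\mapsto(f,f(0))$). The fix is easy and already available to you: once you know $A$ is an equidimensional catenary domain of dimension $r+1$, Krull's Hauptidealsatz gives every minimal prime over $(\pi)$ height $1$, whence every component of $A/\pi A$ has dimension $r$.

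Two further remarks. First, you invoke the pseudorigid Noether normalisation (Proposition~\ref{Noether levantada do chao}), which in the paper appears \emph{after} this corollary; there is no logical circularity, but the ordering is worth noting. Second, and more to the point, the paper's own route is considerably lighter and avoids this machinery: by Lemma~\ref{caracterizacao baril dentro do espaco do demonio}, $A$ is a \emph{quotient} of some $D_{\lambda,N}$ by a prime ideal $\pf$, and a quotient of an equidimensional catenary domain by a prime is again equidimensional, since $\height(\mf/\pf)=\height\mf-\height\pf$ is constant as $\mf$ ranges over maximal ideals containing $\pf$. This uses only Proposition~\ref{dominios de ideais principais e dominios de factorizacao unica} and excellence, with no need for a finite normalisation or the bookkeeping around $A\langle\pi/T^n\rangle$.
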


The next corollary provides an answer to the question \cite[page 9]{JN17}.

\begin{cor}\label{dimensao topologica e de krull coincidem}
Let $X$ be a pseudorigid space over $\oh_K$. Then, we have an equality $\emph{dim.Kr}\,X=\emph{\dimn} X$.
\end{cor}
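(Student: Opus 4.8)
The plan is to reduce to the affinoid case and then prove a single equality. Since both the topological Krull dimension $\dimn$ and the invariant $\mathrm{dim.Kr}$ of a locally noetherian analytic adic space are the supremum of the corresponding quantity over the members of any affinoid cover --- the Jacobson--Gelfand spectrum and the completed stalks being compatible with rational localisation, cf. Proposition \ref{propriedades de aneis de Jacobson-Tate} --- we may assume $X=\spa A$ for a pseudoaffinoid $\oh_K$-algebra $A$ with $A^+=A^{\circ}$. As the earlier proposition identifying $\mathrm{dim.Kr}$ of an affinoid Jacobson adic space with the Krull dimension of its ring of sections already gives $\mathrm{dim.Kr}\,\spa A=\dimn A$, it suffices to prove $\dimn\spa A=\dimn A$.

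The heart of the argument is the identity $\dimn\oh_{X,y}=\height(\mathrm{supp}\,y)$ for every $y\in\spa A$. Indeed $A\to\oh_{X,y}$ is flat, being a filtered colimit of the flat rational localisations of the strongly noetherian ring $A$, and $A_{\mathrm{supp}\,y}\to\oh_{X,y}$ is moreover a local homomorphism; by faithful flatness and the dimension formula one is reduced to showing that the closed fibre $\oh_{X,y}/(\mathrm{supp}\,y)\oh_{X,y}$ --- which is the stalk at $y$ of the adic space $\spa(A/\mathrm{supp}\,y)$ --- has dimension zero. This is clear when $y$ is a \emph{closed} point of $\spa A$: then $\mathrm{supp}\,y$ is forced to be a maximal ideal $\mf$ (a point of non-maximal support admits a proper specialisation along the support map), $A/\mf$ is a complete nonarchimedean field whose residue field is finite over $k$ --- by Noether normalisation for $K$- resp. $k((T))$-affinoid algebras, according as $\mf$ lies on the generic or the special fibre --- and hence $\spa(A/\mf)$ consists of the single rank-one point, so the fibre is a field. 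Since the topological dimension of a spectral space is the supremum of $\dimn\oh_{X,y}$ over its closed points, and --- again using that the support map has single-point fibres over maximal ideals --- the closed points of $\spa A$ are precisely the points of $\mathrm{JG}(\spa A)$, which sit over the maximal ideals of $A$, and $A$ is a Jacobson ring, we get $\dimn\spa A=\sup_{\mf\in\mathrm{Max}\,A}\height\mf=\dimn A$. (The inequality $\dimn\spa A\geq\dimn A$ may also be seen directly, exactly as in the rigid case \cite[Lemma 1.8.6]{EtHuber}: for $x\in\mathrm{JG}(\spa A)$ over a maximal ideal of maximal height, lift a chain of primes of the noetherian local ring $\oh_{X,x}$, whose completion is $\widehat A_{\mf}$ by Proposition \ref{propriedades de aneis de Jacobson-Tate}, to a chain of irreducible closed subsets of $X$ through $x$.)

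The step I expect to be the main obstacle is the assertion that the topological dimension of $\spa A$ is governed by the local rings at its closed points; equivalently, that the higher-rank valuative points of the adic space, which have no analogue in rigid geometry, do not inflate the dimension. For rigid spaces this is exactly the content of Huber's \cite[Lemma 1.8.6, Corollary 1.8.8]{EtHuber}, and one must verify that his argument survives in the pseudoaffinoid setting; by the discussion above, the crux is the behaviour of the support map over maximal ideals, i.e. that $A^{\circ}$ surjects onto the residue field of $A/\mf$ up to passing to a valuation subring. An alternative route, which uses the rigid case only as a black box, is to invoke a Noether normalisation to reduce to $X=\spa D_{\lambda,r}$ and then to split along the special fibre: $X_{\eta}$ is a $K$-rigid space of dimension $r+1$ and $X_{s}$ a $k((T))$-rigid space of dimension $r$ (Proposition \ref{dominios de ideais principais e dominios de factorizacao unica}), so \cite{EtHuber} applies on each stratum, the residual difficulty being to bound the length of a chain of irreducible closed subsets of $X$ meeting both strata by $\dimn D_{\lambda,r}=r+1$, for which one uses that pseudoaffinoid domains are equidimensional and catenary (Corollary \ref{maximais tem altura constante num dominio ca dos nossos e a fibra especial e equidimensional}).
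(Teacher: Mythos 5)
Your main line of argument has a genuine gap, and it sits one step earlier than where you flag it. The claim that the closed points of $\spa A$ are precisely the points of $\mathrm{JG}(\spa A)$ --- equivalently, that a point of non-maximal support admits a proper specialisation with strictly larger support --- is false for analytic adic spaces: every specialisation of an analytic point is \emph{vertical}, i.e. support-preserving (this is the observation with which the paper's proof opens). For instance, in $\spa K\langle T\rangle$ the Gauss point $\eta$ has support $(0)$ and is not closed, so the closed subset $\overline{\{\eta\}}$ contains a closed point of the space distinct from $\eta$, necessarily of rank $\geq 2$ and still supported at $(0)$. Your verification that the closed fibre of $A_{\mathrm{supp}\,y}\to\oh_{X,y}$ is zero-dimensional therefore does not cover all closed points. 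Moreover, the step you yourself identify as the main obstacle --- that $\dimn X$ is the supremum of $\dimn\oh_{X,y}$ over closed points --- is not a formal fact in this setting: unlike for schemes, the primes of the stalk $\oh_{X,y}$ do not parametrise the generalisations of $y$ (a classical point of the disc is closed and has no proper generalisation, yet its stalk is one-dimensional), so chains of specialisations and chains of primes in stalks are genuinely different objects. That assertion is essentially equivalent to Huber's \cite[Lemma 1.8.6, Corollary 1.8.8]{EtHuber}, i.e. to the statement being proved, and your proposal supplies no argument for it.

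The ``alternative route'' you sketch at the end is in substance the paper's proof, and the residual difficulty you worry about there evaporates for the same reason your first route fails: since specialisations are vertical, the special and generic fibres are each closed under specialisation and partition $X$, so every chain of irreducible closed subsets (equivalently, every chain of specialisations of their generic points) lies entirely in $X_s$ or entirely in $X_\eta$, and $\dimn X=\max\{\dimn X_s,\dimn X_\eta\}$ with no mixed chains to control. Both fibres are rigid spaces over nonarchimedean fields, so \cite[Lemma 1.8.6]{EtHuber} applies to each, and one concludes using $\oh_K$-flatness and the equidimensionality of pseudoaffinoid domains (Corollary \ref{maximais tem altura constante num dominio ca dos nossos e a fibra especial e equidimensional}); no Noether normalisation is needed. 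If you prefer to salvage your first route, you would have to prove $\dimn\oh_{X,y}=\height(\mathrm{supp}\,y)$ for \emph{all} $y$ and then relate specialisation chains to chains of supports directly, which again comes down to the verticality of specialisations --- at which point you have reproduced the fibrewise argument.
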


\begin{proof}
Since all generalisations are vertical, we assume without loss of generality that $X=\spa A$, where $A$ is a pseudoaffinoid domain. Due to Corollary \ref{maximais tem altura constante num dominio ca dos nossos e a fibra especial e equidimensional}, we know that $\dimn A=\dimn B$ for every rational localisation of $A$. 
	
Notice that the special and generic fibres are specialising subsets which partition $X$, so we have the identity $\dimn X=\max\{\dimn X_s, \dimn X_{\eta}\}$. By \cite[Lemma 1.8.6]{EtHuber}, we deduce that $\dimn X_s=\dimn A/\pi A$ and $\dimn X_{\eta}=\dimn A$, as long as the generic fibre is nonempty. But if this were the case, then $\pi A=0$. In any case, the desired equality $\dimn X=\dim A=\text{dim.Kr}\,X$ holds.
\end{proof}

The next result is a version of Noether's normalisation lemma, which will be extremely useful for proving the Hebbarkeitssätze in the subsequent section.

\begin{prop}\label{Noether levantada do chao} \emph{(Noether levantada do chão)}\footnote{This is a reference to the novel \textit{Levantado do Chão} (eng. \textit{Raised from the Ground}) from the Nobel Prize winning author José Saramago. This is motivated by the fact that we always imagine the special fibre depicted as the horizontal axis of the picture in \cite[Figure 1, page 23]{BerkSch}.}
Let $A$ be an $\oh_K$-flat pseudoaffinoid algebra, such that $\pi \notin A^{\times}$, and let $T$ be a topologically nilpotent unit of $A$. If $\phi: k((T)) \langle X_1, \dots, X_r \rangle \rightarrow  A/\pi A$ is a finite injection of $k((T))$-affinoid algebras, then it can be lifted to a finite injection $\Phi:D_{n,r} \rightarrow A \langle \pi/T^n \rangle$ of pseudoaffinoid algebras, for a sufficiently large natural number $n$.
\end{prop}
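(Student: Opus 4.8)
The plan is to lift the finite injection fibrewise by a successive approximation / Hensel-type argument, controlling integrality with the elementary algebras $D_{n,r}$.

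First I would set up the candidate lift. Pick lifts $t_i \in A^{\circ}$ of the images $\phi(X_i) \in A/\pi A$. Since $T$ is a topologically nilpotent unit of $A$, after replacing $n$ by a larger integer we may assume $\pi/T^n \in A^{\circ}$, so that the ring $A\langle \pi/T^n\rangle$ makes sense and is a pseudoaffinoid algebra by Lemma \ref{mapas de algebras pseudoafinoides sao topologicamente de tipo finito}; note $A \to A\langle \pi/T^n\rangle$ induces an isomorphism modulo $\pi$ onto $A/\pi A$ because $T$ is already invertible. The assignment $X_i \mapsto t_i$, $T \mapsto T$ then defines, for $n$ large enough, a continuous $\oh_K$-homomorphism $\Phi: D_{n,r} \to A\langle \pi/T^n\rangle$ lifting $\phi$; here I use that $\oh_K[[T]]\langle \pi^?/T^n\rangle \to A\langle\pi/T^n\rangle$ sends $\pi^m/T^n$ into the ring of integral elements once $n$ is chosen past the valuation of $\pi/T^n$ in $A^{\circ}$, and that inverting $T$ is harmless. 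So the existence of a lift of $\phi$ as a map of pseudoaffinoid algebras is essentially formal; the content is that it is \emph{finite} and \emph{injective}.

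For finiteness, the key input is $\oh_K$-completeness together with topological Nakayama. Since $\phi$ is finite, $A/\pi A$ is generated as a $k((T))\langle X_1,\dots,X_r\rangle$-module by finitely many elements $\bar a_1, \dots, \bar a_s$; lift them to $a_1,\dots,a_s \in A\langle\pi/T^n\rangle$ (with $a_1 = 1$). I would show $A\langle\pi/T^n\rangle = \sum_j D_{n,r}\cdot a_j$: the reductions mod $\pi$ span by hypothesis, hence $A\langle\pi/T^n\rangle = \sum_j D_{n,r} a_j + \pi A\langle\pi/T^n\rangle$, and since $A\langle\pi/T^n\rangle$ is $\pi$-adically complete (it has a noetherian, $\pi$-adically complete ring of definition, $\pi$ being topologically nilpotent but not a unit) and $D_{n,r}$ is noetherian, a standard completion/Nakayama argument upgrades this to equality — one writes each element as a convergent sum $\sum_k \pi^k(\sum_j c_{jk}a_j)$ with $c_{jk} \in D_{n,r}$, using that $\pi^N D_{n,r} \subseteq (\text{stuff})$, and invokes completeness of $D_{n,r}$ (valid since $\pi$ is topologically nilpotent in $D_{n,r}$ too) to conclude the coefficients converge. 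Hence $\Phi$ is module-finite.

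For injectivity, I would use $\oh_K$-flatness of $A$ (hence of $A\langle\pi/T^n\rangle$, a rational localisation followed by base change that preserves flatness) and flatness of $D_{n,r}$ over $\oh_K$ from Proposition \ref{dominios de ideais principais e dominios de factorizacao unica}. Let $K = \ker\Phi$. Tensoring $0 \to K \to D_{n,r} \to A\langle\pi/T^n\rangle$ with $\oh_K/\pi$ and using $\mathrm{Tor}_1^{\oh_K}(A\langle\pi/T^n\rangle, k) = 0$ gives an exact sequence $0 \to K/\pi K \to D_{n,r}/\pi \to A\langle\pi/T^n\rangle/\pi = A/\pi A$, and the last map is $\phi$, which is injective; therefore $K/\pi K = 0$, and since $K$ is finitely generated (it is a submodule of the noetherian $D_{n,r}$) and $\pi$-adically separated, topological Nakayama forces $K = 0$. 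The main obstacle, and the step I expect to require the most care, is the bookkeeping in the finiteness argument: one must check that $\pi/T^n$ genuinely lands in $A^{\circ}$ and that $D_{n,r} \to A\langle\pi/T^n\rangle$ is continuous with the claimed topology, so that the $\pi$-adic convergence of coefficients takes place inside $D_{n,r}$ with its natural (pseudoaffinoid) topology rather than merely formally; here one leans on Lemma \ref{mapas de algebras pseudoafinoides sao topologicamente de tipo finito} and the fact that $T$ being a unit makes $A\langle\pi/T^n\rangle$ and $A$ agree modulo $\pi$. Everything else is a fibrewise argument over the affinoid Noether normalisation $\phi$, which holds by the classical theory of $k((T))$-affinoid algebras.
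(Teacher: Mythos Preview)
Your injectivity argument is correct and amounts to the same thing the paper does (the paper phrases it as ``$x\in\bigcap_k\pi^kD_{n,r}$ forces $x$ to be a zero divisor by Krull, contradicting integrality''; your Tor/Nakayama version is an equivalent repackaging). The real problem is the finiteness step. Your claim that $A\langle\pi/T^n\rangle$ is $\pi$-adically complete is false: a Tate ring is essentially never complete for the adic topology generated by a topologically nilpotent element that is not a pseudouniformiser. Concretely, $T$ is a topologically nilpotent unit and $\pi/T^n$ is powerbounded, so in $A\langle\pi/T^n\rangle$ one has $\lvert\pi\rvert\leq\lvert T\rvert^n$; the series $\sum_{k\geq 0}\pi^kT^{-k^2}$ is then $\pi$-adically Cauchy but its terms have $\lvert\pi^kT^{-k^2}\rvert\leq\lvert T\rvert^{k(n-k)}\to\infty$, so the series has no limit in $A\langle\pi/T^n\rangle$. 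Having a $\pi$-adically complete ring of definition does not save you, because your iteration $a=\sum_j c_{j0}a_j+\pi a^{(1)}=\cdots$ gives no bound on the $a^{(k)}$ or the $c_{jk}$, so neither $\pi^Na^{(N)}\to 0$ in the Tate topology nor $\sum_k\pi^kc_{jk}$ convergent in $D_{n,r}$ is justified. (There is also a minor muddle in the setup: enlarging $n$ makes $\pi/T^n$ \emph{larger}, not smaller, and lifts of $\phi(X_i)$ need not lie in $A^{\circ}$ on the nose.)

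The paper sidesteps this by proving finiteness at the level of \emph{algebra} generators rather than module generators, via \cite[Lemma~1.4.3]{EtHuber}. One first arranges that the lifts $f_i$ of $\phi(X_i)$ satisfy monic equations $p_i(f_i)\in A_0$ over a ring of definition $A_0$ after passing to $A\langle\pi/T^n\rangle$ for suitable $n$ (this is exactly the step that replaces your unjustified ``pick $t_i\in A^{\circ}$''), so that $\Phi$ is well defined. Then one takes topological generators $y_j$ of $A\langle\pi/T^n\rangle$ over $D_{n,r}$, lifts the monic equations they satisfy over $k((T))\langle X_1,\dots,X_r\rangle$ modulo $\pi$, and enlarges $n$ once more so that the defects $q_j(y_j)$ land in $A^{\circ\circ}$. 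Huber's lemma then converts ``topologically finitely generated with generators satisfying monic equations with topologically nilpotent constant term'' into genuine module-finiteness, without ever invoking $\pi$-adic completeness of the Tate ring. If you want to rescue your module-generator approach, you would have to run the iteration inside suitable rings of definition and control denominators in $T$ at each step, which in the end reproduces the same bookkeeping that Huber's lemma packages.
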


\begin{proof}
Choose a ring of definition $A_0 \subseteq A$. Since its reduction modulo $\pi$ is still a ring of definition, we deduce that $(A/\pi A)^{\circ}$ is integral over $A_0/(\pi A \cap A_0)$.
	
Let $f_i\in A$, $i=1, \dots, r$ be lifts of $\phi(X_i)$, respectively. The previous paragraph gives us a monic polynomial $p_i \in A_0[S]$ such that $p_i(f_i) \in \pi/T^{k_i}A_0$. By choosing a sufficiently large $n$ and replacing $A$ with the rational localisation $A \langle \pi/T^n \rangle$, we may assume that $p_i(f_i) \in A_0$. Therefore, we may define the map of pseudoaffinoid $\oh_K$-algebras $\Phi: D_n \langle X_1, \dots, X_r \rangle \rightarrow A$ by mapping $X_i$ to $f_i$ for all $i=1, \dots, r$, and we notice that this obviously lifts the map $\phi$.
	
Let $y_j \in A$, $j=1, \dots, k$ be topological generators of $A$ as a $D_n$-algebra. Since $\phi$ is a finite map of Huber pairs, we may find monic polynomials $q_j$ with coefficients in $D_N^{\circ} \langle X_1, \dots, X_r \rangle $ such that $q_j(y_j) \in \pi/T^{l_j} A_0$. Repeating the previous trick, we may also assume that $q_j(y_j) \in A^{\circ \circ}$. Now \cite[Lemma 1.4.3]{EtHuber} yields finiteness of $\Phi$.
	
To show injectivity, assume on the contrary that there is a nonzero $x \in \ker \Phi$. As $\phi$ is injective and $A$ is $\pi$-torsion free, one duduces that $x \in \pi^k D_N \langle X_1, \dots, X_r \rangle$ for all $k \geq 1$. But Krull's intersection theorem says that such an element is a zero divisor, which contradicts integrality of $D_{n,r}$.
\end{proof}

Later we will also need the following:
\begin{lem}\label{enorme extensao completa de escalares e fielmente plana}
Let $A$ be a pseudoaffinoid $\oh_K$-algebra and $K \subseteq L$ the completion of some weakly unramified algebraic extension of $K$. Then the continuous homomorphism $ A \rightarrow A_L:=A \widehat{\otimes}_{\oh_K} \oh_L$ is faithfully flat.
\end{lem}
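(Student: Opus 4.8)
The plan is to reduce faithful flatness of $A \to A_L$ to the same statement for a pair of rings of definition, where it will follow from the completion form of the local flatness criterion together with a going-down argument.

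First I would choose a noetherian ring of definition $A_0 \subseteq A$ which is formally of finite type over $\oh_K$, together with an ideal of definition $\af \subseteq A_0$ with $\pi_K \in \af$ and a topologically nilpotent unit $T \in A_0$ satisfying $A = A_0[1/T]$ (possible by the definition of pseudoaffinoid algebras and Lemma~\ref{caracterizacao baril dentro do espaco do demonio}). Set $A_{0,L} := A_0 \widehat{\otimes}_{\oh_K} \oh_L$, the $\af$-adic completion of $A_0 \otimes_{\oh_K} \oh_L$; then $A_{0,L}$ is a ring of definition of $A_L$ and $A_L = A_{0,L}[1/T] = A_{0,L} \otimes_{A_0} A$. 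Since faithful flatness is preserved under arbitrary base change, it suffices to show that $A_0 \to A_{0,L}$ is faithfully flat. I would also record here that weak unramifiedness of $L/K$ means exactly that $\pi_K$ generates the maximal ideal of $\oh_L$, so that $\oh_L/\pi_K\oh_L = \ell$ is a field, namely an algebraic extension of $k$; moreover $\oh_L$ is $\oh_K$-flat, being torsion free over the discrete valuation ring $\oh_K$. Writing $A_0$ as a quotient of $\oh_K[[t_1,\dots,t_s]]\langle u_1,\dots,u_m\rangle$ exhibits $A_{0,L}$ as a quotient of $\oh_L[[t_1,\dots,t_s]]\langle u_1,\dots,u_m\rangle$, which is formally of finite type over the noetherian ring $\oh_L$ and hence noetherian; so $A_{0,L}$ is noetherian.

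For the flatness of $A_0 \to A_{0,L}$, the point is that $\pi_K^n \in \af^n$ annihilates $A_0/\af^n$ for every $n \geq 1$, so that
\[ A_{0,L}/\af^n A_{0,L} = (A_0/\af^n) \otimes_{\oh_K} \oh_L = (A_0/\af^n) \otimes_{\oh_K/\pi_K^n} (\oh_L/\pi_K^n\oh_L), \]
and the last ring is flat over $A_0/\af^n$ because $\oh_L/\pi_K^n\oh_L$ is flat over $\oh_K/\pi_K^n$. As $A_0$ and $A_{0,L}$ are noetherian and $A_{0,L}$ is $\af$-adically complete, this forces $A_0 \to A_{0,L}$ to be flat by the local criterion of flatness for adically complete rings (\cite[Chapitre III, \S 5]{BouAC}). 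For faithful flatness it then remains to see that $\spec A_{0,L} \to \spec A_0$ is surjective. Since $A_0$ is $\af$-adically complete, $\af$ lies in its Jacobson radical, so the maximal ideals of $A_0$ are precisely the preimages of the maximal ideals of $A_0/\af$. The map $A_0/\af \to A_{0,L}/\af A_{0,L} = (A_0/\af) \otimes_k \ell$ is faithfully flat, being the base change of the field extension $k \hookrightarrow \ell$; hence every maximal ideal of $A_0/\af$, and therefore every maximal ideal of $A_0$, lies in the image of $\spec A_{0,L}$. Finally, for an arbitrary prime $\pf \subseteq A_0$, choose a maximal ideal $\mf \supseteq \pf$ and a prime $\qf \subseteq A_{0,L}$ lying over $\mf$; by going-down for the flat map $A_0 \to A_{0,L}$ there is a prime of $A_{0,L}$ lying over $\pf$. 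Thus $A_0 \to A_{0,L}$, and hence $A \to A_L$, is faithfully flat.

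The step I expect to require the most care is the interface with the completed tensor product: one has to confirm that $A_{0,L}$ really is a ring of definition of the Huber fibre product $\spa A \times_{\spa \oh_K} \spa \oh_L$ with $A_L = A_{0,L}[1/T]$, and to check the hypotheses (noetherianness, $\af$-adic completeness, flatness modulo every $\af^n$) needed to apply the completion version of the local flatness criterion. If one prefers to avoid that last step, one can instead write $\oh_L$ as the $\pi_K$-adic completion of the directed union $\bigcup_i \oh_{L_i}$ over finite subextensions $L_i/K$: each $\oh_{L_i}$ is finite free over $\oh_K$ (as $K$ has characteristic zero), so each $A \to A \otimes_{\oh_K} \oh_{L_i}$ is finite free and in particular faithfully flat, and one concludes by passing to the filtered colimit and completing, using that the $\af$-adic completion of a flat module over the noetherian ring $A_0$ is again flat.
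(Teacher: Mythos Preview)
Your proof is correct and takes a genuinely different route from the paper's. The paper argues flatness directly at the level of the Tate ring $A$: given an injection $N \hookrightarrow M$ of finite $A$-modules, it checks that $N \otimes_{\oh_K} \oh_L \to M \otimes_{\oh_K} \oh_L$ is a \emph{strict} injection (using the Banach open mapping theorem to identify the subspace topology on $N$), so that it stays injective after completion; for faithfulness it shows surjectivity on \emph{adic} spectra via the identification $\spa(A_L) \cong \varprojlim_{M} \spa(A \otimes_{\oh_K} \oh_M)$ over finite subextensions together with a Bourbaki compactness lemma. By contrast, you descend everything to the rings of definition $A_0 \to A_{0,L}$ and run a purely commutative-algebraic argument: the local flatness criterion (applicable because $A_{0,L}$ is noetherian and $\af$-adically complete, hence $\af$-adically ideal-separated over $A_0$) gives flatness, and faithfulness follows from surjectivity on maximal ideals of $A_0/\af$ plus going-down. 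Your approach avoids both the functional-analytic input and the inverse limit of adic spectra, at the cost of having to verify that $A_{0,L}$ really is a noetherian ring of definition of the Huber fibre product with $A_L = A_{0,L}[1/T]$, which you correctly flag. One small quibble: in your alternative argument via finite subextensions, the parenthetical ``as $K$ has characteristic zero'' is unnecessary and not assumed in the paper; $\oh_{L_i}$ is finite free over $\oh_K$ simply because it is finite and torsion-free over a DVR.
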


\begin{proof}
Consider an injection of finite $A$-modules $N \rightarrow M$ endowed with the natural topology. First, we notice that the map of topological $A\otimes_{\oh_K} \oh_{L}$ modules $N \otimes_{\oh_K} \oh_L \rightarrow M\otimes_{\oh_K} \oh_L$ is a strict injection. Indeed, if $M_0$ is a $A_0$-lattice generating the topology on $M$, then the subspace topology on $N \otimes_{\oh_K} \oh_L$ is generated by the sets $s^nM_0\otimes_{\oh_K} \oh_L \cap N\otimes_{\oh_K} \oh_L$ for some topologically nilpotent unit $s$. By flatness of $\oh_K \rightarrow \oh_L$, we deduce that this intersection is just $(s^nM_0 \cap N) \otimes_{\oh_K} \oh_L$, and these sets must generate the natural topology, by the Banach open mapping theorem over $A$. Hence, if we complete these two modules, we still obtain an injection and, arguing as in \cite[Proposition 10.13]{Ati}, we conclude that this is the desired injection $N\otimes_A A_L \rightarrow M \otimes_A A_L$.
	
Now it is enough to show that the canonical map on adic spectra is surjective. Observe that we have an isomorphism of topological spaces $\spa(A \widehat{\otimes}_{\oh_K} \oh_L) \cong \varprojlim_{M} \spa(A \otimes_{\oh_K} \oh_{M})$, where $M/K$ runs over all finite subextensions of $L/K$ (cf. \cite[Remark 2.6.3]{KedLiu}). Since the fibres of $\spa(A \otimes_{\oh_K} \oh_M) \rightarrow \spa(A)$ are finite nonempty discrete topological spaces, we may apply \cite[Chapitre 1, \S9 6 Corollaire 1]{BouTG} to deduce surjectivity of the map.
\end{proof} 

\section{Hebbarkeitssätze}

In this section, we are going to prove the Riemannian Hebbarkeitssätze for pseudorigid spaces over $\oh_K$. Let us begin by stating these theorems:

\begin{thm}
Let $X$ be a normal pseudorigid space over $\oh_K$ and $Z$ be a Zariski closed subset of $X$. The following statements hold:
\begin{description}[style=unboxed, leftmargin=0cm]
\item[First Hebbarkeitssatz] If $\emph{codim}_X Z \geq 1$, then the restriction map $\oh_X^+(X) \rightarrow \oh_X^+(X\setminus Z)$ is an isomorphism of rings, i.e. every locally powerbounded function $f$ defined on the complement of $Z$ admits a unique locally powerbounded extension to $X$.
\item[Second Hebbarkeitssatz] If $\emph{codim}_X Z \geq 2$, then the restriction map $\oh_X(X) \rightarrow \oh_X(X\setminus Z)$ is an isomorphism of rings, i.e. every function $f$ defined on the complement of $Z$ admits a unique extension to $X$.
\end{description}
\end{thm}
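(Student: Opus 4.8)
\textbf{The plan} follows the blueprint of the introduction: reduce to the classical rigid case on the generic fibre, to explicit computations in the algebras $D_{n,r}$ on the special fibre, and bridge the two by Noether normalisation and finite-morphism transfer. Uniqueness is immediate, since $X$ is reduced (being normal) and $Z$ is nowhere dense (having positive codimension), so both restriction maps are injective; only surjectivity is at issue, and it may be checked on an affinoid cover of $X$, the local extensions gluing by uniqueness. Thus I reduce to $X=\spa A$ with $A$ a normal pseudoaffinoid $\oh_K$-algebra and, after splitting into connected components, a normal domain. If $\pi A=0$ then $A$ is a $k((T))$-affinoid algebra and the statement is exactly that of Bartenwerfer \cite{Bar} and Lütkebohmert \cite[Satz 1.6]{Luet}; otherwise $A$ is $\oh_K$-flat. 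Applying the classical Hebbarkeitssätze on the rigid space $X_\eta$ to $Z\cap X_\eta$ and gluing, I reduce to $Z\subseteq X_s$. Finally, covering $X=\spa A\langle\pi/T^n\rangle\cup\spa A\langle T^n/\pi\rangle$ for a topologically nilpotent unit $T$, the second chart lies in $X_\eta$ and so misses $Z$, while $A\langle\pi/T^n\rangle$ is again a normal $\oh_K$-flat pseudoaffinoid domain (its $JG$-local rings agree with those of $A$, and both rings are excellent); renaming, I assume $A$ is such and $Z\subseteq X_s$.

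The base case is $A=D_{n,r}$, a regular domain by Proposition \ref{dominios de ideais principais e dominios de factorizacao unica}. Here I argue directly with the canonical Laurent-type expansions of sections, much as Lütkebohmert does for Tate algebras: cover $X\setminus Z$ by the rational subsets carved out by the functions defining $Z$, read off from the expansion of $f$ on these subsets the coefficients of a candidate global expansion, and verify convergence by norm estimates. This is exactly where the codimension hypotheses are consumed: codimension $\ge 2$ affords the room for an iterated Hartogs-type expansion (in the $X_i$, and in $T$) in the second Hebbarkeitssatz, while in the first one uses that $D_{n,r}^{\circ}$ is cut out inside $\mathrm{Frac}(D_{n,r})$ by the valuation rings of $\spa D_{n,r}$ and the one-dimensional regular local rings at height-one primes, so that powerbounded functions extend across loci of codimension $\ge 1$.

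For general $A$, I invoke Proposition \ref{Noether levantada do chao}: setting $r=\dim A/\pi A$, a Noether normalisation $k((T))\langle X_1,\dots,X_r\rangle\hookrightarrow A/\pi A$ (finite and injective, as source and target have the same dimension) lifts, after a harmless shrinking $A\rightsquigarrow A\langle\pi/T^n\rangle$, to a finite injection $\Phi\colon D_{n,r}\hookrightarrow A$, hence a finite surjection $p\colon X\to Y:=\spa D_{n,r}$. Then $W:=p(Z)$ is Zariski closed in $Y$, and $\mathrm{codim}_Y W=\mathrm{codim}_X Z=\mathrm{codim}_X p^{-1}(W)$ since $p$ is finite and $\dim X=\dim Y$ (Corollary \ref{maximais tem altura constante num dominio ca dos nossos e a fibra especial e equidimensional}); so it suffices to extend $f$ across $p^{-1}(W)\supseteq Z$. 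In the holomorphic case a basis of $\mathrm{Frac}(A)/\mathrm{Frac}(D_{n,r})$ sandwiches the coherent sheaf $p_*\oh_X$ between two free $\oh_Y$-modules, with the denominator controlled by the different; applying the base case coordinatewise to the sections over $Y\setminus W$ and then descending into $A=\oh_X(X)$ by normality gives the extension. In the powerbounded case this transfer is weaker, because clearing denominators by the different wrecks powerboundedness whenever $\Phi$ ramifies along $W$; so I demand in addition that $\Phi$ be unramified at the finitely many height-one primes of $A$ corresponding to the codimension-one components of $Z$ (all of them regular points), which can be arranged by a genericity argument on the linear coordinates of the normalisation. Then a powerbounded $f$ descends without denominators, the first Hebbarkeitssatz for $D_{n,r}$ (where codimension $\ge 1$ suffices) extends the descended data into $D_{n,r}^{\circ}$, and integrality together with normality of $A$ and integral-closedness of $A^{\circ}$ in $A$ places $f$ in $A^{\circ}=\oh_X^+(X)$.

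\textbf{The main obstacle.} Two steps carry the weight. The first is the base case over $D_{n,r}$: the convergence bookkeeping of the Hartogs expansion for the second Hebbarkeitssatz, and the precise control of the powerbound of the extension for the first. The second — the point genuinely absent from Lütkebohmert's argument — is the construction of Noether normalisations that are unramified at prescribed regular height-one primes; this is what renders the bounded-transfer step usable and hence what makes the first Hebbarkeitssatz go through.
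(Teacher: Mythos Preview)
Your plan for the Second Hebbarkeitssatz is essentially the paper's: an explicit series computation in $D_{n,r}$ for a model locus (the paper takes $Z=V(\pi,X_1)$ rather than a general $Z$), a Noether normalisation arranged so that the image of $Z$ lands inside this model locus, and then the Holomorphic Transfer Lemma. Your base-case description is vaguer than the paper's, which only treats $Z=V(\pi,X_1)$ and pushes all the generality into the choice of normalisation, but the structure is the same.

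The First Hebbarkeitssatz is where your proposal has a genuine gap. You assert that $\Phi$ can be made unramified at the height-one primes $\pf\supset(\pi)$ of $A$ ``by a genericity argument on the linear coordinates of the normalisation''. This fails in general, for two reasons that no choice of coordinates can remove. First, if $A/\pi A$ is not generically reduced --- equivalently, if the ramification index of $\pi$ in the DVR $A_\pf$ exceeds $1$ --- then $\Phi$ is ramified at $\pf$ for \emph{every} lift $\Phi$: since $\Phi^{-1}(\pf)=(\pi)$ (the unique height-one prime of $D_{n,r}$ containing $\pi$), unramifiedness would force $\pi A_\pf=\pf A_\pf$, which is exactly what fails. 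Second, even when $A/\pi A$ is generically reduced, the residue extension $k((T))(X_1,\dots,X_r)\hookrightarrow Q(A/\pi A)$ may be inseparable (precisely when $A/\pi A$ is not geometrically reduced over $k((T))$), and then no finite injection from $k((T))\langle X_1,\dots,X_r\rangle$ is generically \'etale, so no lift $\Phi$ is unramified along the special fibre.

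The paper's proof confronts both obstructions explicitly, in six steps. After the base case (Step~1), one assumes $k$ perfect and $A/\pi A$ geometrically regular over $k((T))$; then \cite[Satz~4.1.12]{Ber} supplies a generically \'etale normalisation of the special fibre, Powerbounded Transfer extends $f$ off a codimension-$\ge 2$ locus, and the Second Hebbarkeitssatz finishes (Step~2). Step~3 reduces ``regular'' to ``geometrically regular'' by replacing $T$ with a suitable $p^n$-th root modulo $\pi$, using that $Q(A/\pi A)^{p^\infty}$ is finite over $k$. Step~4 drops to ``generically reduced'' via the Second Hebbarkeitssatz on the singular locus. Step~5 treats arbitrary $A$ (still $k$ perfect) by a Reduced Fibre Theorem argument: a finite extension $L/K$ and a normal finite $A$-algebra $B$ over $\oh_L$ with generically reduced special fibre are produced, $f$ is extended into $B$ by Step~4, and one descends by Galois invariance. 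Step~6 removes the hypothesis that $k$ be perfect by a Cohen-ring base change and a faithful-flatness argument. The phrase ``Noether normalisations unramified at certain given regular primes'' in the introduction summarises the mechanism of Steps~2--3, but it conceals the substantial reductions (Steps~4--6) needed to reach the hypotheses under which such a normalisation exists; your ``genericity argument'' does not supply these.
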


For rigid spaces, these were proven in the seventies by Bartenwerfer (cf. \cite[Riemann I]{Bar}) and Lütkebohmert (cf. \cite[Satz 1.6]{Luet}). Hence, in the following pages, we may safely assume that $\oh_K$ is a mixed characteristic DVR, $X=\spa A$ is the adic spectrum of an $\oh_K$-flat normal pseudoaffinoid domain $A$ with a connected special fibre, and $Z$ is contained in the special fibre.\footnote{Here we are implicitly using the fact that the extension of a locally powerbounded function $f$ remains locally powerbounded. One argument for this is to note that the constructible set $\{x \in X: \lvert f(x) \rvert >1 \}$ is contained in $Z$, but also contains a nonempty open subset, by \cite[Korollar 3.5.7]{BSHuber}, which is impossible.}

The following is a generalisation of \cite[Hilfssatz 1.7]{Luet}.

\begin{lem}\label{provar cenas en passant finito} \emph{(Holomorphic Transfer)} Let $\phi:A \rightarrow B$ be a finite injection of pseudoaffinoid domains and let $X=\emph{\spa}A$, $Y=\emph{\spa}B$ and $\varphi=\emph{\spa}\phi$ be the corresponding adic spaces and morphisms. Let $T \subseteq Y$ be a Zariski closed subset such that $\emph{codim}_Y(T) \geq 2$. If the natural restriction map $\oh_X(X) \rightarrow \oh_X(X\setminus\varphi(T))$ is an isomorphism, then so is $\oh_Y(Y) \rightarrow \oh_Y(Y\setminus T)$.
\end{lem}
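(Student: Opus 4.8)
The plan is to transfer the extension property along the finite map $\varphi$ by a trace argument, reducing statements about sections of $\oh_Y$ to statements about sections of $\oh_X$ over complements of the (Zariski closed) image $\varphi(T)$. First I would record the setup: since $\phi:A\to B$ is finite and $A$, $B$ are pseudoaffinoid domains, $B$ is a finite $A$-module and the fraction field extension $\operatorname{Frac}(A)\hookrightarrow\operatorname{Frac}(B)$ is finite; since $A$ is normal (it is the relevant case after reduction, and anyway $Y\to X$ finite forces us to control the ramification), we may enlarge to pass to the integral closure if needed so that the trace pairing is well behaved. The key point is that $\varphi(T)$ is again Zariski closed in $X$ (image of a Zariski closed set under a finite map, using Corollary stated after Theorem \ref{modulos coerentes sobre afinoides sao induzidos por modulos sobre as seccoes globais}), so the hypothesis ``$\oh_X(X)\to\oh_X(X\setminus\varphi(T))$ is an isomorphism'' is meaningful, and moreover one checks $\operatorname{codim}_X\varphi(T)\geq 2$ is inherited from $\operatorname{codim}_Y T\geq 2$ by finiteness (heights are preserved under finite injections of catenary domains, cf. the discussion around Proposition \ref{dominios de ideais principais e dominios de factorizacao unica}). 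Also $\varphi^{-1}(\varphi(T))\supseteq T$, and on the locus where $\varphi$ is étale the two sets agree away from codimension $\geq 2$, which is all we need.

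Next I would set up the trace. Injectivity of $\oh_Y(Y)\to\oh_Y(Y\setminus T)$ is immediate: $Y$ is a normal (hence reduced, integral) pseudoaffinoid domain, $Y\setminus T$ is a dense open, and a section vanishing on a dense open of an integral space vanishes. For surjectivity, let $g\in\oh_Y(Y\setminus T)$. Consider $d=\operatorname{disc}(B/A)\in A$, a nonzero element, and let $X'=X\setminus V(d)$, $Y'=\varphi^{-1}(X')$; over $X'$ the map $\varphi$ is finite étale, so the trace $\operatorname{Tr}_{B/A}$ gives an $\oh_{X'}$-linear map $\varphi_*\oh_{Y'}\to\oh_{X'}$. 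For each $i$, $\operatorname{Tr}_{B/A}(x^i g)$ is a function on $X'\setminus\varphi(T)$, where $x$ generates $\operatorname{Frac}(B)$ over $\operatorname{Frac}(A)$; I want to extend each of these across $\varphi(T)\cap X'$ and also across $V(d)$. The extension across $\varphi(T)$ uses the hypothesis of the lemma — but applied over the open $X'$, so I first need the hypothesis to localize, i.e. if $\oh_X(X)\to\oh_X(X\setminus\varphi(T))$ is an isomorphism then so is $\oh_{X'}(X')\to\oh_{X'}(X'\setminus\varphi(T))$ for $X'$ the complement of a Zariski closed set of codimension $\geq 1$; this is exactly the kind of statement the second Hebbarkeitssatz-style bootstrapping gives us, but to be safe I would instead run the whole argument by first extending across $\varphi(T)$ on the étale locus using the hypothesis on all of $X$ combined with the fact that $V(d)\cup\varphi(T)$ is Zariski closed, then extending across $V(d)$ separately using normality of $X$ and $\operatorname{codim} V(d)\geq 1$ — no, that needs codimension $2$ for holomorphic extension. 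The clean route: since $\operatorname{codim}_X(V(d)\cup\varphi(T))$ could be $1$, I instead argue that the traces $h_i:=\operatorname{Tr}_{B/A}(x^i g)$, a priori defined only on $X\setminus(V(d)\cup\varphi(T))$, are locally powerbounded there because $g$ is (being a section of $\oh_Y$, it is automatically in $\oh_Y$, and $\operatorname{Tr}$ of a powerbounded element by a bounded matrix is powerbounded up to a bounded denominator $d$); so $d^N h_i$ is powerbounded on $X\setminus\varphi(T)$ for suitable $N$, extends by the (already-proven, over $X$) statement for $\oh_X^+$ — or rather by the hypothesis after clearing denominators — and then one divides back by $d^N$ using normality. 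I will choose whichever of these bootstraps is cleanest given what is available at this point in the paper; the essential content is that $h_i\in\oh_X(X\setminus\varphi(T))\cong\oh_X(X)$.

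Having extended the traces, I reconstruct $g$: the Cramer-type formula expresses $g$ (over $Y'$) as a $\operatorname{Frac}(B)$-linear combination $g=\sum_i c_i x^i$ with $c_i\in d^{-1}A$ built from the $h_i$ via the inverse of the trace-pairing Gram matrix; since each $h_i$ now lives on all of $X$, the element $\tilde g:=\sum_i (c_i\circ\varphi)\, x^i$ defines a section of $\oh_Y$ over $Y\setminus(\varphi^{-1}V(d))$, a dense open, which agrees with $g$ on $Y'\setminus T$, hence on the overlap $Y\setminus(T\cup\varphi^{-1}V(d))$. It remains to see $\tilde g$ extends across $\varphi^{-1}V(d)$; but $\varphi^{-1}V(d)$ has codimension $\geq 1$ in $Y$ and $Y$ is normal, and $\tilde g$ is already known to be powerbounded there (it equals $g$, a genuine section, off codimension $\geq 2$), so... the honest statement is that $\tilde g$ and $g$ glue to a section on $Y\setminus T$ which extends the one on $Y\setminus(T\cup\varphi^{-1}V(d))$, and since we have already shown injectivity and since $Y$ is integral and normal, a function regular off a codimension $\geq 1$ closed set that is bounded there extends — this is Bartenwerfer's first Hebbarkeitssatz for the rigid generic/special fibres, available by the remark that we have reduced to $Z$ in the special fibre and the rigid cases are known. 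Assembling: $g$ extends to $\oh_Y(Y)$, proving surjectivity.

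\medskip
\noindent\textbf{Main obstacle.} The delicate point is the interaction between the ramification locus $V(d)$ and the image $\varphi(T)$: the trace argument only works on the étale locus, so one has to extend the traces $h_i$ across \emph{two} closed sets whose union may have codimension only $1$, and the lemma's hypothesis is stated only for the single closed set $\varphi(T)$ of codimension $\geq 2$. Bridging this gap — getting the $h_i$ onto all of $X$ — is where one must invoke normality of $X$ together with either a powerboundedness estimate on the traces (so that the first-Hebbarkeitssatz-type extension across $V(d)$ applies) or a prior localization of the hypothesis; handling this carefully, and checking that the reconstructed $\tilde g$ is a bona fide section of the structure sheaf (not merely a rational function) across $\varphi^{-1}V(d)$, is the heart of the proof. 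Everything else — image of a closed set is closed, preservation of codimension under finite injections, injectivity from integrality — is formal given the machinery already set up.
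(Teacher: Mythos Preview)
Your proposal takes a genuinely different route from the paper and, as written, has a real gap in the finish.

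The paper avoids traces entirely. It fixes $b_1,\dots,b_n\in B$ spanning $Q(B)$ over $Q(A)$, so that $gB\subseteq Ab_1\oplus\dots\oplus Ab_n$ for some nonzero $g\in A$; then for $f\in\oh_Y(Y\setminus T)$ one has $gf=\sum f_ib_i$ with $f_i\in\oh_X(X\setminus\varphi(T))$ \emph{directly}---no \'etale locus, no discriminant---simply because multiplication by $g$ factors $\varphi_*\oh_Y\xrightarrow{\cdot g}\oh_X^{\oplus n}\hookrightarrow\varphi_*\oh_Y$. By hypothesis each $f_i$ extends to $A$, giving $\sum f_ib_i\in B$. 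The finish is pure commutative algebra: since $\height J\geq 2$, every height-$1$ prime $\qf\subset B$ is contained in a maximal ideal $\nf$ with $y_\nf\notin T$; there $f$ is already regular, so $\sum f_ib_i=gf\in g\,\oh_{Y,y_\nf}$, and faithful flatness of $B_\nf\to\oh_{Y,y_\nf}$ gives $\sum f_ib_i\in gB_\nf\subseteq gB_\qf$. Normality of $B$ then yields $f\in\bigcap_{\height\qf=1}B_\qf=B$.

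Two concrete problems with your version. First, the ``main obstacle'' you flag is a phantom: once you assume $A$ normal, the field-theoretic trace $\tr_{Q(B)/Q(A)}$ already sends $B$ into $A$ and sheafifies to a map $\varphi_*\oh_Y\to\oh_X$ on all of $X$, not merely on the \'etale locus; so $h_i=\tr(x^ig)\in\oh_X(X\setminus\varphi(T))$ without any extension across $V(d)$, and the hypothesis gives $h_i\in A$ immediately. The discriminant only enters when you invert the Gram matrix to recover $g$, yielding $dg\in B$. Second---and this is the actual gap---your finish is circular: you invoke the first Hebbarkeitssatz to push $\tilde g$ across $\varphi^{-1}V(d)$, but this lemma is a tool \emph{for proving} the pseudorigid Hebbarkeitss\"atze, and the already-known rigid case does not apply since $\varphi^{-1}V(d)$ has no reason to lie in the generic fibre. (Note also that a section of $\oh_Y$ is not automatically powerbounded, so the boundedness input you reach for is not there.) The correct finish, given $dg\in B$ and $g$ regular off codimension $\geq 2$, is precisely the paper's height-$1$ localization argument; once you see that, the trace detour offers nothing over the direct embedding, and costs you the extra hypothesis that $A$ be normal.
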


\begin{proof}
Injectivity of the map is immediate, due to connectivity of $Y$ and flatness of rational localisations. Now let $b_1, \dots, b_n \in B$ span a basis of $Q(B)/Q(A)$ and consider the free $A$-submodule $M=Ab_1 + \dots +Ab_n \subseteq B$. By construction, $B/M$ is an $A$-torsion module, so one can find a nonzero $g \in A$ such that $gB \subseteq M$. This yields a factorisation of the multiplication-by-$g$ map on the coherent sheaf $\phi_* \oh_Y$:
$$\phi_* \oh_Y \xrightarrow{\cdot g} \oh_X^{\oplus n} \hookrightarrow \phi_* \oh_Y $$
We now pick some function $f \in \oh_Y(Y\setminus T)$ and write $gf=\sum_{i=1}^{n}f_ib_i$, where $f_i \in \oh_X(X\setminus \phi(T))$, which holds over $Y\setminus \phi^{-1}(\phi(T))$. By assumption, every coordinate function $f_i$ extends to $X$, so $gf$ has an extension of the form $\sum_{i=1}^{n} f_ib_i$.
	
Let $J$ be an ideal whose vanishing set equals $T$. Since $\text{ht}\,J \geq 2$, we have that $J \nsubseteq \qf$ for all prime ideals $\qf \subseteq B$ of height $1$. As $B$ is a Jacobson ring, there is a maximal ideal $\pf \subseteq \nf \subseteq B$ not containing $I$. Let $y_{\nf} \in JG(Y)$ be the point corresponding to $\nf$. Then $(\sum_{i=1}^{n}f_ib_i) \oh_{Y,y_{\nf}}\subseteq g\oh_{Y,y_{\nf}}$, because $f$ is defined around $y_{\nf}$. Now faithful flatness of $B_{\nf} \rightarrow \oh_{Y,y_{\nf}}$ yields $(\sum_{i=1}^{n}f_ib_i) B_{\nf} \subseteq gB_{\nf}$ and thus $f \in B_{\nf} \subseteq B_{\qf}$. Applying the identity $B=\cap_{\text{ht}\,\qf=1} B_{\qf}$, we conclude that $f \in B$. 
\end{proof}

\begin{proof}[Proof of the second Hebbarkeitssatz]
We start by treating the case $A=D_n \langle X_1, \dots, X_m \rangle$ and $I= (\pi, X_1)$. Let $g$ be a function defined over $X\setminus V(I)$ and write it as a series $\sum_{i=0}^{+\infty} a_i X_1^i$ where $a_i \in \bigcap_{l \geq 0} K \langle T, \pi^n/T, T^l/\pi, X_2, \dots, X_m \rangle$. On the other hand, $g \in D_n \langle X_1^{\pm 1}, X_2, \dots, X_m \rangle$, so it can also be written as $\sum_{i=-\infty}^{i=+\infty} b_i X_1^i$, where $b_i \in D_n \langle X_2, \dots, X_m \rangle$. By uniqueness of the coefficients, we have $a_i=b_i$ for all $i \in \ZZ$, so $b_i =0$ when $i<0$ and $g=\sum_{i=0}^{+\infty} b_i X_1^i \in A$.
	
In the general case, we apply Noether's normalisation lemma to find a finite injection $\phi: k((T)) \langle X_1, \dots, X_r \rangle \rightarrow A/\pi A$ of $k((T))$-affinoid algebras such that $(X_1) \subseteq \phi^{-1}(I/\pi A)$. By Proposition \ref{Noether levantada do chao}, we can lift this to a finite injection $\Phi: D_{n,r}\rightarrow A \langle \pi/T^n \rangle$ such that $(\pi, X_1) \subseteq \Phi^{-1}(I)$. Since $A/\pi A$ is connected by hypothesis, there is a unique connected component $B$ of $A \langle \pi/T^n \rangle$ with nonempty special fibre. This yields a finite injection $D_{n,r} \rightarrow B$ of normal pseudoaffinoid domains, to which we may apply the Holomorphic Transfer Lemma.
\end{proof}

In order to show the first Hebbarkeitssatz, we want to find a powerbounded analogue of the Holomorphic Transfer Lemma. This requires recalling some properties of the trace map $tr_{B/A}$ of a finite flat homomorphism of noetherian rings. If $A$ is a normal domain, and $A \rightarrow B$ is generically étale, we can define the Dedekind different $\mathcal{D}_{B/A}$ in the usual way (cf. \cite[Tag 0BW0]{StacksProj}). This coincides with a more general definition of the different (cf. \cite[Tag 0BW4]{StacksProj} and \cite[Tag 0BW5]{StacksProj}) and its importance lies in the fact that its vanishing set coincides with the ramification locus of $B$ over $A$, by \cite[Tag 0BTC]{StacksProj}.

\begin{lem}\label{provar cenas en passant finito 1} \emph{(Powerbounded Transfer)} 
Let $\phi: A \rightarrow B$ be a finite injection of normal pseudoaffinoid domains and let $X=\emph{\spa}A$, $Y=\emph{\spa}B$ and $\varphi=\emph{\spa}(\phi)$ be the corresponding adic spaces and morphism. Let $T \subseteq Y$ be a Zariski closed subset of positive codimension. If the restriction map $A^{\circ} \rightarrow \oh_X^+(X\setminus \varphi(T))$ is an isomorphism, then every locally powerbounded function $f \in \oh_Y^+ (Y\setminus T)$ extends uniquely to the intersection of the Zariski unramified locus of $\phi$ and the preimage of the Zariski locally free locus of $\phi$. 
\end{lem}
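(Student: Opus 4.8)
The plan is to follow the proof of the Holomorphic Transfer Lemma, with the bare module bookkeeping there replaced by a trace computation governed by the different, in the spirit of \cite[Hilfssatz 1.7]{Luet}. Uniqueness is immediate: since $B$ is a domain and $U\setminus T$ is dense in $U$, both $\oh_Y^+(U)$ and $\oh_Y^+(U\setminus T)$ inject into $Q(B)$ by flatness of rational localisations, so the restriction map is injective. For existence we may assume $Q(B)/Q(A)$ is separable --- otherwise the Zariski unramified locus of $\varphi$ is empty and there is nothing to prove --- so that $\tr_{Q(B)/Q(A)}$ is nondegenerate and the Dedekind different $\df_{B^{\circ}/A^{\circ}}$ makes sense; here one uses that, as $A$ and $B$ are normal with excellent rings of definition (Proposition \ref{pseudo-rigido e Jacobson-Tate e excelente}), the rings $A^{\circ}$ and $B^{\circ}$ are excellent normal domains with $Q(A^{\circ})=Q(A)$, $Q(B^{\circ})=Q(B)$, and $B^{\circ}$ is the module-finite integral closure of $A^{\circ}$ in $Q(B)$.

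Now fix $f\in\oh_Y^+(Y\setminus T)$; as $Y\setminus T$ is dense, $f$ defines an element of $Q(B)$, and the key step is to show $\tr_{Q(B)/Q(A)}(fb)\in A^{\circ}$ for every $b\in B^{\circ}$. Over the Zariski locally free locus $V\subseteq X$ of $\varphi$ the module trace $\varphi_*\oh_Y\to\oh_X$ is defined and sends the section $fb$ of $\varphi_*\oh_Y$ over $X\setminus\varphi(T)$ to a section of $\oh_X$ over $V\cap(X\setminus\varphi(T))$. Since $A$ is normal, $\varphi$ is flat in codimension $1$ (a finite torsion-free module over a discrete valuation ring is free), so $X\setminus V$ has codimension at least $2$, and the already-proven second Hebbarkeitssatz, applied to the normal pseudorigid space $X\setminus\varphi(T)$, extends $\tr(fb)$ to a section over all of $X\setminus\varphi(T)$. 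Evaluating this trace at a point $x$ as a sum over the finitely many preimages of $x$, all lying in $Y\setminus T$ where $f$ and $b$ are powerbounded, the ultrametric inequality gives $\lvert\tr(fb)(x)\rvert\le 1$, so $\tr(fb)\in\oh_X^+(X\setminus\varphi(T))=A^{\circ}$ by hypothesis. This says precisely that $f$ lies in the inverse different $\df_{B^{\circ}/A^{\circ}}^{-1}=\{z\in Q(B):\tr_{Q(B)/Q(A)}(zB^{\circ})\subseteq A^{\circ}\}$. To finish, let $y_0\in\text{JG}(U)$, with $U$ the intersection of the Zariski unramified locus of $\varphi$ and $\varphi^{-1}(V)$, and let $\mf'\subseteq B^{\circ}$, $\mf\subseteq A^{\circ}$ be the corresponding maximal ideals; a topologically nilpotent unit of $A$ is a unit in $A^{\circ}_{\mf}$, so $A^{\circ}_{\mf}=A_{\mf}$ and $B^{\circ}_{\mf'}=B_{\mf'}$, whence $A^{\circ}\to B^{\circ}$ is flat at $\mf'$ (as $\varphi$ is locally free at $\varphi(y_0)$) and unramified there (as $\varphi$ is unramified at $y_0$), hence finite étale after localisation and completion, so $\df_{B^{\circ}/A^{\circ}}$ is locally the unit ideal at $\mf'$ (cf. \cite[Tag 0BTC]{StacksProj} and compatibility of the different with localisation and completion). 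Thus $f$, lying in $\df_{B^{\circ}/A^{\circ}}^{-1}$, localises into $\oh_{Y,y_0}$; running over the dense set $\text{JG}(U)$ and using $C=\bigcap_{\mf\in\text{Max}\,C}C_{\mf}$ for the normal pseudoaffinoid rings $C$ with $\spa C\subseteq U$, we get a section $\tilde f\in\oh_Y(U)$ restricting to $f$ on $U\setminus T$; finally $\tilde f$ is powerbounded, since $\{x\in U:\lvert\tilde f(x)\rvert>1\}$ is constructible and contained in the nowhere dense set $U\cap T$, hence empty by \cite[Korollar 3.5.7]{BSHuber}.

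I expect the main obstacle to be the middle of the second paragraph: producing $\tr(fb)$ as a genuine bounded section on all of $X\setminus\varphi(T)$, and not merely on the locally free locus, so that the hypothesis $\oh_X^+(X\setminus\varphi(T))=A^{\circ}$ can be invoked. This is exactly where the local freeness assumption enters (to define $\varphi_*\oh_Y\to\oh_X$), where normality of $A$ is used (to push the non-flat locus of $\varphi$ into codimension $\ge 2$), and where the second Hebbarkeitssatz is needed again; unramifiedness only serves to trivialise the different in the final step. A secondary but real nuisance is the bookkeeping matching the loci where $\varphi$ is locally free, resp. unramified, with flatness, resp. unramifiedness, of $A^{\circ}\to B^{\circ}$, and keeping track of the subrings of powerbounded elements throughout.
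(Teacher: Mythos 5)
Your proof is correct and follows essentially the same route as the paper's: extend the trace functions $\tr(fb)$ across the codimension-$\geq 2$ non-locally-free locus using the second Hebbarkeitssatz, use the hypothesis to land them in $A^{\circ}$, and recover $f$ on the unramified locus via triviality of the different there. The paper packages the last step with an explicit dual basis $b_k^{\vee}$ and the identity $f=\sum_k \tr(fb_k)b_k^{\vee}$ rather than with the inverse different $\df_{B^{\circ}/A^{\circ}}^{-1}$, but this is the same computation.
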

\begin{proof}
Without loss of generality, we may assume that $\phi$ is generically étale. Let $g \neq 0 \in A$ be an element such that $B[g^{-1}]$ is a finite free $A[g^{-1}]$-module with basis $b_{1}, \dots , b_{n} \in B^+$. Over the fraction field $K=Q(A)$, this gives us a dual basis $b_{1}^{\vee}, \dots, b_{n}^{\vee} \in L=K\otimes_A B$. 
	
Consider now the trace elements $f_k=\tr(fb_{k})$ which are defined over the open set $X\setminus (\phi(T) \cup Z))$, where $Z$ is the complement of the locally free locus. Since $A$ and $B$ are normal, $Z$ has codimension at least $2$, so $f_k$ extends to the complement of $\phi(T)$. Moreover, for generically étale extensions the trace is given by a sum of Galois conjugates, so it preserves integrality, thus the functions $f_k$ are locally powerbounded. By assumption, we get $f_k \in A^{\circ}$.
	
Note that for all rational subset $V \subseteq D_X(g)$, we have an identity $f=\sum_{k=1}^n f_{k}b_{k}^{\vee}$ inside the ring $\oh_Y(\phi^{-1}(V))\otimes_A K$, by definition of the dual basis. Let $d \in B$ be an element in the different $\mathcal{D}_{B[g^{-1}]/A[g^{-1}]}$, which implies $b_{i}^{\vee} \in B[d^{-1}]$ for all $i=1, \dots, n$. Therefore, the identity above yields an extension of $f$ to the Zariski open subset where $A[g^{-1}] \rightarrow B[g^{-1}]$ is unramified.
\end{proof}

\begin{proof}[Proof of the first Hebbarkeitssatz] The proof is going to be divided into several steps.
\begin{description}[style=unboxed,leftmargin=0cm]
\item[Step 1]
Assume $A=D_n \langle X_1, \dots, X_r\rangle$. Let $f \in \oh_X^+(X\setminus Z)$ be a locally powerbounded function on the generic fibre. The latter space is a union of increasingly large semiopen rigid annuli, for which we have a canonical choice of norm, as in \cite[9.7.1, page 400]{BGR}. This norm is power multiplicative, which means that powerbounded elements are those with norm $\leq 1$. Thus $f \in \bigcap_{n<k \in \NN} \oh_K \langle T, \pi/T^n, T^k/\pi, X_1, \dots, X_r \rangle$, and, if one looks at the coefficients of its representation as a Laurent series in $T$, one concludes that $f \in D_{n,r}^{\circ}$.

\item[Step 2] Until Step 6, assume that the residue field $k$ is perfect. Here, we fix a topologically nilpotent unit $T$ of $A$ and assume that $A/\pi A$ is integral and geometrically regular over $k((T))$. Then \cite[Satz 4.1.12]{Ber} provides us a finite injection $\phi: k((S)) \langle X_1, \dots, X_r \rangle \rightarrow A$, which is generically étale. We can lift this to a finite injection $\Phi: D_{n,r}  \rightarrow A \langle \pi/T^n\rangle$, and we note that its ramification locus does not contain the special fibre. Replacing $A \langle \pi/T^n \rangle$ by the adequate connected component as in the proof of the second Hebbarkeitssatz, Lemma \ref{provar cenas en passant finito 1} shows now that $f$ is defined everywhere up to a Zariski closed subset of codimension at least $2$. The result follows now by the second Hebbarkeitssatz.

\item[Step 3] Assume that $A/\pi A$ is a regular domain. We note that $A/\pi A$ is geometrically reduced over $k((T))$ if and only if $Q(A/\pi A)\otimes_{k((T))} k((T^{1/p}))=Q(A/\pi A)[X]/(X^p-T)$ is reduced, i.e, if and only if $ T\notin Q(A/\pi A)^p$. 

At this point, we need to know that for an integral $k((T))$-affinoid algebra $B$, the field $Q(B)^{p^{\infty}}$ is a finite extension of $k$. We will give a brief sketch of the argument. First, one shows that if $L/K$ is a finite field extension, then so is $L^{p^{\infty}}/K^{p^{\infty}}$. By the Noether normalisation lemma, one is then reduced to  the case where $B=k((T)) \langle X_1, \dots, X_r \rangle$. Using the unique factorisation property of $B$ and looking at the Gauß norm, we deduce that any nonzero $b \in Q(B)^{p^{\infty}}$ must lie in $(B^{\circ})^{\times}$, whence its reduction $\overline{b} \in \overline{B}=B^{\circ}/B^{\circ\circ}=k[X_1, \dots, X_r]$ is also a $p^{\infty}$-power and thus equal to some constant $c \in k$. Running the same argument with $b-c$ instead, we conclude that it must be zero.

Therefore, there is a maximal $n \in \NN$ such that $T \in Q(A/\pi A)^{p^n}$ and, using regularity of $A/\pi A$, we find an element $S \in A$ such that $S^{p^n}\equiv T$ mod $\pi$. After shrinking $X$, we may assume that $S$ is a topologically nilpotent unit of $A$, thus turning $A/\pi A$ into a geometrically reduced $k((S))$-affinoid algebra and we return to Step 2.

\item[Step 4] Assume that $A/\pi A$ is generically reduced. By excellency of affinoid algebras, the singular locus of $\spec A/\pi A$ has codimension $2$ in $X$, so this case follows from the second Hebbarkeitssatz and the Step 3.

\item[Step 5] Here we only assume that $k$ is perfect. We claim that there is a finite extension $L/K$ and a finite injection $A \rightarrow B$, where $B$ is a normal pseudoaffinoid $\oh_L$-domain with generically reduced special fibre. This is just some variant of the Reduced Fibre Theorem as in \cite[Tag 09IL]{StacksProj} and one may basically run the same proof. 

Indeed, let $\pf_i \subseteq A$, $i=1, \dots, n$ be the minimal primes of the special fibre. The statement on $p^{\infty}$-powers from Step 3 allows us to apply \cite[Tag09F9]{StacksProj} to the extensions $\oh_K \rightarrow A_{\pf_i}$ and obtain a common solution $L$ in the sense of \cite[Tag 09EN]{StacksProj}. Define now $B$ as a connected component of $(A \otimes_{\oh_K} \oh_L)^{\text{norm}}$, on which $A$ injects. Then $A \rightarrow B$ is a finite injection and $B$ is a pseudoaffinoid domain over $\oh_L$. We now observe that the minimal primes $\qf_j$ of $\spec B/\pi_L B$ lie over the $\pf_i$, due to Corollary \ref{maximais tem altura constante num dominio ca dos nossos e a fibra especial e equidimensional} and dimension reasons. By definition of a solution, $\oh_L \rightarrow B_{\qf_j}$ is formally smooth, which implies that the special fibre of $B$ is generically reduced.

Let now $f$ be a locally powerbounded function on $X$. Then Step 4 yields $f \in B$ and, if one looks at the integral closure $C$ of $B$ in the Galois hull of $Q(B)/Q(A)$, one deduces that $f$ is invariant by the Galois action (since this is true in the generic fibre), so $f \in A$.

\item[Step 6] Here, we drop the assumption that $k$ is perfect. We assume without loss of generality that we have a finite injection $D_n(\oh_K) \langle X_1, \dots, X_r \rangle \rightarrow A$. Let $\oh_K \rightarrow \oh_L$ be a local homomorphism of complete DVR's which is weakly unramified and lifts the residue field extension $k \rightarrow k^{\text{perf}}$. There is always one such choice, by the theory of Cohen rings. We now look at the following pushout diagram in the category of rings:
$$
\begin{tikzcd}
D_{n,r}(\oh_K) \arrow[hookrightarrow]{r} \arrow{d} & A \arrow{d}\\
D_{n,r}(\oh_L) \arrow[hookrightarrow]{r}  & A_L
\end{tikzcd}
$$
By Lemma \ref{enorme extensao completa de escalares e fielmente plana}, the vertical arrows are faithfully flat, so the bottom arrow is also injective.
	
Let $B$ be the normalisation of $A_L$ in the product of fields $Q(A) \otimes_{Q(D_{n,r}(\oh_K))} Q(D_{n,r}(\oh_L))$. By excellency, $B$ is finite over $A_L$ and the composition $A \rightarrow A_L \rightarrow B$ is injective. We now fix a basis $a_1, \dots, a_k$ of $Q(A)/Q(D_{n,r}(\oh_K))$. Arguing as in the proof of the Holomorphic Transfer Lemma, we find nonzero elements $g \in D_{n,r}(\oh_K)$ and $h \in D_{n,r}(\oh_L) $, the latter of which not divisible by $\pi$, such that $gA \subseteq D_{n,r}(\oh_K)a_1 \oplus\dots \oplus D_{n,r}(\oh_K)a_k$, $h \notin \pi D_{n,r}(\oh_L) $ and $gh B \subseteq D_{n,r}(\oh_L)a_1 +\dots +D_{n,r}(\oh_L)a_k$. 
	
Let $f$ be a locally powerbounded function on $X\setminus Z$. We have an identity $gf=\sum f_i a_i$, where the $f_i$ are functions on $\spa D_{n,r}(\oh_K)\setminus V(\pi)$. On the other hand, $f$ extends to $\spa B$ by Step 5, so we have another identity $hgf=\sum \tilde{f_i}a_i$, where the $\tilde{f_i}$ belong to $D_{n,r}(\oh_L)$. By uniqueness of coefficients, one gets $\tilde{f_i}=hf_i$ on $\spa D_{n,r}(\oh_L)\setminus V(\pi)$, so $f_i \in D_{n,r}(\oh_L)$, by the second Hebbarkeitssatz. This shows that the $f_i$ are locally bounded, so by Step 1, we also deduce that $f_i \in D_{n,r}(\oh_K)$. In conclusion, $f \in Q(A) \cap B$. 

Looking at the characteristic polynomial $p$ of $f$ over $Q(D_{n,r}(\oh_K))$, we see that it has coefficients in $D_{n,r}(\oh_L)$. But faithful flatness of $D_{n,r}(\oh_K) \rightarrow D_{n,r}(\oh_L)$ tells us that $Q(D_{n,r}(\oh_K)) \cap D_{n,r}(\oh_L)=D_{n,r}(\oh_K)$ so $f \in A$.\end{description}\end{proof}

\begin{rem}
It is also possible to prove the first Hebbarkeitssatz by adapting the formal methods of \cite[Section 7.3]{deJong} to pseudoaffinoid algebras and use our version of Noether's normalisation.
\end{rem}
\section{Applications to formal schemes}

In this section, we will give some applications of the previous results to formal schemes. These will be recurrently and without further distinction regarded as adic spaces, following \cite{FRHuber}. 

Recall that, according to \cite{TameFundGrp}, a formal scheme is reduced (resp. normal, resp. regular) if their local rings are. If $\Xf=\spf A$ with $A$ an excellent ring, then $\Xf$ is reduced (resp. normal, resp. regular) if and only if $A$ is. 

\begin{lem}
Let $\Xf$ be a normal excellent formal scheme, which is locally monogeneous. Then, the natural map $\oh_{\Xf}(\Xf) \rightarrow \oh_{\Xf_a}^+(\Xf_a)$ is an isomorphism of topological rings.
\end{lem}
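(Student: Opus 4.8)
The plan is to reduce to the formally affine case and then observe that, once the ideal of definition is principal, the analytic locus is a single rational subdomain, after which the statement becomes a near-tautology governed by normality.

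First I would record that both $\mathfrak{U}\mapsto\oh_{\Xf}(\mathfrak{U})$ and $\mathfrak{U}\mapsto\oh_{\Xf_a}^{+}(\mathfrak{U}_a)$ are sheaves on $\Xf$ (the second because $\oh^{+}$ is a sheaf on analytic adic spaces and $(\mathfrak{U}\cap\mathfrak{V})_a=\mathfrak{U}_a\cap\mathfrak{V}_a$), and that the natural restriction is a morphism between them. Since the formally affine opens $\spf A$ whose ideal of definition is principal form a basis of $\Xf$, it suffices to treat $\Xf=\spf A$, where $A$ is a normal excellent adic ring whose ideal of definition is generated by a single element $\pi$. Here $A$ is a normal ring (excellence of $A$ lets us pass from normality of $\Xf$ to that of $A$), hence a finite product of normal domains; since $\bigcap_n\pi^nA=0$, the image of $\pi$ in each factor is nonzero, so $\pi$ is a non-zero-divisor.

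Next I would identify $\Xf_a$. As $(\pi)$ is the ideal of definition, $A^{\circ\circ}=\sqrt{\pi A}$, so a continuous valuation on $A$ is non-analytic precisely when it kills $\pi$; thus $\Xf_a=\{x\in\spa(A,A):|\pi(x)|\neq 0\}$ is the rational subdomain with ring of functions $\oh_{\Xf_a}(\Xf_a)=A[1/\pi]$, a Tate ring having $A$ as a ring of definition and $\pi$ as pseudo-uniformiser, already complete because $A$ is $\pi$-adically complete. By the formula for $\oh^{+}$ on a rational subset, $\oh_{\Xf_a}^{+}(\Xf_a)$ is the integral closure of $A$ in $A[1/\pi]$; since $A$ is normal and $\pi$ is a non-zero-divisor, $A[1/\pi]$ embeds into the total ring of fractions of $A$, in which $A$ is integrally closed, so $\oh_{\Xf_a}^{+}(\Xf_a)=A$. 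Under these identifications the restriction map $\oh_{\Xf}(\Xf)=A\to\oh_{\Xf_a}^{+}(\Xf_a)=A$ is the identity, and it is a homeomorphism: both sides carry the $\pi$-adic topology, the target as the subspace topology inherited from $A[1/\pi]$. Gluing over the chosen basis recovers the statement for all of $\Xf$, still as topological rings since the inverse-limit topologies on the two sides match.

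The only substantive point is the remark that a principal ideal of definition collapses the analytic locus to a single rational subdomain, so that normality alone pins down its ring of integral sections; everything else is sheaf-theoretic and topological bookkeeping. The step most prone to a slip is the last one — checking that the subspace topology on $A\subseteq A[1/\pi]$ and the inverse-limit topology coming from the affine cover both reproduce the adic topology of $\oh_{\Xf}(\Xf)$ — but this is routine. This lemma functions as the base case for the general (non-monogeneous) statement, which one reaches by dévissage using Zariski's Main Theorem together with the theorem on formal functions.
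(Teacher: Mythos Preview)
Your proof is correct and follows exactly the route of the paper: reduce to $\Xf=\spf A$ with $A$ normal and principal ideal of definition $(s)$, identify $\Xf_a=\spa(A[1/s],A)$, and observe that $\oh_{\Xf_a}^{+}(\Xf_a)=A$ by normality. The paper dispatches this in two sentences (``the map is clearly an isomorphism''); you have simply unpacked the word \emph{clearly}, including the sheaf-theoretic reduction, the integral-closure description of $\oh^{+}$, and the topological check, none of which the paper bothers to write out.
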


\begin{proof}
We may assume that $\Xf=\spf A$, where $A$ is a normal domain with ideal of definition $sA$. Then $(\spf A)_a=\spa(A[1/s], A)$, and the map is clearly an isomorphism.
\end{proof}

Our first goal is to extend this result to all normal excellent formal schemes, which are nowhere discrete. For that, we will need to use normalised admissible blow-ups and Zariski's Main Theorem.

\begin{prop}\label{estender para pontos nao analiticos}
Let $\Xf$ be a normal excellent formal scheme, which is nowhere discrete. Then, the restriction map $\oh_{\Xf}(\Xf) \rightarrow \oh_{\Xf_a}^+(\Xf_a)$ is an isomorphism of topological rings.
\end{prop}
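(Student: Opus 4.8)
The plan is to deduce the statement from the locally monogeneous case already settled in the previous lemma, by passing to a normalised blow-up of the ideal of definition and then descending again along a proper morphism. By the sheaf property on both sides the assertion may be checked locally, so I would first assume $\Xf=\spf A$; localising further we may take $A$ normal, excellent and connected, hence an integral domain (using the remark preceding the previous lemma identifying normality of $\spf A$ with that of $A$). Writing $I\subseteq A$ for the ideal of definition, the hypothesis that $\Xf$ is nowhere discrete amounts to $I\neq 0$, equivalently to the open subscheme $\spec A\setminus V(I)$ being dense in $\spec A$; note that this subscheme contains the analytic locus and is exactly the locus over which the blow-up introduced below is an isomorphism.

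Next I would form the formal blow-up $b':\Xf'\to\Xf$ of $I$, followed by the normalisation $b'':\Xf''\to\Xf'$. By construction $I\oh_{\Xf'}$ is an invertible ideal of definition, so $\Xf'$ is locally monogeneous, and hence so is $\Xf''$, since $b''$ is finite by excellence; moreover $\Xf''$ is normal and excellent. The crucial geometric point --- the analogue for adic spaces of the fact that admissible blow-ups do not alter the generic fibre --- is that $b'$ restricts to an isomorphism $\Xf'_a\cong\Xf_a$: the ideal of definition is locally principal on the analytic locus (on $\{\lvert f_i\rvert\neq 0\}$ it is generated by the topologically nilpotent unit $f_i$), so the blow-up changes nothing there, and equivalently the exceptional locus $V(I\oh_{\Xf'})$ is disjoint from $\Xf'_a$. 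Since $\Xf'$ coincides with the normal scheme $\Xf$ in a neighbourhood of $\Xf'_a$, the normalisation $b''$ is an isomorphism there as well, giving $\Xf''_a\cong\Xf'_a\cong\Xf_a$. Applying the previous lemma to the normal, excellent, locally monogeneous formal scheme $\Xf''$ then yields isomorphisms of topological rings $\oh_{\Xf''}(\Xf'')\cong\oh^{+}_{\Xf''_a}(\Xf''_a)\cong\oh^{+}_{\Xf_a}(\Xf_a)$.

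It remains to identify $\oh_{\Xf}(\Xf)$ with $\oh_{\Xf''}(\Xf'')$, that is, to prove $b_{*}\oh_{\Xf''}=\oh_{\Xf}$ for the proper morphism $b=b'\circ b''$ (a blow-up followed by a finite morphism). Here I would invoke the theorem on formal functions: it produces the Stein factorisation $\Xf''\to\Zf\to\Xf$ with $\Zf\to\Xf$ finite and $\oh_{\Zf}$ equal to the pushforward of $\oh_{\Xf''}$ along $\Xf''\to\Zf$, so that $b_{*}\oh_{\Xf''}=(\Zf\to\Xf)_{*}\oh_{\Zf}$. The finite morphism $\Zf\to\Xf$ is an isomorphism over the dense open $\spec A\setminus V(I)$ over which $b$ already is, so on global sections it is a finite, generically injective, generically bijective $A$-algebra inside $Q(A)$; as $A$ is a normal domain this forces it to be an isomorphism (a form of Zariski's Main Theorem). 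Hence $b_{*}\oh_{\Xf''}=\oh_{\Xf}$, in particular $\oh_{\Xf}(\Xf)\cong\oh_{\Xf''}(\Xf'')$ as topological rings, and composing with the isomorphisms of the previous paragraph --- after the routine check that all the identifications are compatible with the restriction maps --- shows that $\oh_{\Xf}(\Xf)\to\oh^{+}_{\Xf_a}(\Xf_a)$ is an isomorphism of topological rings.

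The step I expect to be the main obstacle is the bridge between the formal-scheme-theoretic blow-up and the adic-analytic picture: one must check carefully that $b'$ induces an isomorphism on analytic loci, that the exceptional divisor lies in the non-analytic locus, and that normalisation does not disturb this --- all for general excellent adic rings rather than in the classical setting of formal schemes topologically of finite type over a valuation ring, where such statements are standard. Once this compatibility is in place, the remaining ingredients --- finiteness of the normalisation, coherence of $b_{*}$ via the theorem on formal functions, and the normality argument --- are routine.
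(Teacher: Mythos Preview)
Your proposal is correct and follows the same strategy as the paper: reduce to the affine case, form the normalised blow-up along the ideal of definition to get a normal locally monogeneous formal scheme, apply the previous lemma there, and descend using Zariski's Main Theorem together with the theorem on formal functions. The paper cites \cite[(1.1.12)]{EtHuber} for precisely the compatibility you flag as the main obstacle (that admissible blow-ups induce isomorphisms on analytic loci), so this is not a gap.

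Two organisational differences are worth noting. First, the paper carries out the blow-up and normalisation at the level of ordinary schemes $Y,Z\to X=\spec A$, applies Zariski's Main Theorem algebraically to get $\oh_Z(Z)=A$, and only then completes and invokes the theorem on formal functions to conclude $\oh_{\Zf}(\Zf)=\varprojlim H^0(Z,\oh_Z)/I^n=A$. You instead work formally throughout and appeal to a Stein factorisation at the formal level; this is equivalent, but your sentence ``isomorphism over the dense open $\spec A\setminus V(I)$'' conflates the scheme and formal scheme pictures and would need to be unpacked (e.g.\ by showing the finite $A$-algebra $C$ arising from the Stein factorisation satisfies $A[1/f_i]\cong C[1/f_i]$ and is torsion-free). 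The paper's order avoids this bookkeeping. Second, you assert the final isomorphism is one of \emph{topological} rings without justification; the paper devotes a separate paragraph to this, checking via $H^0(Y,\oh_Y(m))=I^m$ for $m\gg 0$ that the subspace topology on $A$ induced from $\oh^+_{\Xf_a}(\Xf_a)$ agrees with the $I$-adic one.
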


\begin{proof}
Without loss of generality, assume that $\Xf=\spf A$, where $A$ is an excellent normal domain. Let $I=(f_1, \dots, f_n)$ be an ideal of definition of $A$ and consider the blow-up $Y$ of $X=\spec A$ centred at $I$. This can be covered by the affine open subsets $Y_i=\spec A[f_j/f_i]/(f_i-\text{torsion})$. Consider furthermore its normalisation $Z=\tilde{Y}$, which is covered by the affine opens $Z_i=\spec B_i$, where $B_i$ is the normalisation of $A[f_j/f_i]/(f_i-\text{torsion})$ in $Q(A)$. In virtue of $A$ being excellent, $Z \rightarrow Y$ is a finite morphism. As $Z \rightarrow X$ is birational, Zariski's Main Theorem yields an isomorphism $\oh_X(X) \rightarrow \oh_Z(Z)$.
	
In the following, we denote by $\Yf$, $\Yf_i$, $\Zf$ and ${\Zf}_i$ the $I$-adic completions of $Y$, $Y_i$, $Z$ and $Z_i$, respectively. By excellency, $\Zf$ is still normal. We now look at the following diagram:
$$
\begin{tikzcd}
\oh_{\Xf}(\Xf) \arrow{d}\arrow{r}& \oh_{\Zf}(\Zf) \arrow{d}\\
\oh_{\Xf_a}^+(\Xf_a) \arrow{r}& \oh_{\Zf_a}^+(\Zf_a)
\end{tikzcd} 
$$
Let us now compute the ring on the upper right corner. The theorem on formal functions gives $\varprojlim_n H^0(Z, \oh_Z/I^n\oh_Z) \cong \varprojlim_ nH^0(Z, \oh_Z)/I^nH^0(Z, \oh_Z)$. Since $\oh_Z(Z)=A$ is already $I$-adically complete, we get $\oh_{\Zf}(\Zf)=A=\oh_{\Xf}(\Xf)$. By the previous lemma, the right arrow is also an isomorphism. Finally, admissible blow-ups induce an isomorphism of the corresponding analytic subsets (cf. \cite[(1.1.12)]{EtHuber}), so $\Yf_a \cong \Xf_a$ is already normal and thus $\Zf_a \cong \Xf_a$, which gives isomorphy of the bottom map.
	
As for the topological claim, we need to verify that the topology induced on $A$ by the ideals $H^0(\Yf, I^m\oh_{\Yf})=\varprojlim_n H^0(Y, I^m\oh_Y)/I^nH^0(Y, I^m\oh_Y)$, $m \in \NN$, is just the usual $I$-adic topology. Indeed, $I^m\oh_Y=\oh_Y(m)$ is induced by the graded module $\bigoplus_{k \geq m} I^m$, so it follows that $H^0(Y, \oh_Y(m))=I^m$ for all sufficiently large $m$, arguing as in \cite[Exercise 5.9(b)]{Harts} (which works more generally for Nagata rings, cf. \cite[Remark 5.19.2]{Harts}). 
\end{proof}

As a corollary of the previous proposition and the first Hebbarkeitssatz, we immediately retrieve a theorem of de Jong:

\begin{cor}\label{resultado do de Jong}\emph{\cite[Theorem 7.4.1]{deJong}}
Let $\Xf$ be an $\oh_K$-flat normal formal scheme, locally formally of finite type over $\oh_K$ and let $\Xf_{\eta}$ be its generic fibre. Then, the restriction map $\oh_{\Xf}(\Xf) \rightarrow \oh_{\Xf}^+(\Xf_{\eta})$ is an isomorphism of rings.
\end{cor}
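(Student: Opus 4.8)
The plan is to read this off as a composition of two already-established isomorphisms: the one of Proposition~\ref{estender para pontos nao analiticos}, extending powerbounded functions from the analytic locus to the whole formal scheme, and the one furnished by the First Hebbarkeitssatz. Both sheaves in the statement are local on $\Xf$ and the arrow is a restriction map, so I would first reduce to the affine case $\Xf=\spf A$ with $A$ an $\oh_K$-flat normal domain, formally of finite type over $\oh_K$. Such an $A$ is excellent (formally of finite type over the excellent ring $\oh_K$, as in the proof of Proposition~\ref{pseudo-rigido e Jacobson-Tate e excelente}), and $\Xf$ is nowhere discrete: $\oh_K$-flatness makes $\pi$ a topologically nilpotent nonzerodivisor, so the ideal of definition is nilpotent on no open, i.e.\ no open of $\Xf$ is discrete. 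Hence Proposition~\ref{estender para pontos nao analiticos} applies and gives an isomorphism of topological rings $\oh_{\Xf}(\Xf)\xrightarrow{\ \sim\ }\oh_{\Xf_a}^{+}(\Xf_a)$.

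For the second step I would set $X:=\Xf_a$. By Example~\ref{exemplos canonicos de algebras pseudoafinoides} and Definition~\ref{definicao dos espacos pseudo-rigidos}, $X$ is a pseudorigid space over $\oh_K$; it is normal by the argument already used for $\Yf_a$ inside the proof of Proposition~\ref{estender para pontos nao analiticos} (locally $X=\spa B$ with $B=A\langle J/f_i\rangle[1/f_i]$, and although the blow-up chart $A[J/f_i]$ need not be normal, inverting $f_i$ recovers the normal ring $A[1/f_i]$, so by excellence $B$ is normal and $\spa B$ is normal by the transfer result of Section~3). The generic fibre $\Xf_\eta$ is the open locus $\{\pi\neq 0\}$ of $X$, and its closed complement is the Zariski closed subset $Z:=(\Xf_s)_a=V(\pi\oh_X)$. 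Since $A$, hence every completed local ring $\widehat{\oh}_{X,x}$ with $x\in\mathrm{JG}(X)$, is $\oh_K$-flat, $\pi$ is there a nonzerodivisor and a non-unit exactly when $x\in Z$; Krull's Hauptidealsatz then gives $\mathrm{ht}\,(\pi\widehat{\oh}_{X,x})=1$ for such $x$, whence $\mathrm{codim}_X Z=\mathrm{ht}\,(\pi\oh_X)=1$. (Also $Z\neq X$, since $A[1/\pi]\neq 0$ by $\oh_K$-flatness.)

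Now the First Hebbarkeitssatz applies to the normal pseudorigid space $X$ and the Zariski closed subset $Z$ of codimension $\geq 1$ — and it is indeed the \emph{first}, not the second, since the codimension is exactly one — yielding $\oh_X^{+}(X)\xrightarrow{\ \sim\ }\oh_X^{+}(X\setminus Z)=\oh_X^{+}(\Xf_\eta)=\oh_{\Xf}^{+}(\Xf_\eta)$. Composing with the isomorphism from the first step, and noting that every map in sight is the evident restriction map so that the composite is the restriction $\oh_{\Xf}(\Xf)\to\oh_{\Xf}^{+}(\Xf_\eta)$, gives the claim. I do not expect a genuine obstacle here: all the substance is already carried by Proposition~\ref{estender para pontos nao analiticos} and the First Hebbarkeitssatz, and the only points demanding a little care are the hypothesis checks above — that $\Xf$ is nowhere discrete and excellent, that $\Xf_a$ is again normal, and that $Z$ has codimension exactly one.
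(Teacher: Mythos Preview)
Your proposal is correct and follows exactly the paper's approach: compose the isomorphism of Proposition~\ref{estender para pontos nao analiticos} with the First Hebbarkeitssatz applied to $X=\Xf_a$ and $Z=V(\pi\oh_X)$. The paper's own proof is the two-line version of what you wrote; your added hypothesis checks (excellence, nowhere discreteness, normality of $\Xf_a$, $\operatorname{codim}_X Z\geq 1$) are all fine, with the minor remark that your normality argument for $B=A\langle J/f_i\rangle[1/f_i]$ is really ``$A[J/f_i][1/f_i]=A[1/f_i]$ is normal and the completion map $A[J/f_i]\to A\langle J/f_i\rangle$ is regular by excellence, hence so is its localisation, and regular over normal is normal.''
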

\begin{proof}
Given a function $f \in \oh_{\Xf}^+(\Xf_{\eta})$, we first extend it to $\Xf_a$ using the first Hebbarkeitssatz and then to $\Xf$ using Proposition \ref{estender para pontos nao analiticos}.
\end{proof}

Before deriving another corollary of Proposition \ref{estender para pontos nao analiticos}, we need to discuss the specialisation mapping. 

\begin{defn}
We define the specialisation mapping as the continuous function $\text{sp}: \Xf \rightarrow \Xf_{\text{red}}$ which sends $x$ to the unique generic point of $\overline{\{x\}} \cap \Xf_{\text{red}}$.
\end{defn}

If $\Xf=\spa A$, the trivial valuation $y$ given by $\lvert a(y)\rvert=1$ if and only if $\lvert a(x) \rvert =1$ is the unique generic point among all trivial specialisations of $x$, so the specialisation mapping is well defined and it is functorial in $\Xf$. We should also remark that, when $x \in JG((\spa A)_a)$, its support on $A$ defines a valuative order by \cite[Proposition 1.11.2]{Abb}. The maximal ideal of this order defines a closed point in $A_{\text{red}}$ and, by the concrete description given above, coincides with $\text{sp}(x)$. Hence this generalises the definition of the specialisation mapping given in \cite[7.1.5]{BGR}.

\begin{cor}
Let $\mathcal{C}$ be the category of normal formal schemes, locally formally of finite type over $\oh_K$, which are nowhere discrete with adic morphisms over $\oh_K$ as morphisms of $\mathcal{C}$. Let $\mathcal{D}$ be the category of triples $(X,Y,p)$, where $X$ is a pseudorigid space over $\oh_K$, $Y$ is a $k$-scheme and $p: \lvert X\rvert \rightarrow \lvert Y\rvert$ is a map of topological spaces. If the residue field $k$ is algebraically closed, then the functor $F:\mathcal{C}\rightarrow \mathcal{D}$ given by $\Xf \mapsto (\Xf_a, \Xf_{\emph{red}}, \emph{sp}_{\Xf})$ is fully faithful. 
\end{cor}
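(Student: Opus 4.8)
The plan is to construct, for each morphism $(\Xf_a, \Xf_{\text{red}}, \text{sp}_{\Xf}) \to (\Yf_a, \Yf_{\text{red}}, \text{sp}_{\Yf})$ in $\mathcal{D}$, a unique adic morphism $\Xf \to \Yf$ over $\oh_K$ inducing it, and vice versa. Faithfulness is the easy direction: since $\Xf$ is nowhere discrete and normal, the analytic locus $\Xf_a$ is dense in $\Xf$ (the non-analytic points lie in the closure of the analytic ones, as $\Xf$ has no isolated discrete points), so an adic morphism $\Xf \to \Yf$ is determined by its restriction to $\Xf_a$; and on the topological side $\Xf_{\text{red}}$ is already determined. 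Hence $F$ is faithful.

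For fullness, I would first reduce to the affine case $\Xf = \spf A$, $\Yf = \spf B$ with $A$, $B$ excellent normal domains; here the claim becomes: a morphism of pseudorigid spaces $\Xf_a \to \Yf_a$ together with a compatible map $\Xf_{\text{red}} \to \Yf_{\text{red}}$ of schemes, compatible with the specialisation maps, comes from a unique continuous $\oh_K$-algebra homomorphism $B \to A$. The morphism $\Xf_a \to \Yf_a$ gives a continuous ring map $\oh_{\Yf}^+(\Yf_a) \to \oh_{\Xf}^+(\Xf_a)$, i.e. $\oh_{\Yf_a}^+(\Yf_a) \to \oh_{\Xf_a}^+(\Xf_a)$; by Proposition \ref{estender para pontos nao analiticos} both sides are canonically $B$ and $A$ respectively, so we obtain a continuous $\oh_K$-algebra map $\varphi: B \to A$. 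This automatically gives an adic morphism $\Xf = \spf A \to \spf B = \Yf$ (adicness: the ideal of definition $I_B$ of $B$ maps into an ideal of definition of $A$ because the induced map on analytic loci is a map of adic spaces over $\spa\oh_K$, hence respects the topologically nilpotent elements). One then checks that the induced map on analytic loci recovers the given one (both are $\spa$ of $\varphi$, using full faithfulness of $A \mapsto \spa A$ essentially) and that $\varphi$ induces the prescribed map on reductions: this is where the compatibility with specialisation maps and the hypothesis that $k$ is algebraically closed enter, since closed points of $\Xf_{\text{red}}$ are $k$-points and are the images under $\text{sp}$ of points of $\text{JG}(\Xf_a)$ (whose supports are valuative orders with maximal ideal cutting out that closed point, as recalled after the definition of $\text{sp}$), so the map $\Xf_{\text{red}} \to \Yf_{\text{red}}$ is forced on closed points, hence on all points since these are Jacobson schemes, hence agrees with the one $\varphi$ induces on $\spf A \to \spf B$ reduced.

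The main obstacle I expect is gluing: passing from the affine case to general nowhere-discrete $\Xf$ and $\Yf$ requires that the locally constructed $\oh_K$-morphisms agree on overlaps, which follows from the uniqueness already established affine-locally, but one must also ensure the affine charts can be chosen compatibly — here one uses that $\text{sp}_{\Xf}^{-1}$ of an affine open of $\Xf_{\text{red}}$ is the analytic locus of an affine open of $\Xf$, so the triple $(\Xf_a, \Xf_{\text{red}}, \text{sp}_{\Xf})$ already "knows" the affine charts of $\Xf$, and a morphism in $\mathcal{D}$ carries affine charts downstairs into affine charts, up to refinement. A second subtlety is checking that the ring map $\varphi$ produced is continuous for the $I$-adic topologies and adic, not merely a ring homomorphism; this uses the topological part of Proposition \ref{estender para pontos nao analiticos} together with the fact that the morphism $\Xf_a \to \Yf_a$ lives over $\spa\oh_K$, so it sends $\oh^{+}$ to $\oh^{+}$ and topologically nilpotents to topologically nilpotents, and one deduces that $\varphi^{-1}$ of an ideal of definition is open, hence contains a power of $I_B \cdot A$, giving adicness after possibly enlarging nothing since $A$ is already complete.
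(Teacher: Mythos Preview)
Your proposal is correct and follows essentially the same route as the paper: reduce to the affine case using that affine opens of $\Xf$ correspond to affine opens of $\Xf_{\text{red}}$ and that $\text{sp}^{-1}$ of such an open is the analytic locus of the corresponding formal affine; then invoke Proposition \ref{estender para pontos nao analiticos} to identify $A\cong\oh^+_{\Xf_a}(\Xf_a)$ and recover the ring map; finally use surjectivity of $\text{sp}$ onto closed points together with $k=\bar k$ and Jacobson--reducedness to force $\psi=\phi_{\text{red}}$. One small wording issue: your faithfulness argument via ``$\Xf_a$ dense in $\Xf$'' is not quite enough on its own, since density of a subspace does not by itself pin down a morphism of locally ringed spaces --- the actual reason two adic morphisms agreeing on $\Xf_a$ must coincide is again Proposition \ref{estender para pontos nao analiticos} (the sheaf map on the affine $\spf A$ is the ring map $B\to A$, and that is recovered from $\oh^+$ over the analytic locus), which is exactly how the paper argues it.
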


\begin{proof}
Let $\Xf, \Yf \in \text{Obj} X$ and $f,g: \Yf \rightarrow \Xf$ be adic morphisms such that $F(f)=F(g)$. In particular, the topological maps $\lvert f \rvert$ and $\lvert g \rvert$ coincide. Hence, we may assume that $\Yf=\spf B$, $\Xf=\spf A$ and denote by $\phi_f, \phi_g: A \rightarrow B$ the homomorphisms inducing $f$ and $g$, respectively. By Proposition \ref{estender para pontos nao analiticos}, we deduce that $\phi_f=\phi_g$ and thus $f=g$.
	
Consider now a morphism of triples $(\varphi, \psi):(\Yf_a, \Yf_{\text{red}}, \text{sp}_{\Yf}) \rightarrow (\Xf_a, \Xf_{\text{red}}, \text{sp}_{\Xf})$. Let $\spf A \subseteq \Xf$ be an affine open subset and cover $\psi^{-1}(\spec A_{\text{red}})=\bigcup_{i=1}^n \spec (B_i)_{\text{red}}$, where $\spf B_i \subseteq \Yf$ are affine open subsets. From the definition of the specialisation map, it follows that $\varphi^{-1}((\spf A)_a)=\bigcup_{i=1}^n (\spf B_i)_a$. Since $F$ is faithful, we are reduced to the case where $\Yf=\spf B$ and $\Xf=\spf A$.
	
Now Proposition \ref{estender para pontos nao analiticos} gives us a continuous homomorphism $\phi: A \cong \oh_{\Xf}^+(\Xf_a) \rightarrow \oh_{\Yf}^+(\Yf_a) \cong B$, inducing $\varphi$ in the analytic loci, thus, in particular, $\varphi$ is adic (cf. \cite[Lemma 7.46 (2)]{Wed}). Now, it suffices to show that $\psi$ is equal to $\phi_{\text{red}}$. As the specialisation mapping surjects onto closed points (cf. \cite[Proposition 1.11.10]{Abb}), we get an equality of topological maps $\lvert\phi_{\text{red}}\rvert=\lvert\psi\rvert$. Noticing that $A_{\text{red}}$ and $B_{\text{red}}$ are reduced and finitely generated over an algebraically closed field $k$, we deduce the identity $\phi_{\text{red}}=\psi$. 
\end{proof}

If we apply instead Corollary \ref{resultado do de Jong}, we can obtain a similar result with $\Xf_a$ replaced by the generic fibre $\Xf_{\eta}$. Since the former is a rigid space, we may restrict the specialisation map to classical points and furthermore, we can replace the reduced scheme $\Xf_{\text{red}}$ by its perfection, as these are canonically homeomorphic.

\begin{cor}
\emph{(Scholze)} Let $\mathcal{C}$ be the category of $\oh_K$-flat normal formal schemes, locally formally of finite type over $\oh_K$. Let $\mathcal{D}$ be the category of triples $(X,Y,p)$, where $X$ is a $K$-rigid space (here regarded as in the classical sense), $Y$ is a perfect $k$-scheme and $p: \lvert X \rvert\rightarrow \lvert Y\rvert$ is a map of topological spaces. If the residue field $k$ is algebraically closed, then the functor $F:\mathcal{C}\rightarrow \mathcal{D}$ given by $\Xf \mapsto (\Xf_{\eta}, (\Xf_{\emph{red}})^{\emph{perf}}, \emph{sp}_{\Xf})$ is fully faithful. 
\end{cor}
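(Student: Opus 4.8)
The plan is to run the proof of the previous corollary almost verbatim, with Corollary \ref{resultado do de Jong} taking over the role there played by Proposition \ref{estender para pontos nao analiticos}. The hypothesis that the members of $\mathcal{C}$ are $\oh_K$-flat, normal, and locally formally of finite type is precisely what makes Corollary \ref{resultado do de Jong} applicable to each affine open $\spf A \subseteq \Xf$, yielding a functorial identification $A = \oh_\Xf(\Xf) \xrightarrow{\ \sim\ } \oh_\Xf^+(\Xf_\eta)$; apart from this one needs only that $(\Xf_{\text{red}})^{\text{perf}} \to \Xf_{\text{red}}$ is a homeomorphism inducing isomorphisms on residue fields, and that a morphism between perfections of reduced finite type $k$-schemes over the algebraically closed field $k$ is determined by its effect on closed points (all residue fields being $k$, with no nontrivial automorphisms).

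For faithfulness, take adic $\oh_K$-morphisms $f, g \colon \Yf \to \Xf$ with $F(f) = F(g)$. Then $f_\eta = g_\eta$ and $(f_{\text{red}})^{\text{perf}} = (g_{\text{red}})^{\text{perf}}$, so $|f| = |g|$ on $|\Yf| = |\Yf_{\text{red}}|$ and we may assume $\Xf = \spf A$, $\Yf = \spf B$ with $A, B$ as above; writing $\phi_f, \phi_g \colon A \to B$ for the inducing homomorphisms, functoriality of the restriction maps $\oh_\Xf(\Xf) \to \oh_\Xf^+(\Xf_\eta)$ and $\oh_\Yf(\Yf) \to \oh_\Yf^+(\Yf_\eta)$, which are isomorphisms by Corollary \ref{resultado do de Jong}, shows that $f_\eta$ determines $\phi_f$. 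Hence $\phi_f = \phi_g$ and $f = g$.

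For fullness, let $(\varphi, \psi) \colon F(\Yf) \to F(\Xf)$ be a morphism of triples, so that $\varphi \colon \Yf_\eta \to \Xf_\eta$ is a morphism of rigid spaces, $\psi \colon (\Yf_{\text{red}})^{\text{perf}} \to (\Xf_{\text{red}})^{\text{perf}}$ a morphism of perfect $k$-schemes, and $\text{sp}_\Xf \circ |\varphi| = |\psi| \circ \text{sp}_\Yf$ on classical points. Cover $\Xf$ by affine opens $\spf A_\alpha$; since $(\Xf_{\text{red}})^{\text{perf}}$, $\Xf_{\text{red}}$ and $\Xf$ have the same underlying space, the preimages under $\psi$ of the corresponding opens are open subspaces of $\Yf$, say $\mathfrak{W}_\alpha \subseteq \Yf$, and using $\text{sp}_\Xf^{-1}(\spec (A_\alpha)_{\text{red}}) = (\spf A_\alpha)_a$, the specialisation compatibility, and the fact that an admissible open of a rigid space is determined by its classical points, one checks $\varphi^{-1}((\spf A_\alpha)_\eta) = (\mathfrak{W}_\alpha)_\eta$. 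Covering each $\mathfrak{W}_\alpha$ by affine opens and invoking faithfulness to glue, we reduce to $\Xf = \spf A$, $\Yf = \spf B$. Then $\varphi$ induces $\varphi^\ast \colon \oh_\Xf^+(\Xf_\eta) \to \oh_\Yf^+(\Yf_\eta)$, which via Corollary \ref{resultado do de Jong} is a continuous $\oh_K$-homomorphism $\phi \colon A \to B$; put $h := \spf \phi \colon \Yf \to \Xf$. To see that $h_\eta = \varphi$, write $\Xf_\eta$ as the increasing union of the affinoids $\spa(A\langle I^n/\pi\rangle[1/\pi])$ for an ideal of definition $I \subseteq A$ containing $\pi$; since $\Yf_\eta$ is quasicompact, $\varphi$ factors through one such affinoid, and since $\Yf$ is $\oh_K$-flat and normal the ring $\oh_\Yf(\Yf_\eta)$ is $\pi$-torsion free, so the resulting map of affinoid algebras --- hence $\varphi$ --- is determined by its restriction to $A$, i.e.\ by $\varphi^\ast$; the same holds for $h_\eta$, so $h_\eta = \varphi$.

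It remains to see $(h_{\text{red}})^{\text{perf}} = \psi$. By functoriality of the specialisation map, $h_{\text{red}} \circ \text{sp}_\Yf = \text{sp}_\Xf \circ h_\eta = \text{sp}_\Xf \circ \varphi = \psi \circ \text{sp}_\Yf$ on classical points, and $\text{sp}_\Yf$ carries the classical points of $\Yf_\eta$ onto the closed points of $\Yf_{\text{red}}$ (by the discussion preceding the previous corollary, via \cite[Propositions 1.11.2 and 1.11.10]{Abb}); hence $(h_{\text{red}})^{\text{perf}}$ and $\psi$ agree on closed points, so they coincide by the second fact recalled at the outset. Thus $F(h) = (\varphi, \psi)$, proving fullness. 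The one step I expect to be delicate is the identity $h_\eta = \varphi$ --- recovering the morphism of generic fibres from the bounded global sections --- which is the exact analogue of the point in the previous corollary where a homomorphism on sections is said to ``induce $\varphi$'', and which rests on the universal property of the Raynaud generic fibre of an affine formal scheme together with $\pi$-torsion-freeness of the global functions of the reduced, $\oh_K$-flat source.
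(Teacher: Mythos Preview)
Your approach is exactly the paper's: reduce to the affine case and replace Proposition~\ref{estender para pontos nao analiticos} by Corollary~\ref{resultado do de Jong}, then re-run the specialisation argument for the reduced/perfected locus. The paper's own proof is a three-line sketch to this effect, adding only the remark that $\Xf_\eta$ is Zariski open in $\Xf_a$ (which is what justifies carrying the surjectivity of $\text{sp}$ onto closed points over from $\Xf_a$ to $\Xf_\eta$ in the $\oh_K$-flat case).

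There is one concrete slip in your write-up. In the step establishing $h_\eta=\varphi$ you assert that $\Yf_\eta$ is quasicompact and hence $\varphi$ factors through a single affinoid $\spa(A\langle I^n/\pi\rangle[1/\pi])$. This is false in general: for $\Yf=\spf B$ with $B$ formally of finite type over $\oh_K$, the Raynaud generic fibre $\Yf_\eta$ is typically an increasing union of affinoids and is not quasicompact (e.g.\ $B=\oh_K[[T]]$ gives the open unit disc). The fix is immediate: cover $\Yf_\eta$ by the affinoids $V_m=\spa(B\langle J^m/\pi\rangle[1/\pi])$; each $V_m$ is quasicompact, so $\varphi|_{V_m}$ lands in some $\spa(A\langle I^{n(m)}/\pi\rangle[1/\pi])$, and on that affinoid pair the map is determined by the induced homomorphism $A\to B\langle J^m/\pi\rangle[1/\pi]$, which factors through $\phi$ by construction. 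Since $h_\eta|_{V_m}$ is determined the same way, $h_\eta=\varphi$ on each $V_m$ and hence globally. A minor related point: when you invoke surjectivity of $\text{sp}_{\Yf}$ from classical points of $\Yf_\eta$ onto closed points of $\Yf_{\text{red}}$, the cited propositions of \cite{Abb} give this for $\Yf_a$; to pass to $\Yf_\eta$ you need the $\oh_K$-flatness hypothesis (this is the content of the paper's remark that $\Xf_\eta\subset\Xf_a$ is Zariski open, hence dense).
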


\begin{proof}
Just as in the previous corollary, we reduce to the affine case $\Xf=\spa A$ and $\Yf=\spa B$. Then full faithfulness is a consequence of de Jong's theorem as $A \cong H^0(\Xf_{\eta}, \oh_{\Xf}) \rightarrow H^0(\Yf_{\eta}, \oh_{\Yf}) \cong B$ allows us to either recover or construct the map. In order to check the agreement over the reduced locus, we argue as in the previous corollary, noting that $\Xf_{\eta}$ is a Zariski open subset of $\Xf_a$.
\end{proof}

\end{document}